\documentclass[preprint,3p]{elsarticle}

\journal{Journal of Computational Physics}
\makeatletter
\AddToHook{cmd/ps@pprintTitle/after}{%
  \g@addto@macro\@oddfoot{\unskip}%
  \let\@evenfoot\@oddfoot
}
\makeatother

\usepackage{amsmath}
\usepackage{amssymb}
\usepackage{amsthm}
\usepackage{mathtools} 
\usepackage{mathrsfs}  
\usepackage{bm}        

\usepackage{booktabs}
\usepackage{caption}
\usepackage{subcaption}
\usepackage{acronym}
\usepackage[section]{placeins}

\usepackage[
    colorlinks=true,
    linkcolor=blue,
    citecolor=blue,
    urlcolor=blue,
    pdftitle={A Discontinuous Galerkin Method for the Intrinsic Beam Model with Kelvin--Voigt Damping},
    pdfauthor={Shivam Sundriyal, Christian Bleffert, Lukas Dreyer, Melven Roehrig-Zoellner, Gregor Gassner, Vadym Aizinger}
]{hyperref}

\newtheoremstyle{jcpstyle}
  {5pt plus 1pt minus 1pt} 
  {5pt plus 1pt minus 1pt} 
  {\itshape}               
  {}                       
  {\bfseries}              
  {}                       
  {.5em}                   
  {}                       

\theoremstyle{jcpstyle}
\newcounter{globaltheorem}[section]
\renewcommand{\theglobaltheorem}{\thesection.\arabic{globaltheorem}}

\newtheorem{theorem}[globaltheorem]{Theorem}

\newtheorem{corollary}[globaltheorem]{Corollary}

\newtheorem{problem}[globaltheorem]{Problem}
\newtheorem{remark}[globaltheorem]{Remark}

\newcommand{\llbracket}{\left[\!\left[}
\newcommand{\rrbracket}{\right]\!\right]}
\newcommand{\jump}[1]{\llbracket #1 \rrbracket}
\newcommand{\avg}[1]{\ensuremath{\bigl\{\!\!\bigl\{ #1 \bigr\}\!\!\bigr\}}}

\newcommand{\inproduct}[3][\Omega]{\left( #2 , #3 \right)_{#1}}
\newcommand{\engnorm}[1]{\left\| #1 \right\|_{\bm{\Gamma}} }
\newcommand{\dengnorm}[1]{\left\| #1 \right\|_{\bm{\Gamma},k} }

\newcommand{\bndeval}[2]{\bigl[#1^{\top} #2\bigr]_{0}^{\ell}}
\newcommand{\bndevallong}[2]{\bigl[(#1)^{\top} #2\bigr]_{0}^{\ell}}
\newcommand{\externalboundary}[2]{\left[ \big(#1\big)^{\top} #2 \right]_0^\ell}
\newcommand{\externalboundarysingle}[2]{\left[ #1^{\top} #2 \right]_0^\ell}
\newcommand{\interfacepoint}[3][x_{i+1/2}]{\left( \big(#2\big)^{\top} #3 \right)\Big|_{#1}}
\newcommand{\interfacepointsingle}[3][x_{i+1/2}]{\left( \big(#2\big)^{\top} #3 \right)\Big|_{#1}}

\newcommand{\Amat}{\bm{A}}
\newcommand{\Bmat}{\bm{B}}
\newcommand{\dampmat}{\bm{C}_\tau}
\newcommand{\flexmat}{\bm{C}}
\newcommand{\Emat}{\bm{E}}
\newcommand{\Gammamat}{\bm{\Gamma}}
\newcommand{\Hmat}{\bm{H}}
\newcommand{\massmat}{\bm{M}}
\newcommand{\Pimat}{\bm{\Pi}}
\newcommand{\Psimat}{\bm{\Psi}}
\newcommand{\qext}{\bm{q}_\text{ext}}
\newcommand{\rtau}{\bm r_\tau}
\newcommand{\rtauex}{\bm r_{\tau,\mathrm{ex}}}
\newcommand{\rtauh}{\bm r_{\tau h}}

\newcommand{\uh}{\bm u_h}
\newcommand{\uone}{\bm{u}_1}
\newcommand{\uoneh}{\bm u_{1h}}
\newcommand{\utwo}{\bm{u}_2}
\newcommand{\utwoh}{\bm u_{2h}}
\newcommand{\zerovec}[1]{\mathbf{0}_{#1}}
\newcommand{\zeromat}[2]{\mathbf{0}_{#1,#2}}

\newcommand{\linformA}[2]{\bm{\mathcal{A}}\bigl(#1, #2\bigr)}
\newcommand{\linformB}[2]{\bm{\mathcal{B}}\bigl(#1, #2\bigr)}
\newcommand{\linformC}[2]{\bm{\mathcal{C}}\bigl(#1, #2\bigr)}
\newcommand{\dlinformA}[2]{\bm{\mathcal{A}}_h\left(#1, #2\right)}
\newcommand{\dlinformB}[2]{\bm{\mathcal{B}}_h\left(#1, #2\right)}
\newcommand{\dlinformC}[2]{\bm{\mathcal{C}}_h\left(#1, #2\right)}

\newcommand{\R}{\mathbb{R}}
\newcommand{\Lone}{\mathcal{L}_1}
\newcommand{\Ltwo}{\mathcal{L}_2}
\newcommand{\idmat}[2]{\mathbf{I}_{#1, #2}}

\newcommand{\ibvp}{IBVP}
\newcommand{\trixi}{\texttt{Trixi.jl}}
\begin{document}

\hypersetup{pageanchor=false}
\begin{frontmatter}

\title{A Discontinuous Galerkin Method for the Intrinsic Beam Model with Kelvin--Voigt Damping}

\author[bayreuth]{Shivam Sundriyal\corref{cor1}}
\ead{Shivam.Sundriyal@uni-bayreuth.de}
\ead[url]{https://orcid.org/0009-0004-3959-1976}
\cortext[cor1]{Corresponding author}
\author[dlr]{Christian Bleffert}
\author[hannover]{Lukas Dreyer}
\author[dlr]{Melven Röhrig-Zöllner}
\author[cologne]{Gregor Gassner}
\author[bayreuth]{Vadym Aizinger}

\affiliation[bayreuth]{organization={Chair of Scientific Computing, University of Bayreuth},
            addressline={Universitätsstraße 30}, 
            city={Bayreuth},
            postcode={95447}, 
            country={Germany}}

\affiliation[dlr]{organization={German Aerospace Center (DLR), Institute of Software Technology},
            addressline={Linder Höhe}, 
            city={Cologne},
            postcode={51147}, 
            country={Germany}}

\affiliation[hannover]{organization={Leibniz University Hannover},
            addressline={Welfengarten 1}, 
            city={Hannover},
            postcode={30167}, 
            country={Germany}}

\affiliation[cologne]{organization={University of Cologne},
            addressline={Albertus-Magnus-Platz}, 
            city={Cologne},
            postcode={50923}, 
            country={Germany}}

\begin{abstract}
We present an energy-stable discontinuous Galerkin (DG) spatial discretization of the intrinsic beam equations with Kelvin--Voigt damping. We reformulate the governing equations by splitting the total sectional resultants into elastic and viscous parts, thereby exposing a mixed system without mixed space--time derivatives. For homogeneous cantilever boundary data, the continuous system obeys an energy dissipation identity. With characteristic upwinding and alternating auxiliary traces the DG discretization is energy stable at the semidiscrete level for the stated homogeneous split boundary closure. The method is implemented in the Julia-based simulation framework \trixi. A physically compatible manufactured solution with all state and viscous components active assesses convergence, while a rotating-beam test verifies the analytic constant-spin branch and the actuated energy balance.
\end{abstract}

\begin{highlights}
\item Energy-stable DG scheme for Kelvin--Voigt damped intrinsic beams
\item Auxiliary variable removes mixed space--time derivatives
\item Semi-discrete energy estimate proved for homogeneous split boundary data
\item A compatible manufactured solution assesses high-order convergence
\item A rotating-beam test verifies the steady branch and actuated energy balance
\end{highlights}

\begin{keyword}
Discontinuous Galerkin \sep Kelvin--Voigt Damping \sep Geometrically Exact Beam Theory \sep Rotating beams
\end{keyword}

\end{frontmatter}
\hypersetup{pageanchor=true}

\section{Introduction}
\label{sec:introduction}

Highly flexible helicopter and wind-turbine blades can undergo large deflections,
rotations, and vibrations under complex dynamic loads. Fully three-dimensional
finite element models are often very expensive for repeated system-level
simulations, hence motivating representation of the blades as one-dimensional beams. The
classical linear Euler--Bernoulli~\cite{Truesdell1960,Timoshenko1953} and
Timoshenko models~\cite{Timoshenko1921} do not describe the large motions of
such structures.

To address these limitations, the geometrically exact intrinsic beam theory formulated by
Hodges~\cite{Hodges1990,Hodges2003} accommodates large deformation, twist, and
anisotropic material behavior. The term ``intrinsic'' refers to the use of
strain, curvature, velocity, and sectional-resultant variables rather than
displacement and rotation variables as primary unknowns. This avoids the
additional nonlinearities associated with displacement--rotation formulations
while retaining the geometric nonlinearity required to describe large motions.
In the remainder of this paper, we refer to this framework as the \emph{intrinsic
beam model} or the \emph{intrinsic beam equations}.

Accurate representation of damping is important for predicting vibration,
fatigue, and aeroelastic response. Physical damping may arise from internal
material mechanisms, joints and interfaces, and aerodynamic interactions
\cite{Lazan1968,BanksInman1991,Leishman2006,Johnson2013}. Numerical damping
introduced by a time integrator serves a different purpose and is not a
constitutive model of these mechanisms. In this work, we consider rate-dependent
internal material damping described by a generalized Kelvin--Voigt law. Artola
et al.~\cite{Artola2021} incorporated this law into a
geometrically exact intrinsic beam model and analyzed the resulting
dissipation and equilibria.

Although the continuous damped model has been formulated and analyzed, its
numerical solution remains challenging. Numerical treatments of the intrinsic
equations include an energy-consistent modal scheme for the undamped
equations~\cite{PatilAlthoff2011}, variable-order finite elements for the
nonlinear fully intrinsic equations~\cite{PatilHodges2011}, and nonlinear
normal-mode expansions~\cite{Palacios2011}. The Kelvin--Voigt term introduces
mixed space--time derivatives and a mixed hyperbolic--parabolic character;
structurally, the damped system is a nonlinear, twelve-component analogue of
the strongly damped wave equation, whose finite element error analysis is
classical in the linear scalar case~\cite{LarssonThomeeWahlbin1991}. DG
treatments of related linear scalar models include an LDG method with
alternating fluxes for diffusive--viscous wave equations~\cite{LingShuYan2023}
and an HDG method for strongly damped wave problems~\cite{GuptaYadav2025}.
Those analyses do not address the nonlinear twelve-component intrinsic beam
structure, the constitutive damping-resultant split, or the characteristic beam
boundary closure considered here. DG methods are natural candidates because of
their high-order accuracy and element locality~\cite{CockburnKarniadakisShu2000}.
For the \emph{undamped} intrinsic beam equations, an energy-stable DG scheme was
constructed in~\cite{Bleffert2025}, and boundary feedback stabilization was
established at the continuous level
in~\cite{RodriguezLeugering2020,Rodriguez2022}; to the authors' knowledge,
however, no DG method with a semi-discrete energy analysis has been proposed
for the Kelvin--Voigt damped intrinsic beam model. Thus, the novelty claimed
here is the damping-resultant split and its matching DG energy identity, not
the first high-order weak discretization of intrinsic beam equations.

In this paper, we develop an energy-stable discontinuous Galerkin treatment of the damped intrinsic beam model.
Our first contribution is a reformulation of the governing equations that removes the mixed space--time derivatives introduced by the Kelvin--Voigt law. We introduce an auxiliary variable representing the rate-dependent contribution to the sectional force and moment resultants. The resulting first-order-in-time mixed system separates the hyperbolic intrinsic-beam structure from the damping contribution and is therefore more amenable to energy analysis and DG discretization. This structure allows us to extend energy-stable DG ideas from the undamped intrinsic beam equations~\cite{Bleffert2025} to the damped setting.

Our second contribution is an energy analysis of the reformulated continuous
problem. Assuming that the sectional mass and flexibility matrices as well as the
effective Kelvin--Voigt operator are uniformly symmetric positive definite, we
derive a mechanical energy--dissipation identity that retains the boundary-work
term. The mechanical energy is the sum of the kinetic and elastic energies defined in
Eq.~\eqref{eq:physical_energy}. For homogeneous cantilever boundary data and
square-integrable external loads, the identity yields an \textit{a priori} stability
bound.

Our third contribution is a DG spatial discretization that mimics the continuous
energy structure. We reuse key ideas from the undamped case~\cite{Bleffert2025}
for the hyperbolic part. Characteristic upwinding supplies non-negative jump and
boundary dissipation, while compatible auxiliary traces like alternating
LDG and central BR1 traces cancel the remaining interior coupling. Under the
stated homogeneous split cantilever closure, the resulting method is energy
stable at the semidiscrete level. The method is implemented in the Julia-based
simulation framework \trixi, and the nodal formulation used in the numerical
experiments is detailed in~\ref{app:dgsem}. The experiments assess
convergence and the influence of Kelvin--Voigt damping on the solution behavior.

The remainder of this paper is organized as follows. Section~\ref{sec:damped_intrinsic_beam_equations} presents the governing equations of the damped intrinsic beam model and their reformulation for DG discretization. Section~\ref{sec:continuous_analysis} derives the continuous boundary-work identity and the stability bound for homogeneous cantilever data. Section~\ref{sec:discretization} details the DG formulation, including numerical flux construction and a semidiscrete stability proof. Section~\ref{sec:numerical_analysis} presents numerical experiments assessing convergence and dissipative behavior. Finally, Section~\ref{sec:conclusion} summarizes our findings, and Section~\ref{sec:future_work} outlines directions for future work.

\section{Damped Intrinsic Beam Equations}
\label{sec:damped_intrinsic_beam_equations}

The intrinsic beam model was first formulated by Hodges~\cite{Hodges1990}, with the fully intrinsic form developed in~\cite{Hodges2003}.
It models the dynamics of initially curved and twisted beams of anisotropic material.
The underlying beam is idealized by a one-dimensional reference line (the elastic axis).
It is geometrically exact, meaning it incorporates nonlinearities that enable the representation of large motions, including significant displacements of the reference line and substantial deformations of the beam's cross-sections.

Artola et al.~\cite{Artola2021} incorporated Kelvin--Voigt damping into this framework by augmenting the constitutive laws used to close the model in~\cite{Hodges2003} with strain-rate dependent terms.
In the following, we first summarize the intrinsic beam model and present the constitutive laws closing the formulation and incorporating the Kelvin--Voigt damping.
We adapt the notation established in~\cite{Palacios2011}.

Let $x\in[0,\ell]$ denote the arc-length coordinate along the undeformed beam axis, where $\ell\in\R_+$ is the beam length, and let $t\in\R_+$ denote time. Let $\bm{f}=\bm{f}(x,t)\in\R^3$ and $\bm{m}=\bm{m}(x,t)\in\R^3$ be the force and moment sectional resultants, and let $\bm{f}_e=\bm{f}_e(x,t)\in\R^6$ be the vector of distributed external forces and moments per unit length. Moreover, let $\bm{p}=\bm{p}(x,t)\in\R^3$ and $\bm{h}=\bm{h}(x,t)\in\R^3$ denote the generalized linear and angular momenta, let $\bm{v}=\bm{v}(x,t)\in\R^3$ and $\bm{\omega}=\bm{\omega}(x,t)\in\R^3$ denote the linear and angular inertial velocities, and let $\bm{\gamma}=\bm{\gamma}(x,t)\in\R^3$ and $\bm{\kappa}=\bm{\kappa}(x,t)\in\R^3$ denote the generalized strains and curvatures. Then, the governing equations of the intrinsic beam model read
\begin{align}
\label{eq:intrinsic_beam_original_1}
    \begin{bmatrix}\dot{\bm{p}}\\ \dot{\bm{h}}\end{bmatrix}
    -
    \begin{bmatrix}\bm{f}'\\ \bm{m}'\end{bmatrix}
    -\Emat\begin{bmatrix}\bm{f}\\ \bm{m}\end{bmatrix}
    +\Lone\!\left(\begin{bmatrix}\bm{v}\\ \bm{\omega}\end{bmatrix}\right)\begin{bmatrix}\bm{p}\\ \bm{h}\end{bmatrix}
    +\Ltwo\!\left(\begin{bmatrix}\bm{f}\\ \bm{m}\end{bmatrix}\right)\begin{bmatrix}\bm{\gamma}\\ \bm{\kappa}\end{bmatrix}
    =\bm{f}_e,
    \\[1.5ex]
\label{eq:intrinsic_beam_original_2}
    \begin{bmatrix}\dot{\bm{\gamma}}\\ \dot{\bm{\kappa}}\end{bmatrix}
    -
    \begin{bmatrix}\bm{v}'\\ \bm{\omega}'\end{bmatrix}
    +\Emat^\top\begin{bmatrix}\bm{v}\\ \bm{\omega}\end{bmatrix}
    -\Lone^\top\!\left(\begin{bmatrix}\bm{v}\\ \bm{\omega}\end{bmatrix}\right)\begin{bmatrix}\bm{\gamma}\\ \bm{\kappa}\end{bmatrix}
    =\zerovec{6}.
\end{align}
Partial derivatives with respect to space and time are denoted by primes \((\cdot)'\) and dots \(\dot{(\cdot)}\), respectively.

The matrix $\Emat=\Emat(x)$ is defined by
\begin{equation}
    \Emat :=
    \begin{bmatrix}
        \tilde{\bm{\kappa}_0} & \zeromat{3}{3} \\
        \tilde{\bm{e}_1}      & \tilde{\bm{\kappa}_0}
    \end{bmatrix},
\end{equation}
where $\bm{\kappa}_0=\bm{\kappa}_0(x)\in\R^3$ denotes the initial curvature and twist of the beam and
$\bm{e}_1=[1,0,0]^\top\in\R^3$ is the first Cartesian basis vector. The operator
$\tilde{(\cdot)}:\R^3\to\R^{3\times 3}$ maps $\bm{a}=(a_1,a_2,a_3)^\top$ to the skew-symmetric matrix
\begin{equation}
    \tilde{\bm{a}}=
    \begin{bmatrix}
        0   &-a_3   &a_2  \\
        a_3 &0      &-a_1 \\
        -a_2&a_1    &0
    \end{bmatrix},
    \qquad
    \tilde{\bm{a}}\bm{b}=\bm{a}\times\bm{b}\quad \text{for all } \bm{b}\in\R^3.
\end{equation}

The linear operators $\Lone,\Ltwo:\R^6\to\R^{6\times 6}$ simplify the formulation of the governing equations and are defined, for $\bm{c}_1,\bm{c}_2\in\R^3$, by
\begin{align}
\Lone\!\left(\begin{bmatrix}\bm{c}_1\\ \bm{c}_2\end{bmatrix}\right)
&=
\begin{bmatrix}
\tilde{\bm{c}_2} &\zeromat{3}{3}\\
\tilde{\bm{c}_1} &\tilde{\bm{c}_2}
\end{bmatrix},
\qquad
\Ltwo\!\left(\begin{bmatrix}\bm{c}_1\\ \bm{c}_2\end{bmatrix}\right)
=
\begin{bmatrix}
\zeromat{3}{3} &\tilde{\bm{c}_1}\\
\tilde{\bm{c}_1} &\tilde{\bm{c}_2}
\end{bmatrix}.
\end{align}
They satisfy
\begin{align}
\label{eq:l_operator_properties_1}
\Lone^\top\!\left(\begin{bmatrix}\bm{c}_1\\ \bm{c}_2\end{bmatrix}\right)
\begin{bmatrix}\bm{c}_1\\ \bm{c}_2\end{bmatrix}
&=\zerovec{6},
\\[0.5ex]
\label{eq:l_operator_properties_2}
\Lone\!\left(\begin{bmatrix}\bm{b}_1\\ \bm{b}_2\end{bmatrix}\right)
\begin{bmatrix}\bm{c}_1\\ \bm{c}_2\end{bmatrix}
&=
\Ltwo^\top\!\left(\begin{bmatrix}\bm{c}_1\\ \bm{c}_2\end{bmatrix}\right)
\begin{bmatrix}\bm{b}_1\\ \bm{b}_2\end{bmatrix}
\end{align}
which follow from skew-symmetry of the cross-product matrix $\tilde{\bm{a}}^\top=-\tilde{\bm{a}}$ and $\tilde{\bm{c}}_1\bm{c}_2=-\tilde{\bm{c}}_2\bm{c}_1$.

Equations~\eqref{eq:intrinsic_beam_original_1}--\eqref{eq:intrinsic_beam_original_2} form two coupled $6$-component PDEs (equivalently, four $3$-component PDEs) for the eight vector-valued unknowns
$\bm{f},\bm{m},\bm{v},\bm{\omega},\bm{\gamma},\bm{\kappa},\bm{p},\bm{h}$
($12$ scalar equations for $24$ scalar unknowns).
The system is closed by the following two constitutive laws ($12$ additional scalar relations): one relating sectional forces and moments to strains and curvatures, and one relating sectional velocities to generalized momenta
\begin{align}
\label{eq:constitutive_law_forces}
    \begin{bmatrix}
        \bm{f} \\ \bm{m}
    \end{bmatrix}
    &=
    \flexmat^{-1}
    \begin{bmatrix}
        \bm{\gamma} \\ \bm{\kappa}
    \end{bmatrix}
    +
    \dampmat\flexmat^{-1}
    \begin{bmatrix}
        \dot{\bm{\gamma}} \\ \dot{\bm{\kappa}}
    \end{bmatrix},
    \\[1.5ex]
\label{eq:constitutive_law_velocities}
    \begin{bmatrix}
        \bm{v} \\ \bm{\omega}
    \end{bmatrix}
    &=
    \massmat^{-1}
    \begin{bmatrix}
        \bm{p} \\ \bm{h}
    \end{bmatrix}.
\end{align}
Here, $\flexmat=\flexmat(x)\in\R^{6\times 6}$ is the flexibility matrix and
$\massmat=\massmat(x)\in\R^{6\times 6}$ is the mass matrix, determined by the
material properties and cross-sectional geometry. We restrict attention to the
nondegenerate setting in which both matrices are uniformly symmetric positive
definite. For $\flexmat$, this ensures that the inverse used
in~\eqref{eq:constitutive_law_forces} exists and that the elastic quadratic form
induces a norm; singular compliance matrices describe constrained idealizations,
such as perfectly inextensible or unshearable limits, which are not considered
here. The matrix $\dampmat=\dampmat(x)$ is the Kelvin--Voigt
damping-coefficient matrix. We impose no separate symmetry assumption on
$\dampmat$; instead, the symmetry and positivity assumption is placed on the
effective Kelvin--Voigt operator $\dampmat\flexmat^{-1}$, as stated precisely
below. Setting $\dampmat=\zeromat{6}{6}$ recovers the undamped original
equations~\cite{Hodges2003}.
We assume $\massmat$, $\flexmat$, and $\dampmat$ are sufficiently smooth functions of $x$.
The common block structures of $\massmat$ and $\flexmat$, together with the
assumption on the effective damping operator, are summarized in \ref{app:matrices}
(see also~\cite{Rodriguez2022,Hodges2003}).

For compactness, we introduce the primary variables $\uone,\utwo:[0,\ell]\times\R_+\to\R^6$,
\begin{equation}
    \uone := \begin{bmatrix}\bm{v}\\ \bm{\omega}\end{bmatrix},
    \qquad
    \utwo := \flexmat^{-1}\begin{bmatrix}\bm{\gamma}\\ \bm{\kappa}\end{bmatrix}.
\end{equation}
Following~\cite{Artola2021}, substituting the constitutive laws~\eqref{eq:constitutive_law_forces}--\eqref{eq:constitutive_law_velocities}
into the intrinsic beam equations~\eqref{eq:intrinsic_beam_original_1}--\eqref{eq:intrinsic_beam_original_2} yields the damped intrinsic beam system
\begin{align}
\label{eq:damped_equation_1}
    \massmat\dot{\uone}
    -(\utwo+\dampmat\dot{\utwo})'
    -\Emat(\utwo+\dampmat\dot{\utwo})
    +\Lone(\uone)\massmat\uone
    +\Ltwo(\utwo+\dampmat\dot{\utwo})\,\flexmat\utwo
    &= \bm{f}_e,
    \\[1.5ex]
\label{eq:damped_equation_2}
    \flexmat\dot{\utwo}
    -\uone'
    +\Emat^\top\uone
    -\Lone^\top(\uone)\,\flexmat\utwo
    &= \zerovec{6}.
\end{align}
The Kelvin--Voigt term introduces mixed space--time derivatives through
\(
(\dampmat\dot{\utwo})',
\)
so that~\eqref{eq:damped_equation_1}--\eqref{eq:damped_equation_2} is second order in the mixed sense (first order in time, but containing $\partial_x\partial_t$ terms).

Directly discretizing such mixed derivatives is inconvenient and complicates stability analysis. We therefore first derive an equivalent formulation without mixed derivatives, resulting in a first-order-in-time and second-order-in-space system with a hyperbolic operator coupled to a dissipative (parabolic) operator. Concretely, solving~\eqref{eq:damped_equation_2} for $\dot{\utwo}$ and substituting into~\eqref{eq:damped_equation_1} yields
\begin{equation}
\label{eq:advection_diffusion_eqn}
    \Gammamat\,\dot{\bm{u}}
    =
    \mathcal{D}\,\bm{u}''
    +
    \mathcal{A}(\bm{u})\,\bm{u}'
    +
    \mathcal{S}(\bm{u})\,\bm{u}
    +
    \begin{bmatrix}
         \bm{f}_e\\ \zerovec{6}
    \end{bmatrix},
    \qquad
    \bm{u}:=\begin{bmatrix}\uone\\ \utwo\end{bmatrix}\in\R^{12},
\end{equation}
with $\Gammamat=\operatorname{diag}(\massmat,\flexmat)$ as introduced below.

Here, $\mathcal{D}$ collects the Kelvin--Voigt dissipation, $\mathcal{A}(\bm{u})$ contains the first-order spatial (propagation) terms, and $\mathcal{S}(\bm{u})$ collects zero-order contributions. Explicit expressions for $\mathcal{D}$, $\mathcal{A}$, and $\mathcal{S}$ are given in \ref{app:nonlinear_advection_diffusion}, stated there for $x$-independent coefficient matrices; spatially varying $\Emat$, $\flexmat$, $\dampmat$ generate additional lower-order terms.

This viewpoint connects the problem to a broad DG literature on hyperbolic--parabolic systems~\cite{Cockburn1999,Cockburn2001,CockburnShu1998a}. However, the nonlinearity in $\mathcal{A}(\bm{u})$ complicates the construction of stable numerical fluxes and consistent boundary conditions~\cite{CockburnLinShu1989,CockburnShu1989}, motivating a further reformulation that is more suitable for analysis and computation.

To eliminate the mixed space--time derivatives, we introduce the auxiliary variable
$\rtau=\rtau(x,t)\in\R^6$ by
\begin{equation}
\label{eq:def_rtau}
    \rtau := \dampmat\,\dot{\utwo}.
\end{equation}
By construction, $\rtau$ represents the Kelvin--Voigt (rate-dependent) contribution to the internal force/moment resultants. Substituting $\dampmat\dot{\utwo}=\rtau$ into the damped intrinsic beam system~\eqref{eq:damped_equation_1}--\eqref{eq:damped_equation_2} yields the equivalent first-order-in-time system

\begin{align}
\label{eq:first_order_system_1}
   & \massmat\dot{\uone}
    -\utwo'
    -\rtau'
    -\Emat\utwo
    -\Emat\rtau
    +\Lone(\uone)\massmat\uone
    +\Ltwo(\utwo)\,\flexmat\utwo
    +\Ltwo(\rtau)\,\flexmat\utwo
    =
    \bm{f}_e,
    \\[1.5ex]
\label{eq:first_order_system_2}
    &\flexmat\dot{\utwo}
    -\uone'
    +\Emat^\top\uone
    -\Lone^\top(\uone)\,\flexmat\utwo
    =
    \zerovec{6},
    \\[1.5ex]
\label{eq:first_order_system_3}
&    \flexmat\dampmat^{-1}\rtau
    =
    \uone'
    -\Emat^\top\uone
    +\Lone^\top(\uone)\,\flexmat\utwo.
\end{align}
Equation~\eqref{eq:first_order_system_3} is obtained by combining~\eqref{eq:def_rtau} with~\eqref{eq:first_order_system_2} and expresses $\rtau$
in terms of $\uone$ and $\utwo$ without introducing mixed derivatives.

To write~\eqref{eq:first_order_system_1}--\eqref{eq:first_order_system_3} compactly, define
$\Gammamat=\Gammamat(x)\in\R^{12\times 12}$, $\Pimat\in\R^{12\times 12}$, and $\Hmat=\Hmat(x)\in\R^{6\times 6}$ by
\begin{align}
    \Gammamat
    \coloneqq
    \begin{bmatrix}
        \massmat        &\zeromat{6}{6}       \\
        \zeromat{6}{6}  &\flexmat
    \end{bmatrix},
    \qquad
    \Pimat
    \coloneqq
    \begin{bmatrix}
        \zeromat{6}{6} &-\idmat{6}{6} \\
        -\idmat{6}{6}  &\zeromat{6}{6}
    \end{bmatrix},
    \qquad
    \Hmat
    \coloneqq
    \dampmat\,\flexmat^{-1}.
\end{align}
Here, $\idmat{6}{6}$ denotes the $6\times 6$ identity matrix. We assume that
$\Hmat=\dampmat\,\flexmat^{-1}$ is uniformly symmetric positive definite. This
is the condition required for coercive material dissipation and makes the
inverse auxiliary relation~\eqref{eq:first_order_system_3} well defined. Since
$\Hmat$ and $\flexmat$ are invertible, $\dampmat=\Hmat\flexmat$ is invertible
but need not be symmetric; no commutation assumption is required. All damped
configurations in Section~\ref{sec:numerical_analysis} satisfy this hypothesis.

Define the primary unknown $\bm{u}=\bm{u}(x,t)\in\R^{12}$ and the zero-padded auxiliary vector $\bm{r}=\bm{r}(x,t)\in\R^{12}$ by
\begin{equation}
    \bm{u} := \begin{bmatrix} \uone \\ \utwo \end{bmatrix},
    \qquad
    \bm{r} := \begin{bmatrix} \zerovec{6} \\ \rtau \end{bmatrix}.
\end{equation}
This notation exposes the physical energy ledger:
\begin{equation}
\label{eq:physical_energy}
E(t)=\frac12\int_0^\ell
\left(\uone^\top\massmat\uone+\utwo^\top\flexmat\utwo\right)\,\mathrm{d}x.
\end{equation}
The two terms are kinetic and elastic energy, respectively; $\utwo+\rtau$ is
the total sectional resultant, $\rtau^\top\Hmat^{-1}\rtau$ is the material
dissipation density, and $\uone^\top(\utwo+\rtau)$ is the boundary-power flux.

Collecting all zero-order terms in~\eqref{eq:first_order_system_1}--\eqref{eq:first_order_system_2}, we introduce
$\bm{Q}(\bm{u},\bm{r})\in\R^{12}$ via
\begin{equation}
\label{eq:q_def}
    \bm{Q}(\bm{u},\bm{r})
    =
    \Bmat\bm{u}
    +
    \bm{J}_1(\bm{u})\bm{u}
    +
    \bm{J}_2(\bm{u})\bm{u}
    +
    \bm{J}_3(\rtau)\bm{u}
    +
    \bm{J}_4(\rtau)
    +
    \qext,
\end{equation}
where
\begin{align}
\label{eq:source_term_components}
\begin{matrix*}[c]
    \bm{J}_1(\bm{u})
    \coloneqq
    \begin{bmatrix}
        -\Lone(\uone)\massmat &\zeromat{6}{6} \\
        \zeromat{6}{6}        &\zeromat{6}{6}
    \end{bmatrix},
    &&&&\bm{J}_2(\bm{u})
    \coloneqq
    \begin{bmatrix}
        \zeromat{6}{6} &-\Ltwo(\utwo)\flexmat \\
        \zeromat{6}{6} & \Lone^\top(\uone)\flexmat
    \end{bmatrix},
    \\[4ex]
    \bm{J}_3(\rtau)
    \coloneqq
    \begin{bmatrix}
        \zeromat{6}{6} &-\Ltwo(\rtau)\flexmat \\
        \zeromat{6}{6} &\zeromat{6}{6}
    \end{bmatrix},
    &&&&\bm{J}_4(\rtau)
    \coloneqq
    \begin{bmatrix}
        \Emat\rtau \\ \zerovec{6}
    \end{bmatrix},
    \\[4ex]
    \Bmat
    \coloneqq
    \begin{bmatrix}
        \zeromat{6}{6} &\Emat \\
        -\Emat^\top    &\zeromat{6}{6}
    \end{bmatrix},
    &&&&\qext
    \coloneqq
    \begin{bmatrix}
        \bm{f}_{e} \\ \zerovec{6}
    \end{bmatrix}
    \phantom{.}
\end{matrix*}
\end{align}
With these definitions,~\eqref{eq:first_order_system_1}--\eqref{eq:first_order_system_2} can then be expressed compactly as
\begin{equation}
\label{eq:capacity_form}
    \Gammamat\dot{\bm{u}} + \Pimat\bm{u}' + \Pimat\bm{r}' = \bm{Q}(\bm{u},\bm{r}),
\end{equation}
and~\eqref{eq:first_order_system_3} can be written as
\begin{equation}
\label{eq:capacity_form_aux}
    \Hmat^{-1}\rtau
    =
    \uone'
    -\Emat^\top\uone
    +\Lone^\top(\uone)\,\flexmat\utwo.
\end{equation}

Many classical mixed formulations introduce auxiliary variables as scaled spatial gradients of the primary unknowns~\cite{ArnoldBrezziCockburnMarini2002}.
Here, the auxiliary variable $\rtau$ in~\eqref{eq:def_rtau} is instead a constitutive quantity: it represents the Kelvin--Voigt damping resultant and therefore combines derivative and zero-order contributions.
This preserves the physical interpretation of the damping while yielding a structure that is well suited for the subsequent energy analysis and DG discretization.

The formulation \eqref{eq:capacity_form}--\eqref{eq:capacity_form_aux} offers two key advantages. First, nonlinear contributions that previously appeared within the advective operator of the hyperbolic--parabolic system \eqref{eq:advection_diffusion_eqn} are now confined to lower-order source terms and the auxiliary relation, so that the main evolution equation involves only linear differential operators in space. This significantly simplifies both the continuous stability analysis and the design of consistent numerical fluxes. Second, apart from the damping-related terms involving $\bm{r}$, the resulting system closely resembles the linear hyperbolic formulation of the undamped intrinsic beam equations derived in~\cite{Bleffert2025}. This structural similarity allows us to reuse essential components of the existing analysis, in particular the treatment of boundary conditions and the construction of the hyperbolic numerical flux, as discussed in Section~\ref{sec:discretization}.

\subsection{Boundary Conditions}
\label{subsec:bc}

We next discuss boundary conditions compatible with the energy structure of the reformulated system.

\paragraph{Undamped (hyperbolic) intrinsic beam equations}
In the undamped case ($\dampmat=\zeromat{6}{6}$), the intrinsic beam equations can be written as a first-order hyperbolic balance law for
$\bm{u}=[\uone^\top,\utwo^\top]^\top\in\R^{12}$,
\begin{equation}
\label{eq:pure_hyperbolic}
    \Gammamat \dot{\bm{u}} + \Pimat \bm{u}' = \bm{Q}_{\mathrm{hyp}}(\bm{u}),
\end{equation}
with source term
\begin{equation}
\label{eq:q_hyp_def}
    \bm{Q}_{\mathrm{hyp}}(\bm{u})
    =
    \Bmat\bm{u}
    +
    \bm{J}_1(\bm{u})\bm{u}
    +
    \bm{J}_2(\bm{u})\bm{u}
    +
    \qext.
\end{equation}
Multiplying~\eqref{eq:pure_hyperbolic} by $\Gammamat^{-1}$ yields the standard first-order form
\begin{equation}
\label{eq:first_order_form_hyp}
    \dot{\bm{u}} + \Amat\,\bm{u}' = \Gammamat^{-1}\bm{Q}_{\mathrm{hyp}}(\bm{u}),
    \qquad
    \Amat := \Gammamat^{-1}\Pimat.
\end{equation}
Since $\Gammamat$ is symmetric positive definite and $\Pimat$ is symmetric,
$\Amat$ has real eigenvalues and a complete set of eigenvectors. Let $\bm{T}=\bm{T}(x)\in\R^{12\times 12}$ be an eigenvector matrix of $\Amat$,
\begin{equation}
    \bm{\Lambda} = \bm{T}^{-1}\Amat \bm{T},
\end{equation}
where $\bm{\Lambda}(x)$ is diagonal and contains the characteristic speeds. The characteristic variables are
$\bm{w}=\bm{T}^{-1}\bm{u}$, and the sign of each characteristic speed determines whether the corresponding component is incoming or outgoing at a given boundary.
Consequently, only the incoming characteristic components require boundary data for a well-posed hyperbolic problem.

In~\cite{Bleffert2025} it is shown that one can impose the incoming characteristic data such that the
boundary conditions take the simple Dirichlet form
\begin{equation}
\label{eq:bc_undamped_dirichlet}
    \uone(0,t)=\bm{\alpha}(t),\qquad \utwo(\ell,t)=\bm{\beta}(t),
\end{equation}
for prescribed functions $\bm{\alpha},\bm{\beta}:[0,T]\to\R^6$. In the \emph{undamped} intrinsic beam
model (i.e., for $\dampmat=\zeromat{6}{6}$), the Kelvin--Voigt term in~\eqref{eq:constitutive_law_forces}
vanishes and the force/moment resultant reduces to
\begin{equation}
    \begin{bmatrix}\bm f\\ \bm m\end{bmatrix}
    = \flexmat^{-1}\begin{bmatrix}\bm\gamma\\ \bm\kappa\end{bmatrix}
    = \utwo.
\end{equation}
Thus, \eqref{eq:bc_undamped_dirichlet} corresponds to prescribing sectional velocities at $x=0$ and
prescribing sectional forces and moments at $x=\ell$.

\paragraph{Damped intrinsic beam equations}
For the damped model, the auxiliary variable $\rtau=\dampmat\dot{\utwo}$ (cf.~\eqref{eq:def_rtau})
enters the evolution through~\eqref{eq:capacity_form} and is linked to $\bm u$ by~\eqref{eq:capacity_form_aux}.
Moreover, the Kelvin--Voigt constitutive law implies that the \emph{total} sectional resultants are
\begin{equation}
    \begin{bmatrix}\bm f\\ \bm m\end{bmatrix}
    = \utwo + \rtau
    = \utwo + \dampmat\,\dot{\utwo}.
\end{equation}
Hence, prescribing forces and moments at $x=\ell$ is no longer equivalent to prescribing $\utwo(\ell,t)$ alone.
Instead, for a prescribed traction function $\bm\beta:[0,T]\to\R^6$ we impose
\begin{equation}
\label{eq:bc_total_resultant}
    \utwo(\ell,t)+\rtau(\ell,t)=\bm\beta(t),
    \qquad t\in(0,T].
\end{equation}
Using $\rtau(\ell,t)=\dampmat(\ell)\dot{\utwo}(\ell,t)$,
\eqref{eq:bc_total_resultant} is equivalently the linear vector-valued ODE
\begin{equation}
\label{eq:bc_ode}
    \dot{\utwo}(\ell,t)
    = \dampmat(\ell)^{-1}\!\left(\bm\beta(t)-\utwo(\ell,t)\right),
    \qquad \utwo(\ell,0)=(\utwo)_0(\ell).
\end{equation}
Equation~\eqref{eq:bc_ode} provides a strong-form interpretation of the traction condition~\eqref{eq:bc_total_resultant}.
Given the compatible initial value, denote the solution of~\eqref{eq:bc_ode}
by $\bm u_{2,\mathrm{bc}}^\ell(t)$ and define
\begin{equation}
\label{eq:bc_split_targets}
    (\rtau)_{\mathrm{bc}}^\ell(t)
    :=\bm\beta(t)-\bm u_{2,\mathrm{bc}}^\ell(t)
    =\dampmat(\ell)\dot{\bm u}_{2,\mathrm{bc}}^\ell(t).
\end{equation}
Thus the elastic and viscous boundary targets are determined by the prescribed
total resultant rather than being independent boundary data. The DG
discretization imposes them separately: $\bm u_{2,\mathrm{bc}}^\ell$ enters the
hyperbolic outer state, while $(\rtau)_{\mathrm{bc}}^\ell$ enters the auxiliary
divergence flux. Their sum satisfies~\eqref{eq:bc_total_resultant} identically;
see Section~\ref{subsec:numerical_flux}.

Combining the reformulated system with the boundary conditions above, we arrive at the following initial--boundary value problem:

\begin{problem}[Initial Boundary Value Problem (\ibvp) for the damped intrinsic beam model]
\label{prob:initial_boundary_value}
Under the coefficient assumptions stated above, given an external load
$\bm f_e:[0,\ell]\times[0,T]\to\R^6$, an initial condition
$\bm u_0=[(\uone)_0^\top,(\utwo)_0^\top]^\top:[0,\ell]\to\R^{12}$, and boundary
data $\bm{\alpha},\bm{\beta}:[0,T]\to\R^6$ that are compatible at $t=0$, find
sufficiently regular fields $\bm{u}:[0,\ell]\times[0,T]\to\R^{12}$ and
$\rtau:[0,\ell]\times[0,T]\to\R^{6}$ such that
\begin{align}
\label{eq:initial_boundary_problem}
    \begin{cases}
        \Gammamat \dot{\bm{u}} + \Pimat \bm{u}' + \Pimat \bm{r}' = \bm{Q}(\bm{u},\bm{r})
        &\text{in } (0,\ell)\times(0,T],\\[0.75ex]
        \Hmat^{-1}\rtau
        = \uone' - \Emat^\top\uone + \Lone^\top(\uone)\,\flexmat\utwo
        &\text{in } (0,\ell)\times(0,T],\\[0.75ex]
        \uone(0,t)=\bm{\alpha}(t)
        &\text{for } t\in(0,T],\\[0.75ex]
        \utwo(\ell,t)+\rtau(\ell,t)=\bm{\beta}(t)
        &\text{for } t\in(0,T],\\[0.75ex]
        \bm{u}(x,0)=\bm{u}_0(x)
        &\text{for } x\in[0,\ell],
    \end{cases}
\end{align}
\end{problem}

\section{Continuous Analysis}
\label{sec:continuous_analysis}

To design an energy-stable DG discretization for the
\ibvp~\eqref{eq:initial_boundary_problem}, we first derive a continuous
mechanical energy identity for the reformulated system. For sufficiently
regular solutions, this identity yields an \textit{a priori} bound for the mechanical
energy $E(t)$ defined in Eq.~\eqref{eq:physical_energy}.

Energy estimates for the undamped intrinsic beam equations were derived by Hodges~\cite{Hodges2003}, and Artola et al.~\cite{Artola2021} extended the analysis to the Kelvin--Voigt damped model. Here we adapt these arguments to the reformulation in Problem~\ref{prob:initial_boundary_value} and derive an energy identity that will later be mimicked at the discrete level.

Let $\Omega=[0,\ell]$ and let $\inproduct{\cdot}{\cdot}$ denote the
$L^2(\Omega)$ inner product. For a fixed final time $T>0$, define
\begin{align}
\label{eq:continuous_solution_spaces}
    \mathcal X_T
    &:={}
    H^1\!\left(0,T;L^2(\Omega)^{12}\right)
    \cap
    L^\infty\!\left(0,T;H^1(\Omega)^{12}\right),
    &
    \mathcal Y_T
    &:={}
    L^2\!\left(0,T;H^1(\Omega)^6\right).
\end{align}
We use the mechanical-energy norm and the dual weighted norm
\begin{align}
\label{eq:continuous_energy_norm}
    \engnorm{\bm z}^{2}
    &:={}
    \inproduct{\bm z}{\Gammamat\bm z},
    &
    \engnorm{\bm u(\cdot,t)}^{2}
    &=2E(t),
    \\
\label{eq:continuous_forcing_norm}
    \|\bm g\|_{\Gammamat^{-1}}^{2}
    &:={}
    \int_{\Omega}\bm g^{\top}\Gammamat^{-1}\bm g\,\mathrm{d}x.
\end{align}
Uniform positive definiteness of $\Gammamat$ makes the norms on weighted spaces
$L^2_{\Gammamat}(\Omega)^{12}$ and
$L^2_{\Gammamat^{-1}}(\Omega)^{12}$ equivalent to the standard
$L^2(\Omega)^{12}$ norm.

At each endpoint $x_\delta\in\{0,\ell\}$, fix a partition
$\mathcal I_v(x_\delta)\mathbin{\dot\cup}\mathcal I_t(x_\delta)
=\{1,\ldots,6\}$. The corresponding homogeneous zero-boundary-power
conditions are
\begin{equation}
\label{eq:natural_bc_dib}
    (\uone)_i(x_\delta,t)=0
    \quad(i\in\mathcal I_v(x_\delta)),
    \qquad
    \bigl(\utwo+\rtau\bigr)_i(x_\delta,t)=0
    \quad(i\in\mathcal I_t(x_\delta)).
\end{equation}
For a cantilever,
$\mathcal I_v(0)=\mathcal I_t(\ell)=\{1,\ldots,6\}$, with the complementary
sets empty. Condition~\eqref{eq:natural_bc_dib} is a fixed boundary
prescription, not the under-specified nonlinear product condition
$(\uone)_i(\utwo+\rtau)_i=0$. The energy calculation below applies to any such
fixed partition; well-posedness for arbitrary partitions would require a
separate initial--boundary value analysis. Unless stated otherwise, we use the
homogeneous cantilever partition.

\pagebreak[3]
\begin{theorem}[Continuous mechanical energy balance and stability]
\label{thm:continuous_energy}
Assume the coefficient and structural hypotheses of
Section~\ref{sec:damped_intrinsic_beam_equations}; in particular, assume that
$\Gammamat$ and $\Hmat$ are time-independent and uniformly symmetric positive
definite on $\Omega$. Let
$\qext\in L^2(0,T;L^2_{\Gammamat^{-1}}(\Omega)^{12})$, and let
$(\bm u,\rtau)\in\mathcal X_T\times\mathcal Y_T$ be a sufficiently regular
solution whose two field equations in
Problem~\ref{prob:initial_boundary_value} hold almost everywhere, with
$\bm u(\cdot,0)=\bm u_0$. Then $E$ is absolutely continuous and, for almost
every $t\in(0,T)$,
\begin{equation}
\label{eq:energy_result_general}
    \frac{1}{2}\frac{\mathrm{d}}{\mathrm{d}t}\engnorm{\bm{u}}^2
    +
    \inproduct{\Hmat^{-1}\rtau}{\rtau}
    =
    \bndeval{\uone}{(\utwo+\rtau)}
    +
    \inproduct{\qext}{\bm{u}}.
\end{equation}
If, in addition, the endpoint traces satisfy the fixed homogeneous conditions
\eqref{eq:natural_bc_dib}, then
\begin{equation}
\label{eq:final_bound}
    \engnorm{\bm{u}(\cdot,T)}^{2}
    +
    2\int_{0}^{T}
    \inproduct{\Hmat^{-1}\rtau(\cdot,t)}{\rtau(\cdot,t)}\,\mathrm{d}t
    \le
    e^{T}\left(
        \engnorm{\bm{u}_0}^{2}
        +
        \int_{0}^{T}\|\qext(\cdot,t)\|_{\Gammamat^{-1}}^{2}\,\mathrm{d}t
    \right).
\end{equation}
\end{theorem}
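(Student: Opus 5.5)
The plan is to test the evolution equation \eqref{eq:capacity_form} against $\bm u$ itself and the auxiliary relation \eqref{eq:capacity_form_aux} against $\rtau$, then add the two. Because $\Gammamat$ is symmetric and time-independent, $(\Gammamat\dot{\bm u},\bm u)=\tfrac12\frac{\mathrm d}{\mathrm dt}\engnorm{\bm u}^2$. On $\mathcal X_T$ this holds in the sense of absolutely continuous functions, by the standard Lions--Magenes type identity for $H^1(0,T;L^2)$. This also gives absolute continuity of $E$.

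For the flux terms, note that $\bm u^\top\Pimat\bm u'=-(\uone^\top\utwo)'$ and $\bm u^\top\Pimat\bm r'=-\uone^\top\rtau'$. The first integrates directly to $-\bndeval{\uone}{\utwo}$. The second I integrate by parts as $-\bndeval{\uone}{\rtau}+(\uone',\rtau)$; this is legitimate since $\uone\in H^1$ and $\rtau\in H^1$ for a.e.\ $t$. Testing \eqref{eq:capacity_form_aux} with $\rtau$ then gives
\[
(\uone',\rtau)=(\Hmat^{-1}\rtau,\rtau)+(\Emat^\top\uone,\rtau)-(\Lone^\top(\uone)\flexmat\utwo,\rtau).
\]

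The key step is to show that all source terms in $(\bm Q,\bm u)$ cancel except $(\qext,\bm u)$, together with the two leftover terms above. I would check the pieces in turn:
\begin{itemize}
\item $\bm u^\top\Bmat\bm u=\uone^\top\Emat\utwo-\utwo^\top\Emat^\top\uone=0$.
\item $\uone^\top\Lone(\uone)\massmat\uone=(\Lone^\top(\uone)\uone)^\top\massmat\uone=0$ by \eqref{eq:l_operator_properties_1}.
\item For $\bm J_2$, I get $-\uone^\top\Ltwo(\utwo)\flexmat\utwo+\utwo^\top\Lone^\top(\uone)\flexmat\utwo$. By \eqref{eq:l_operator_properties_2}, $\Lone(\uone)\utwo=\Ltwo^\top(\utwo)\uone$, so both terms equal $(\Lone(\uone)\utwo)^\top\flexmat\utwo$ and cancel.
\item $\bm J_4$ contributes $\uone^\top\Emat\rtau$, which cancels the term $(\Emat^\top\uone,\rtau)$ coming from the auxiliary relation, since it enters with the opposite sign after moving terms.
\item $\bm J_3$ contributes $-\uone^\top\Ltwo(\rtau)\flexmat\utwo=-(\Ltwo^\top(\rtau)\uone)^\top\flexmat\utwo=-(\Lone(\uone)\rtau)^\top\flexmat\utwo$. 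This cancels $\rtau^\top\Lone^\top(\uone)\flexmat\utwo$ from the auxiliary relation.
\end{itemize}
Getting the signs of the auxiliary-relation terms to line up with the $\bm J_3,\bm J_4$ terms is the main bookkeeping obstacle. I would verify it carefully by writing the full sum $(\Gammamat\dot{\bm u}+\Pimat\bm u'+\Pimat\bm r'-\bm Q,\bm u)=0$ and substituting. Collecting everything yields \eqref{eq:energy_result_general}.

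For \eqref{eq:final_bound}, under \eqref{eq:natural_bc_dib} each component of the boundary product $\uone^\top(\utwo+\rtau)$ vanishes at both endpoints. Cauchy--Schwarz in the weighted norms, followed by Young's inequality, gives
\[
(\qext,\bm u)\le\|\qext\|_{\Gammamat^{-1}}\engnorm{\bm u}\le\tfrac12\|\qext\|_{\Gammamat^{-1}}^2+\tfrac12\engnorm{\bm u}^2 .
\]
Multiplying by 2 gives
\[
\frac{\mathrm d}{\mathrm dt}\engnorm{\bm u}^2+2(\Hmat^{-1}\rtau,\rtau)\le\engnorm{\bm u}^2+\|\qext\|^2_{\Gammamat^{-1}} .
\]
Multiplying by $e^{-t}$ and integrating over $(0,T)$ (valid since $E$ is absolutely continuous) yields
\[
e^{-T}\engnorm{\bm u(T)}^2+2\int_0^Te^{-t}(\Hmat^{-1}\rtau,\rtau)\,\mathrm dt\le\engnorm{\bm u_0}^2+\int_0^T\|\qext\|^2_{\Gammamat^{-1}}\,\mathrm dt .
\]
Finally, using $e^{-t}\ge e^{-T}$ and multiplying by $e^{T}$ gives \eqref{eq:final_bound}.
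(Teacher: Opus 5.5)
Your proposal is correct and follows essentially the same energy argument as the paper. You test with $\bm u$ and $\rtau$, use the same skew-symmetry and $\Lone$/$\Ltwo$ cancellations of $\Bmat,\bm J_1,\dots,\bm J_4$ against the auxiliary-relation terms, drop the boundary work under \eqref{eq:natural_bc_dib}, and apply weighted Cauchy--Schwarz, Young and Gr\"onwall. The differences are cosmetic: the paper first writes both equations via the weak forms $\linformA{\cdot}{\cdot}$, $\linformB{\cdot}{\cdot}$, $\linformC{\cdot}{\cdot}$ to mirror the later DG proof and uses the integral Gr\"onwall inequality on $z(t)$, whereas you integrate by parts once in strong form and use the integrating factor $e^{-t}$, reaching the same bound.
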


\begin{proof}
For almost every $t\in(0,T)$, let $\bm{v}\in H^1(\Omega)^{12}$ and
$\bm{w}\in H^1(\Omega)^{6}$ be test functions. Multiplying
\eqref{eq:capacity_form} by $\bm{v}$ and \eqref{eq:capacity_form_aux} by
$\bm{w}$, integrating over $\Omega$, and applying integration by parts yields
the global weak formulation
\begin{align}
\label{eq:global_weak_formulation}
    \inproduct{\Gammamat\dot{\bm{u}}}{\bm{v}}
    &=
    \linformA{\bm{u}}{\bm{v}}
    +\linformB{\bm{r}}{\bm{v}}
    +\inproduct{\bm{Q}(\bm{u},\bm{r})}{\bm{v}},
    \\[1.5ex]
\label{eq:global_weak_formulation_aux}
    \inproduct{\Hmat^{-1}\rtau}{\bm{w}}
    &=
    \linformC{\uone}{\bm{w}}
    -\inproduct{\Emat^\top\uone}{\bm{w}}
    +\inproduct{\Lone^\top(\uone)\flexmat\utwo}{\bm{w}},
\end{align}
where
\begin{align}
\label{eq:linforms_def}
    \linformA{\bm{u}}{\bm{v}}
    &:=
    \inproduct{\Pimat \bm{u}}{\bm{v}'}
    -\bndevallong{\Pimat \bm{u}}{\bm{v}},
    &
    \linformB{\bm{r}}{\bm{v}}
    &:=
    \inproduct{\Pimat \bm{r}}{\bm{v}'}
    -\bndevallong{\Pimat \bm{r}}{\bm{v}},
    \\
    \linformC{\uone}{\bm{w}}
    &:=
    \bndeval{\uone}{\bm{w}}
    -\inproduct{\uone}{\bm{w}'}.
\end{align}

Choosing $\bm{v}=\bm{u}$ and $\bm{w}=\rtau$ in \eqref{eq:global_weak_formulation}--\eqref{eq:global_weak_formulation_aux} and adding the two identities yields
\begin{align}
\label{eq:energy_all_terms}
\begin{split}
    \frac{1}{2}\frac{\mathrm{d}}{\mathrm{d}t}\engnorm{\bm{u}}^2
    +\inproduct{\Hmat^{-1}\rtau}{\rtau}
    =
    &\linformA{\bm{u}}{\bm{u}}
    +\linformB{\bm{r}}{\bm{u}}
    +\inproduct{\bm{Q}(\bm{u},\bm{r})}{\bm{u}}
    \\
    &+\linformC{\uone}{\rtau}
    -\inproduct{\Emat^\top\uone}{\rtau}
    +\inproduct{\Lone^\top(\uone)\flexmat\utwo}{\rtau}.
\end{split}
\end{align}
The norm in~\eqref{eq:continuous_energy_norm} is induced by the kinetic and
elastic energy, so Eq.~\eqref{eq:energy_all_terms} describes the evolution of
the mechanical energy.

\paragraph{First-order spatial operator}
Using symmetry of $\Pimat$ and integration by parts, we obtain
\begin{equation}
    \inproduct{\Pimat\bm{u}}{\bm{u}'}
    = \frac12\,\bndevallong{\Pimat\bm{u}}{\bm{u}},
\end{equation}
and therefore
\begin{equation}
    \linformA{\bm{u}}{\bm{u}}
    = -\frac12\,\bndevallong{\Pimat\bm{u}}{\bm{u}}
    = \bndeval{\uone}{\utwo},
\end{equation}
where the last equality follows from the block structure of $\Pimat$ and $\bm{u}=[\uone^\top,\utwo^\top]^\top$.
Thus, $\linformA{\bm{u}}{\bm{u}}$ contributes only through boundary work.

\paragraph{Auxiliary coupling terms}
Using the definitions of $\Pimat$ and $\bm{r}$, we have
\begin{equation}
    \linformB{\bm{r}}{\bm{u}}
    =
    \bndeval{\rtau}{\uone}
    -
    \inproduct{\rtau}{\uone'}.
\end{equation}
Adding $\linformC{\uone}{\rtau}=\bndeval{\uone}{\rtau}-\inproduct{\uone}{\rtau^\prime}$ gives
\begin{equation}
    \linformB{\bm{r}}{\bm{u}}+\linformC{\uone}{\rtau}
    =
    2\,\bndeval{\uone}{\rtau}
    -
    \inproduct{\rtau}{\uone^\prime}
    -
    \inproduct{\uone}{\rtau^\prime}.
\end{equation}
Integrating by parts in the last term,
$\inproduct{\uone}{\rtau^\prime}=\bndeval{\uone}{\rtau}-\inproduct{\uone^\prime}{\rtau}$,
yields the cancellation
\begin{equation}
    \linformB{\bm{r}}{\bm{u}}+\linformC{\uone}{\rtau}
    =
    \bndeval{\uone}{\rtau},
\end{equation}
so the combined contribution of $\linformB{\bm{r}}{\bm{u}}$ and $\linformC{\uone}{\rtau}$  is again purely a boundary term.

\paragraph{Lower-order terms}
We next simplify the remaining source contribution,
$\inproduct{\bm{Q}(\bm{u},\bm{r})}{\bm{u}}$.
Using~\eqref{eq:q_def} and~\eqref{eq:source_term_components}, we expand
\begin{align}
    \inproduct{\bm{Q}(\bm{u},\bm{r})}{\bm{u}}
    &=
    \inproduct{\Bmat\bm{u}}{\bm{u}}
    +
    \inproduct{\bm{J}_1(\bm{u})\bm{u}}{\bm{u}}
    +
    \inproduct{\bm{J}_2(\bm{u})\bm{u}}{\bm{u}}
    +
    \inproduct{\bm{J}_3(\rtau)\bm{u}}{\bm{u}}
    +
    \inproduct{\bm{J}_4(\rtau)}{\bm{u}}
    +
    \inproduct{\qext}{\bm{u}}.
\end{align}

First, since  $\Bmat$ is skew-symmetric $\Bmat^\top=-\Bmat$, we have $\inproduct{\Bmat\bm{u}}{\bm{u}}=0$.
Next, by the definition of $\bm{J}_1$,
\begin{equation}
    \inproduct{\bm{J}_1(\bm{u})\bm{u}}{\bm{u}}
    =
    -\inproduct{\Lone(\uone)\massmat\uone}{\uone}
    =
    -\inproduct{\massmat\uone}{\Lone^\top(\uone)\uone}
    =0,
\end{equation}
using the identity~\eqref{eq:l_operator_properties_1} $\Lone^\top(\bm{c})\bm{c}=\zerovec{6}$.
For the $\bm{J}_2$ term, we obtain
\begin{align}
    \inproduct{\bm{J}_2(\bm{u})\bm{u}}{\bm{u}}
    &=
    -\inproduct{\Ltwo(\utwo)\flexmat\utwo}{\uone}
    +
    \inproduct{\Lone^\top(\uone)\flexmat\utwo}{\utwo}
    \\
    &=
    -\inproduct{\Ltwo(\utwo)\flexmat\utwo}{\uone}
    +
    \inproduct{\flexmat\utwo}{\Lone(\uone)\utwo}
    \\
    &=
    -\inproduct{\Ltwo(\utwo)\flexmat\utwo}{\uone}
    +
    \inproduct{\flexmat\utwo}{\Ltwo^\top(\utwo)\uone}
    =0,
\end{align}
where we used the identity~\eqref{eq:l_operator_properties_2}
, i.e. $\Lone (\uone)\utwo = \Ltwo^\top(\utwo)\uone$ in the last step.

For the damping-dependent contribution $\bm{J}_3$, the block structure gives
\begin{equation}
    \inproduct{\bm{J}_3(\rtau)\bm{u}}{\bm{u}}
    =
    -\inproduct{\Ltwo(\rtau)\flexmat\utwo}{\uone}
    =
    -\inproduct{\Lone^\top(\uone)\flexmat\utwo}{\rtau},
\end{equation}
again by~\eqref{eq:l_operator_properties_2} and symmetry of the $L^2$ inner product.
Finally,
\begin{equation}
    \inproduct{\bm{J}_4(\rtau)}{\bm{u}}
    =
    \inproduct{\Emat\rtau}{\uone}
    =
    \inproduct{\Emat^\top\uone}{\rtau}.
\end{equation}

Collecting these identities, we arrive at
\begin{equation}
\label{eq:source_term_simplified}
    \inproduct{\bm{Q}(\bm{u},\bm{r})}{\bm{u}}
    =
    -\inproduct{\Lone^\top(\uone)\flexmat\utwo}{\rtau}
    +
    \inproduct{\Emat^\top\uone}{\rtau}
    +
    \inproduct{\qext}{\bm{u}}.
\end{equation}

Each term in \eqref{eq:source_term_simplified}, except for the forcing term
$\inproduct{\qext}{\bm{u}}$, cancels with a corresponding term in
\eqref{eq:energy_all_terms}. Substitution into~\eqref{eq:energy_all_terms}
gives the boundary-work identity~\eqref{eq:energy_result_general}, which proves
the first assertion.

\paragraph{Boundary conditions}
Under~\eqref{eq:natural_bc_dib}, every component of
$\uone^\top(\utwo+\rtau)$ vanishes at both endpoints, hence
\(
    \bndeval{\uone}{(\utwo+\rtau)}=0.
\)
The boundary term in~\eqref{eq:energy_result_general} then vanishes and we obtain
\begin{equation}
\label{eq:combined_energy_bc_0}
    \frac{1}{2}\frac{\mathrm{d}}{\mathrm{d}t} \engnorm{\bm{u}}^2
    +
    \inproduct{\Hmat^{-1}\rtau}{\rtau}
    =
    \inproduct{\bm{u}}{\qext}.
\end{equation}

\paragraph{Forcing estimate and Grönwall bound}
Since $\Gammamat$ is symmetric positive definite for all $x\in\Omega$, the (principal) matrix functions
$\Gammamat^{1/2}(x)$ and $\Gammamat^{-1/2}(x)$ are well-defined and symmetric positive definite.
Using $\Gammamat^{1/2}\Gammamat^{-1/2}=I$ pointwise, we rewrite the forcing term as
\begin{equation}
    \inproduct{\bm{u}}{\qext}
    =
    \inproduct{\Gammamat^{1/2}\bm{u}}{\Gammamat^{-1/2}\qext}.
\end{equation}
By Cauchy--Schwarz and Young's inequality,
\begin{equation}
    \inproduct{\Gammamat^{1/2}\bm{u}}{\Gammamat^{-1/2}\qext}
    \le
    \frac12 \|\Gammamat^{1/2}\bm{u}\|_{L^2(\Omega)}^{2}
    +
    \frac12 \|\Gammamat^{-1/2}\qext\|_{L^2(\Omega)}^{2}
    =
    \frac12 \engnorm{\bm{u}}^{2}
    +
    \frac12 \|\qext\|_{\Gammamat^{-1}}^{2}.
\end{equation}
Substituting this bound into~\eqref{eq:combined_energy_bc_0} and multiplying by $2$ yields
\begin{equation}
\label{eq:stability_bound_1}
    \frac{\mathrm{d}}{\mathrm{d}t} \engnorm{\bm{u}}^{2}
    +
    2\,\inproduct{\Hmat^{-1}\rtau}{\rtau}
    \le
    \engnorm{\bm{u}}^{2}
    +
    \|\qext\|_{\Gammamat^{-1}}^{2}.
\end{equation}

Integrating~\eqref{eq:stability_bound_1} over $[0,T]$ gives
\begin{equation}
\label{eq:stability_bound_2}
    \engnorm{\bm{u}(\cdot,T)}^{2}
    +
    2\int_{0}^{T}\inproduct{\Hmat^{-1}\rtau(\cdot,t)}{\rtau(\cdot,t)}\,\mathrm{d}t
    \le
    \engnorm{\bm{u}_0}^{2}
    +
    \int_{0}^{T}\engnorm{\bm{u}(\cdot,t)}^{2}\,\mathrm{d}t
    +
    \int_{0}^{T}\|\qext(\cdot,t)\|_{\Gammamat^{-1}}^{2}\,\mathrm{d}t.
\end{equation}

Define
\begin{equation}
    z(t)
    :=
    \engnorm{\bm{u}(\cdot,t)}^{2}
    +
    2\int_{0}^{t}\inproduct{\Hmat^{-1}\rtau(\cdot,s)}{\rtau(\cdot,s)}\,\mathrm{d}s.
\end{equation}
Then $z(0)=\engnorm{\bm{u}_0}^{2}$ and $\engnorm{\bm{u}(\cdot,t)}^{2}\le z(t)$, so~\eqref{eq:stability_bound_2} implies
\begin{equation}
    z(T)
    \le
    z(0)
    +
    \int_{0}^{T} z(t)\,\mathrm{d}t
    +
    \int_{0}^{T}\|\qext(\cdot,t)\|_{\Gammamat^{-1}}^{2}\,\mathrm{d}t.
\end{equation}
Grönwall's inequality now gives~\eqref{eq:final_bound}, completing the proof.
\end{proof}

In particular, if \(\qext=\bm0\) and the boundary data are homogeneous, then
\eqref{eq:combined_energy_bc_0} shows that the mechanical energy is
non-increasing.
This estimate shows that, under homogeneous natural boundary conditions and in
the presence of bounded external forces and moments, the mechanical energy of
any sufficiently regular solution is bounded. In particular, the damping
contribution provides a non-negative dissipation term, while the growth of the
energy is controlled by the initial data and the total energy input of the
external loads.

Having established the continuous energy estimate, we now proceed to the spatial discretization. In the next section, we derive a DG formulation that mimics the structure of \eqref{eq:combined_energy_bc_0} at the semi-discrete level and prove a corresponding discrete energy inequality.

\section{Discretization}
\label{sec:discretization}

In this section, we derive a semi-discrete DG formulation of the weak problem
\eqref{eq:global_weak_formulation}--\eqref{eq:global_weak_formulation_aux} for the damped intrinsic beam system.

We partition $\Omega$ into a mesh $\mathcal{T}_h$ of $N_c$ non-overlapping elements
\begin{equation}
  \mathcal{T}_h = \bigcup_{i=1}^{N_c} \Omega_i,
  \qquad
  \Omega_i = [x_{i-1}, x_{i}],
\end{equation}
with interior interfaces $\{x_i\}_{i\in\mathcal{I}_\mathrm{int}}$ indexed by $\mathcal{I}_{\mathrm{int}}:=\{1,\dots,N_c-1\}$, and physical boundaries
at $x_0=0$ and $x_{N_c}=\ell$, with $\mathcal{I}_{\mathrm{ext}}:=\{0,N_c\}$.

On each element $\Omega_i$, we approximate $\bm u$ and $\rtau$ by polynomials of degree at most $k$,
\begin{equation}
  \uh|_{\Omega_i} \in P_k^{12}(\Omega_i),
  \qquad
  \rtauh|_{\Omega_i} \in P_k^{6}(\Omega_i),
\end{equation}
and define the discrete auxiliary vector
\begin{equation}
    \bm{r}_h :=
    \begin{bmatrix}
        \zerovec{6} \\
        \rtauh
    \end{bmatrix}.
\end{equation}
Likewise, we take test functions $\bm v_h|_{\Omega_i}\in P_k^{12}(\Omega_i)$ and $\bm w_h|_{\Omega_i}\in P_k^{6}(\Omega_i)$.

\paragraph{Elementwise semi-discrete formulation}
On each element $\Omega_i$, the semi-discrete DG formulation reads
\begin{align}
\label{eq:local_semi_discrete_formulation}
    &\inproduct[\Omega_i]{\Gammamat \dot{\uh}}{\bm{v}_h}
    + \left[ \widehat{\Pimat \uh}^\top \bm{v}_h \right]_{x_{i-1}}^{x_{i}}
    - \inproduct[\Omega_i]{\Pimat \uh}{\bm{v}_h^\prime}
    + \left[ \widehat{\Pimat \bm{r}_h}^\top \bm{v}_h \right]_{x_{i-1}}^{x_{i}}
    - \inproduct[\Omega_i]{\Pimat \bm{r}_h}{\bm{v}_h^\prime}
    = \inproduct[\Omega_i]{\bm{Q}(\uh,\bm{r}_h)}{\bm{v}_h}, \notag
    \\[1.25ex]
    &\inproduct[\Omega_i]{\Hmat^{-1}\rtauh}{\bm{w}_h}
    = \left[ \widehat{\uoneh}^\top \bm{w}_h \right]_{x_{i-1}}^{x_{i}}
    - \inproduct[\Omega_i]{\uoneh}{\bm{w}_h^\prime}
    - \inproduct[\Omega_i]{\Emat^\top \uoneh}{\bm{w}_h}
    + \inproduct[\Omega_i]{\Lone^\top(\uoneh) \flexmat \utwoh}{\bm{w}_h}.
\end{align}
Here $\widehat{\Pimat \uh}$, $\widehat{\Pimat \bm{r}_h}$, and $\widehat{\uoneh}$
denote numerical fluxes evaluated at $x_{i-1}$ and $x_i$, depending on the interior/exterior traces (and on prescribed outer states at $x=0,\ell$).
Precise definitions are given in Section~\ref{subsec:numerical_flux}.

\subsection{Numerical Fluxes and Boundary Treatment}
\label{subsec:numerical_flux}

To close the semi-discrete formulation, we specify numerical fluxes for the first-order spatial
operator and for the auxiliary coupling terms. These fluxes also provide a weak imposition of
the boundary conditions in IBVP~\ref{prob:initial_boundary_value}.

\paragraph{Interface traces, jumps, and averages}
Let $w$ be a scalar-, vector-, or matrix-valued (piecewise smooth) function. At an interior
interface $x_i$, $i\in\mathcal{I}_{\mathrm{int}}$, we define the left and right traces by
\begin{equation}
    w^{-}(x_i) := \lim_{\epsilon \to 0^+} w(x_i-\epsilon),
    \qquad
    w^{+}(x_i) := \lim_{\epsilon \to 0^+} w(x_i+\epsilon).
\end{equation}
The jump and average operators are
\begin{equation}
    \jump{w} := w^- - w^+,
    \qquad
    \avg{w} := \tfrac{1}{2}\left(w^- + w^+\right).
\end{equation}
We will also use the standard product identity
\begin{equation}
\label{eq:jump_avg_property}
    \jump{ab} = \avg{a}\,\jump{b} + \jump{a}\,\avg{b},
\end{equation}
whenever the products are well-defined.

At the physical boundaries $x\in\{0,\ell\}$, the interior trace is taken from the adjacent element, and the exterior trace is defined via the outer states.

\paragraph{Flux for the first-order spatial operator}
The first-order spatial operator in \eqref{eq:local_semi_discrete_formulation} is governed by
$\Pimat\bm u$. Since the propagative (hyperbolic) part of the Kelvin--Voigt damped intrinsic
beam system has the same characteristic structure as the undamped intrinsic beam equations,
we adopt the Lax--Friedrichs-type numerical flux used in~\cite{Bleffert2025}; since the penalty employs the full characteristic matrix $\Gammamat|\Amat|$ rather than a scalar wave-speed bound, $\sigma=1$ yields the exact characteristic upwind flux. Specifically, at any interface point $x_i$ we define
\begin{equation}
\label{eq:flux_pi_u}
    \widehat{\Pimat \uh}(x_i)
    :=
    \Pimat \avg{\uh}(x_i)
    +
    \frac{\sigma}{2}\,\Gammamat |\Amat|\,\jump{\uh}(x_i),
\end{equation}
where $\sigma\ge 0$ is a stabilization parameter.
The choice $\sigma=0$ yields the central flux, whereas $\sigma=1$ gives the fully upwind (characteristic) flux.

The matrix $|\Amat|$ is defined through an eigenvalue decomposition
$\Amat=\bm T\bm\Lambda\bm T^{-1}$ by
\begin{equation}
    |\Amat|:=\bm T|\bm\Lambda|\bm T^{-1},
\end{equation}
where $|\bm\Lambda|$ is obtained by taking absolute values of the diagonal entries of $\bm\Lambda$.
For the smooth material coefficients assumed here, their left and right traces
coincide, so \(\Gammamat|\Amat|\) is evaluated at that unique interface value.
Discontinuous material interfaces would require a separate two-material flux
and are not covered by the present estimate.

At interior interfaces, $\sigma\in[0,1]$ trades accuracy against numerical dissipation; at the
physical boundaries $x=0$ and $x=\ell$ we always set $\sigma=1$, which, combined with the outer
states prescribed below, yields the dissipative boundary terms of
Section~\ref{subsec:discrete_energy_stability}.

\paragraph{Fluxes for the auxiliary coupling}
The first-order flux \(\widehat{\Pimat\uh}\) and the two fluxes in the
Kelvin--Voigt auxiliary pair are independent numerical choices.  For the latter
we use the alternating LDG orientation~\cite{CockburnShu1998a}
\begin{equation}
\label{eq:flux_aux}
    \widehat{\uoneh}:=\uoneh^-,
    \qquad
    \widehat{\rtauh}:=\rtauh^+,
    \qquad
    \widehat{\Pimat\bm r_h}
    :=\Pimat
    \begin{bmatrix}\zerovec{6}\\ \widehat{\rtauh}\end{bmatrix}
    =
    \begin{bmatrix}-\widehat{\rtauh}\\ \zerovec{6}\end{bmatrix}.
\end{equation}
Thus ``upwind'' below refers to the characteristic first-order flux
\eqref{eq:flux_pi_u}, whereas ``alternating'' refers to the complementary
one-sided traces in~\eqref{eq:flux_aux}.  Reversing both sides globally gives
the other standard alternating orientation; we use~\eqref{eq:flux_aux} because
it aligns the left essential datum for \(\uone\) with the right natural datum
for \(\rtau\).

For the controlled BR1 comparison~\cite{bassi1997high} in
Section~\ref{subsec:mms}, the interior traces are instead the central pair
\(\widehat{\uoneh}=\avg{\uoneh}\) and
\(\widehat{\rtauh}=\avg{\rtauh}\). Both choices satisfy
\begin{equation}
\label{eq:aux_trace_compatibility}
    \widehat{\rtauh}^{\top}\jump{\uoneh}
    +\widehat{\uoneh}^{\top}\jump{\rtauh}
    =\jump{\uoneh^{\top}\rtauh}.
\end{equation}
by direct expansion for~\eqref{eq:flux_aux} and by the jump--average product
identity~\eqref{eq:jump_avg_property} for BR1. Thus both are neutral in the
auxiliary-interface energy contraction; this does not assert identical
accuracy or convergence of the reconstructed \(\rtauh\).

\paragraph{Boundary treatment via outer states}
\label{par:outer_states}

At the physical boundaries, numerical fluxes are evaluated using an \emph{interior trace}
(from the adjacent element) and a \emph{fictitious exterior state} (ghost state).
We denote the interior trace by $\uh^{+}(0,t)$ at $x=0$ and by $\uh^{-}(\ell,t)$ at $x=\ell$.
The corresponding outer (ghost) states are $\uh^{-}(0,t)$ and $\uh^{+}(\ell,t)$, respectively.

We consider boundary data consistent with Problem~\ref{prob:initial_boundary_value},
\begin{equation}
\bm u_1(0,t)=\bm\alpha(t),
\qquad
(\bm u_2+\rtau)(\ell,t)=\bm\beta(t).
\end{equation}
The second condition prescribes the \emph{total} (Kelvin--Voigt) sectional resultants at $x=\ell$.
We use the continuum-derived split~\eqref{eq:bc_split_targets}: the hyperbolic
outer state below receives $\bm u_{2,\mathrm{bc}}^\ell$, and the auxiliary
divergence flux receives $(\rtau)_{\mathrm{bc}}^\ell$.

Define
\begin{equation}
\Psimat := \flexmat^{-1/2}\massmat^{-1}\flexmat^{-1/2},
\qquad
\bm S_0 := \flexmat^{-1/2}\Psimat^{-1/2}\flexmat^{-1/2},
\qquad
\bm S_\ell := \flexmat^{1/2}\Psimat^{1/2}\flexmat^{1/2}.
\end{equation}
Since $\flexmat$ and $\massmat$ are symmetric positive definite, so is $\Psimat$, and the
(principal) matrix square roots $\Psimat^{\pm1/2}$ are well-defined; hence
$\bm S_0$ and $\bm S_\ell$ are symmetric positive definite as well. Moreover,
$\bm S_0\bm S_\ell=\bm S_\ell\bm S_0=\idmat{6}{6}$, an identity used repeatedly in the boundary
analysis below.

Following the characteristic outer-state construction used for the intrinsic beam equations
(cf.~\cite{Bleffert2025}), we prescribe the ghost states as
\begin{align}
\label{eq:outer_states}
\uh^{-}(0,t)
=
\begin{bmatrix}
\bm\alpha(t) \\
\utwoh^{+}(0,t) + \bm S_0\bigl(\uoneh^{+}(0,t)-\bm\alpha(t)\bigr)
\end{bmatrix},
\qquad
\uh^{+}(\ell,t)
=
\begin{bmatrix}
\uoneh^{-}(\ell,t) - \bm S_\ell\bigl(\utwoh^{-}(\ell,t)-\bm u_{2,\mathrm{bc}}^\ell(t)\bigr) \\
\bm u_{2,\mathrm{bc}}^\ell(t)
\end{bmatrix}.
\end{align}

By construction, these choices enforce the prescribed boundary data in the limit of mesh refinement.
For homogeneous boundary data ($\bm\alpha\equiv\zerovec{6}$, $\bm\beta\equiv\zerovec{6}$) they moreover
yield dissipative boundary contributions in the discrete energy balance (see
Section~\ref{subsec:discrete_energy_stability}); the inhomogeneous case is discussed in
Remark~\ref{rem:inhomogeneous_bc_data}.

\subsection{Global Semi-Discrete Formulation}
\label{subsec:global_semidiscrete}

The global semi-discrete DG formulation is obtained by assembling the elementwise
relations~\eqref{eq:local_semi_discrete_formulation} over all elements
$\{\Omega_i\}_{i=1}^{N_c}$.

\begin{problem}[Global semi-discrete DG formulation]
\label{prob:global_formulation}
Let \(T>0\) and define
\begin{align}
V_h^k &:= \bigl\{\bm v_h\in L^2(\Omega)^{12}\,\big|\, \bm v_h|_{\Omega_i}\in P_k^{12}(\Omega_i),
\ i=1,\dots,N_c\bigr\}, 
\\
W_h^k &:= \bigl\{\bm w_h\in L^2(\Omega)^{6}\,\big|\, \bm w_h|_{\Omega_i}\in P_k^{6}(\Omega_i),
\ i=1,\dots,N_c\bigr\}.
\end{align}
Given \(\bm u_{h,0}\in V_h^k\), we seek
\begin{equation}
\label{eq:semidiscrete_solution_spaces}
    \uh\in H^1(0,T;V_h^k),
    \qquad
    \rtauh\in L^2(0,T;W_h^k),
\end{equation}
with
\begin{equation}
\label{eq:semidiscrete_initial_condition}
    \uh(\cdot,0)=\bm u_{h,0},
\end{equation}
such that, for almost every \(t\in(0,T)\) and all
$(\bm v_h,\bm w_h)\in V_h^k\times W_h^k$,
\begin{align}
\label{eq:global_1_hat}
\sum_{i=1}^{N_c} \inproduct[\Omega_i]{\Gammamat \dot{\uh}}{\bm{v}_h}
&=
\dlinformA{\uh}{\bm{v}_h}
+\dlinformB{\bm{r}_h}{\bm{v}_h}
+\sum_{i=1}^{N_c}\inproduct[\Omega_i]{\bm{Q}(\uh,\bm{r}_h)}{\bm{v}_h},
\\[0.5ex]
\label{eq:global_2_hat}
\sum_{i=1}^{N_c} \inproduct[\Omega_i]{\Hmat^{-1}\rtauh}{\bm{w}_h}
&=
\dlinformC{\uoneh}{\bm{w}_h}
-\sum_{i=1}^{N_c}\inproduct[\Omega_i]{\Emat^\top \uoneh}{\bm{w}_h}
+\sum_{i=1}^{N_c}\inproduct[\Omega_i]{\Lone^\top(\uoneh)\flexmat\utwoh}{\bm{w}_h},
\end{align}
where $\bm r_h := [\zerovec{6}^\top,\rtauh^\top]^\top$ and the semi-discrete forms are
\begin{align}
\dlinformA{\uh}{\bm{v}_h}
:=\;&
\sum_{i=1}^{N_c}\inproduct[\Omega_i]{\Pimat \uh}{\bm{v}_h^\prime}
-\sum_{i\in\mathcal I_{\mathrm{int}}}\interfacepoint[x_i]{\widehat{\Pimat \uh}}{\jump{\bm v_h}}
-\externalboundary{\widehat{\Pimat \uh}}{\bm v_h},
\\
\dlinformB{\bm{r}_h}{\bm{v}_h}
:=\;&
\sum_{i=1}^{N_c}\inproduct[\Omega_i]{\Pimat \bm{r}_h}{\bm{v}_h^\prime}
-\sum_{i\in\mathcal I_{\mathrm{int}}}\interfacepoint[x_i]{\widehat{\Pimat \bm r_h}}{\jump{\bm v_h}}
-\externalboundary{\widehat{\Pimat \bm r_h}}{\bm v_h},
\\
\dlinformC{\uoneh}{\bm{w}_h}
:=\;&
\sum_{i\in\mathcal I_{\mathrm{int}}}\interfacepointsingle[x_i]{\widehat{\uoneh}}{\jump{\bm w_h}}
+\externalboundarysingle{\widehat{\uoneh}}{\bm w_h}
-\sum_{i=1}^{N_c}\inproduct[\Omega_i]{\uoneh}{\bm{w}_h^\prime}.
\end{align}
Here $\widehat{\Pimat \uh}$, $\widehat{\Pimat \bm r_h}$ and $\widehat{\uoneh}$
are numerical fluxes defined in Section~\ref{subsec:numerical_flux}, using interior traces
and the outer states at $x=0,\ell$.
\end{problem}

\subsection{Energy Stability}
\label{subsec:discrete_energy_stability}

We prove that the semi-discrete DG scheme satisfies an energy--dissipation identity
analogous to the continuous relation~\eqref{eq:combined_energy_bc_0}.

Mirroring the continuous analysis (cf.\ the discussion preceding~\eqref{eq:combined_energy_bc_0}),
we assume \emph{homogeneous boundary data} throughout this subsection:
$\bm\alpha\equiv\zerovec{6}$ and $\bm\beta\equiv\zerovec{6}$, realized by the trivial splitting
$\bm u_{2,\mathrm{bc}}^\ell\equiv\zerovec{6}$ and $(\rtau)_{\mathrm{bc}}^\ell\equiv\zerovec{6}$.
The outer states~\eqref{eq:outer_states} then reduce to
\begin{equation}
\label{eq:homogeneous_outer_states}
\uh^{-}(0,t)
=
\begin{bmatrix}
\zerovec{6}\\[2pt]
\utwoh^{+} + \bm S_0\,\uoneh^{+}
\end{bmatrix},
\qquad
\uh^{+}(\ell,t)
=
\begin{bmatrix}
\uoneh^{-} - \bm S_\ell\,\utwoh^{-}\\[2pt]
\zerovec{6}
\end{bmatrix},
\end{equation}
which are exactly the states substituted in
\ref{app:boundary_contribution_analysis}. The inhomogeneous case is addressed in
Remark~\ref{rem:inhomogeneous_bc_data} below.

For both auxiliary-interface choices, the homogeneous physical-boundary traces
are
\begin{equation}
\label{eq:homogeneous_aux_boundary_traces}
\begin{aligned}
x=0:\quad
&\widehat{\uoneh}=\zerovec{6},
&\widehat{\Pimat\bm r_h}&=\Pimat\bm r_h^+,
\\
x=\ell:\quad
&\widehat{\uoneh}=\uoneh^-,
&\widehat{\Pimat\bm r_h}&=\zerovec{12}.
\end{aligned}
\end{equation}
We extend the continuous energy norm to the broken finite element space by
\begin{equation}
\label{eq:discrete_energy_norm}
    \engnorm{\uh}^2
    :=
    \sum_{i=1}^{N_c}\inproduct[\Omega_i]{\Gammamat\uh}{\uh},
\end{equation}
and write the physical-boundary dissipation as
\begin{equation}
\label{eq:boundary_dissipation}
B(t):=
(\uoneh^{+})^\top
\bm S_0\,\uoneh^{+}\Big|_{x=0}
+
(\utwoh^{-})^\top
\bm S_\ell\,\utwoh^{-}\Big|_{x=\ell}
\ge 0.
\end{equation}

\pagebreak[3]
\begin{theorem}[Semi-discrete mechanical energy identity and stability]
\label{thm:semidiscrete_energy}
Assume the coefficient hypotheses of Theorem~\ref{thm:continuous_energy}, with
material coefficients continuous across element interfaces and the element
inner products in Problem~\ref{prob:global_formulation} evaluated exactly. Let
\((\uh,\rtauh)\) solve Problem~\ref{prob:global_formulation} with the
homogeneous hyperbolic and auxiliary boundary states
\eqref{eq:homogeneous_outer_states}--\eqref{eq:homogeneous_aux_boundary_traces}.
At interior interfaces, let the characteristic flux~\eqref{eq:flux_pi_u} use
\(\sigma\ge0\), and choose either the alternating LDG traces
\eqref{eq:flux_aux}, the central BR1 traces, or any auxiliary traces satisfying
\eqref{eq:aux_trace_compatibility}. At the physical boundaries, use the fully
upwind characteristic flux \(\sigma=1\). Then, for almost every \(t\in(0,T)\),
\begin{align}
\label{eq:disc_damp_eqn_final}
\frac12 \frac{\mathrm{d}}{\mathrm{d}t} \engnorm{\uh}^2
&+
\sum_{i\in\mathcal I_{\mathrm{int}}}
\interfacepoint[x_i]{\frac{\sigma}{2}\,\Gammamat|\Amat|\,\jump{\uh}}{\jump{\uh}}
+
\sum_{i=1}^{N_c}\inproduct[\Omega_i]{\Hmat^{-1}\rtauh}{\rtauh}
\nonumber\\
&\quad
+B(t)
=
\sum_{i=1}^{N_c}\inproduct[\Omega_i]{\uh}{\qext}.
\end{align}
If additionally
\(\qext\in L^2(0,T;L^2_{\Gammamat^{-1}}(\Omega)^{12})\), then
\begin{align}
\label{eq:dg_final_bound}
\engnorm{\uh(\cdot,T)}^2
&+ 2\int_0^T
\sum_{i\in\mathcal I_{\mathrm{int}}}
\interfacepoint[x_i]{\tfrac{\sigma}{2}\,\Gammamat|\Amat|\,\jump{\uh(\cdot,t)}}{\jump{\uh(\cdot,t)}}\,\mathrm{d}t
+ 2\int_0^T \sum_{i=1}^{N_c}\inproduct[\Omega_i]{\Hmat^{-1}\rtauh(\cdot,t)}{\rtauh(\cdot,t)}\,\mathrm{d}t
\nonumber\\
&\quad
+ 2\int_0^T B(t)\,\mathrm{d}t
\le
e^T\left(
\engnorm{\bm u_{h,0}}^2
+ \int_0^T \|\qext(\cdot,t)\|_{\Gammamat^{-1}}^2\,\mathrm{d}t
\right).
\end{align}
\end{theorem}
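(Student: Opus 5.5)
We mirror the proof of Theorem~\ref{thm:continuous_energy} at the discrete level. Choose $\bm v_h=\uh$ in \eqref{eq:global_1_hat} and $\bm w_h=\rtauh$ in \eqref{eq:global_2_hat}, which is admissible because both lie in the discrete spaces. Then add the two identities. The left-hand side becomes $\tfrac12\frac{\mathrm d}{\mathrm dt}\engnorm{\uh}^2+\sum_i\inproduct[\Omega_i]{\Hmat^{-1}\rtauh}{\rtauh}$, using that $\Gammamat$ is symmetric and time-independent.

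\textbf{Volume terms.} The volume source terms are handled exactly as in the continuous proof, since those identities are pointwise algebraic and the inner products are evaluated exactly. The $\Bmat$, $\bm J_1$ and $\bm J_2$ contributions vanish. The $\bm J_3$ and $\bm J_4$ contributions cancel the $\Lone^\top(\uoneh)\flexmat\utwoh$ and $\Emat^\top\uoneh$ terms of \eqref{eq:global_2_hat} tested with $\rtauh$. Only $\sum_i\inproduct[\Omega_i]{\uh}{\qext}$ survives.

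\textbf{Derivative volume terms.} Integrate by parts elementwise.
\begin{itemize}
\item The hyperbolic term gives $\sum_i\inproduct[\Omega_i]{\Pimat\uh}{\uh'}=-\tfrac12\sum_{\mathcal I_{\mathrm{int}}}\jump{\uh^\top\Pimat\uh}+\tfrac12[\uh^\top\Pimat\uh]_0^\ell$, with interior traces at the endpoints.
\item The auxiliary terms $\sum_i\inproduct[\Omega_i]{\Pimat\bm r_h}{\uh'}-\sum_i\inproduct[\Omega_i]{\uoneh}{\rtauh'}$ equal $-\sum_i\int_{\Omega_i}(\rtauh^\top\uoneh)'$. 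This gives $\sum_{\mathcal I_{\mathrm{int}}}\jump{\uoneh^\top\rtauh}$ plus physical-boundary terms.
\end{itemize}

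\textbf{Interior interfaces.}
\begin{itemize}
\item For the hyperbolic part, use \eqref{eq:jump_avg_property} in the form $\tfrac12\jump{\uh^\top\Pimat\uh}=\avg{\uh}^\top\Pimat\jump{\uh}$. The central part of \eqref{eq:flux_pi_u} then cancels exactly, leaving $-\tfrac{\sigma}{2}\jump{\uh}^\top\Gammamat|\Amat|\jump{\uh}$. This is nonnegative dissipation once one notes that $\Gammamat|\Amat|=\Gammamat\bm T|\bm\Lambda|\bm T^{-1}$ is symmetric positive semidefinite. To see this, choose $\bm T$ $\Gammamat$-orthonormal ($\bm T^\top\Gammamat\bm T=I$), so that $\Gammamat|\Amat|=\bm T^{-\top}|\bm\Lambda|\bm T^{-1}$. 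This is where continuity of the coefficients across interfaces is used.
\item For the auxiliary part, the flux terms contribute $\widehat{\rtauh}^\top\jump{\uoneh}+\widehat{\uoneh}^\top\jump{\rtauh}$, using $\widehat{\Pimat\bm r_h}^\top\jump{\uh}=-\widehat{\rtauh}^\top\jump{\uoneh}$. Up to sign, this is exactly \eqref{eq:aux_trace_compatibility}, so it cancels the $\jump{\uoneh^\top\rtauh}$ term. This holds for LDG, BR1, or any compatible traces.
\end{itemize}

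\textbf{Physical boundaries.} This is the step I expect to be the main obstacle. Insert \eqref{eq:homogeneous_outer_states}--\eqref{eq:homogeneous_aux_boundary_traces} with $\sigma=1$.
\begin{itemize}
\item The auxiliary boundary terms should cancel. At $x=0$, $\widehat{\uoneh}=0$ and $\widehat{\Pimat\bm r_h}=\Pimat\bm r_h^+$, which reproduces the interior-trace term. At $x=\ell$, $\widehat{\Pimat\bm r_h}=0$ and $\widehat{\uoneh}=\uoneh^-$, which likewise cancels the integration-by-parts term.
\item For the hyperbolic boundary terms, compute $-\widehat{\Pimat\uh}^\top\uh+\tfrac12\uh^\top\Pimat\uh$ at each endpoint, with the proper orientation signs. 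This requires an explicit form of $\Gammamat|\Amat|$. Writing $\Amat=\begin{bmatrix}0&-\massmat^{-1}\\-\flexmat^{-1}&0\end{bmatrix}$, one has $\Amat^2=\mathrm{diag}(\massmat^{-1}\flexmat^{-1},\flexmat^{-1}\massmat^{-1})$, and $|\Amat|=(\Amat^2)^{1/2}$. Expressing the square roots through $\Psimat$, the goal is to show $\Gammamat|\Amat|=\mathrm{diag}(\bm S_0,\bm S_\ell)$ with $\bm S_0=\massmat(\massmat^{-1}\flexmat^{-1})^{1/2}$, and similarly for $\bm S_\ell$.
\item With the ghost states, I then expect $\jump{\uh}$ at each boundary to isolate $\uoneh^+$ (respectively $\utwoh^-$). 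Using $\bm S_0\bm S_\ell=I$, all cross terms should cancel, leaving exactly $-(\uoneh^+)^\top\bm S_0\uoneh^+$ at $x=0$ and $-(\utwoh^-)^\top\bm S_\ell\utwoh^-$ at $x=\ell$, i.e.\ $-B(t)$. I would first verify the identity for $\Gammamat|\Amat|$, then do the two endpoint expansions, which is the same bookkeeping as in \cite{Bleffert2025}.
\end{itemize}
Collecting all of the above yields \eqref{eq:disc_damp_eqn_final}.

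\textbf{Stability bound.} Bound the right-hand side by Cauchy--Schwarz and Young with $\Gammamat^{\pm1/2}$ as in the continuous proof. Drop nothing, and define $z(t)$ as $\engnorm{\uh}^2$ plus twice the integrated interior, damping and boundary dissipations. All of these are nonnegative, so $z(T)\le z(0)+\int_0^T z+\int_0^T\|\qext\|^2_{\Gammamat^{-1}}$, and Grönwall gives \eqref{eq:dg_final_bound}.
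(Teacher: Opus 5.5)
Your proposal is correct and follows the paper's proof essentially step for step: you test with $(\uh,\rtauh)$, integrate by parts elementwise, cancel the central part of the flux via the jump--average identity and the auxiliary interface terms via \eqref{eq:aux_trace_compatibility}, check that the homogeneous auxiliary traces kill the boundary coupling, contract the upwind boundary flux using $\Gammamat|\Amat|=\operatorname{diag}(\bm S_0,\bm S_\ell)$, and close with Young and Gr\"onwall; your route to $\Gammamat|\Amat|$ via $|\Amat|=(\Amat^2)^{1/2}$ and a $\Gammamat$-orthonormal eigenbasis is only a shortcut for the appendix's explicit $\bm T$ products. One bookkeeping slip: with $\jump{w}=w^--w^+$, summing $[\,\cdot\,]_{x_{i-1}}^{x_i}$ over elements gives $+\tfrac12\sum_{i\in\mathcal I_{\mathrm{int}}}\jump{\uh^\top\Pimat\uh}$ and $-\sum_{i\in\mathcal I_{\mathrm{int}}}\jump{\uoneh^\top\rtauh}$, which are the opposite of the signs you wrote, and these corrected signs are exactly what the cancellations you then claim require.
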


\begin{proof}
Choosing $\bm v_h=\uh$ and $\bm w_h=\rtauh$ in
\eqref{eq:global_1_hat}--\eqref{eq:global_2_hat} and adding the results yields
\begin{align}
\label{eq:combined_global}
\frac{1}{2}\frac{\mathrm{d}}{\mathrm{d}t} \engnorm{\uh}^2
+\sum_{i=1}^{N_c}\inproduct[\Omega_i]{\Hmat^{-1}\rtauh}{\rtauh}
=&\ \dlinformA{\uh}{\uh}
+\dlinformB{\bm r_h}{\uh}
+\sum_{i=1}^{N_c}\inproduct[\Omega_i]{\bm Q(\uh,\bm r_h)}{\uh}
\\ \notag
&\ +\dlinformC{\uoneh}{\rtauh}
-\sum_{i=1}^{N_c}\inproduct[\Omega_i]{\Emat^\top \uoneh}{\rtauh}
+\sum_{i=1}^{N_c}\inproduct[\Omega_i]{\Lone^\top(\uoneh)\flexmat\utwoh}{\rtauh},
\end{align}

Similar to the continuous analysis, we examine the contributions of the terms on the
right-hand side of~\eqref{eq:combined_global} separately, starting with the discrete form
$\dlinformA{\uh}{\uh}$ associated with the first-order spatial operator.

\paragraph{Contribution of the first-order spatial operator}
\label{par:first_order_contribution}

Recall that $\Pimat$ is symmetric and independent of $x$.
On each element $\Omega_i$, we have
\begin{equation}
(\Pimat \uh,\uh')_{\Omega_i}
= \int_{\Omega_i} (\Pimat \uh)^\top \uh'\,\mathrm{d}x
= \frac12 \int_{\Omega_i} \bigl(\uh^\top \Pimat \uh\bigr)'\,\mathrm{d}x,
\end{equation}
and hence, by the fundamental theorem of calculus,
\begin{equation}
(\Pimat \uh,\uh')_{\Omega_i}
= \frac12 \Bigl[\uh^\top \Pimat \uh\Bigr]_{x_{i-1}}^{x_i}.
\end{equation}
Summing over all elements yields interface and boundary contributions,
\begin{equation}
\sum_{i=1}^{N_c} (\Pimat \uh,\uh')_{\Omega_i}
=
\frac12 \sum_{i\in\mathcal I_{\mathrm{int}}} \jump{\uh^\top \Pimat \uh}\big|_{x_i}
+\frac12\,\externalboundary{\Pimat \uh}{\uh}.
\end{equation}
Using symmetry of $\Pimat$ and the identity
$\jump{\bm u^\top \Pimat \bm u}=2\,\interfacepoint[x_i]{\Pimat\avg{\bm u}}{\jump{\bm u}}$,
we obtain
\begin{equation}
\sum_{i=1}^{N_c} (\Pimat \uh,\uh')_{\Omega_i}
=
\sum_{i\in\mathcal I_{\mathrm{int}}}
\interfacepoint[x_i]{\Pimat\avg{\uh}}{\jump{\uh}}
+\frac12\,\externalboundary{\Pimat \uh}{\uh}.
\end{equation}

By definition,
\begin{equation}
\dlinformA{\uh}{\uh}
=
\sum_{i=1}^{N_c}(\Pimat\uh,\uh')_{\Omega_i}
-\sum_{i\in\mathcal I_{\mathrm{int}}}\interfacepoint[x_i]{\widehat{\Pimat \uh}}{\jump{\uh}}
-\externalboundary{\widehat{\Pimat \uh}}{\uh}.
\end{equation}
Substituting the previous decomposition gives
\begin{align}
\dlinformA{\uh}{\uh}
&=
-\sum_{i\in\mathcal I_{\mathrm{int}}}
\interfacepoint[x_i]{\widehat{\Pimat \uh}-\Pimat\avg{\uh}}{\jump{\uh}}
-\externalboundary{\widehat{\Pimat \uh}-\tfrac12 \Pimat\uh}{\uh}.
\end{align}

Finally, inserting the numerical flux yields
\begin{equation}
\label{eq:contr_discr_firstorder_form}
\dlinformA{\uh}{\uh}
=
-\sum_{i\in\mathcal I_{\mathrm{int}}}
\interfacepoint[x_i]{\frac{\sigma}{2}\,\Gammamat|\Amat|\,\jump{\uh}}{\jump{\uh}}
-\externalboundary{\widehat{\Pimat \uh}-\tfrac12 \Pimat\uh}{\uh}.
\end{equation}

\paragraph{Contribution of the auxiliary coupling terms}
\label{par:aux_coupling_contribution}

We now analyze the coupling contributions
$\dlinformB{\bm r_h}{\uh}$ and $\dlinformC{\uoneh}{\rtauh}$.
Using $\Pimat \bm r_h=[-\rtauh^\top,\,\zerovec{6}^\top]^\top$ and the
trace-compatibility identity~\eqref{eq:aux_trace_compatibility}, we obtain the
following exact cancellation of all interior contributions for both the
alternating pair~\eqref{eq:flux_aux} and the central BR1 pair.

\paragraph{Rewriting $\dlinformB{\bm r_h}{\uh}$}
By definition,
\begin{equation}
\dlinformB{\bm r_h}{\uh}
=
\sum_{i=1}^{N_c}(\Pimat\bm r_h,\uh')_{\Omega_i}
-\sum_{i\in\mathcal I_{\mathrm{int}}}\interfacepoint[x_i]{\widehat{\Pimat \bm r_h}}{\jump{\uh}}
-\externalboundary{\widehat{\Pimat \bm r_h}}{\uh}.
\end{equation}
Since $\Pimat\bm r_h=[-\rtauh^\top,\,\zerovec{6}^\top]^\top$, we have on each element
\begin{equation}
(\Pimat\bm r_h,\uh')_{\Omega_i}
=-(\rtauh,\uoneh')_{\Omega_i}.
\end{equation}
Moreover, at an interior interface \(x_i\), the auxiliary flux gives
\begin{equation}
\interfacepoint[x_i]{\widehat{\Pimat \bm r_h}}{\jump{\uh}}
=
\interfacepointsingle[x_i]{-\widehat{\rtauh}}{\jump{\uoneh}}.
\end{equation}
Hence,
\begin{equation}
\label{eq:aux_b_rewrite}
\dlinformB{\bm r_h}{\uh}
=
-\sum_{i=1}^{N_c}(\rtauh,\uoneh')_{\Omega_i}
+\sum_{i\in\mathcal I_{\mathrm{int}}}\interfacepointsingle[x_i]{\widehat{\rtauh}}{\jump{\uoneh}}
-\externalboundary{\widehat{\Pimat \bm r_h}}{\uh}.
\end{equation}

\paragraph{Adding $\dlinformC{\uoneh}{\rtauh}$ and cancelling interior terms}
Similarly,
\begin{equation}
\dlinformC{\uoneh}{\rtauh}
=
\sum_{i\in\mathcal I_{\mathrm{int}}}\interfacepointsingle[x_i]{\widehat{\uoneh}}{\jump{\rtauh}}
+\externalboundarysingle{\widehat{\uoneh}}{\rtauh}
-\sum_{i=1}^{N_c}(\uoneh,\rtauh')_{\Omega_i}.
\end{equation}
Combining with \eqref{eq:aux_b_rewrite} yields
\begin{align}
\dlinformB{\bm r_h}{\uh}&+\dlinformC{\uoneh}{\rtauh}
={}
-\sum_{i=1}^{N_c}\Bigl[(\rtauh,\uoneh')_{\Omega_i}+(\uoneh,\rtauh')_{\Omega_i}\Bigr]\\
&+\sum_{i\in\mathcal I_{\mathrm{int}}}\interfacepointsingle[x_i]{\widehat{\rtauh}}{\jump{\uoneh}}
+\sum_{i\in\mathcal I_{\mathrm{int}}}\interfacepointsingle[x_i]{\widehat{\uoneh}}{\jump{\rtauh}}
+\externalboundarysingle{\widehat{\uoneh}}{\rtauh}
-\externalboundary{\widehat{\Pimat \bm r_h}}{\uh}.
\end{align}
On each element $\Omega_i$, integration by parts gives
\begin{equation}
(\rtauh,\uoneh')_{\Omega_i}+(\uoneh,\rtauh')_{\Omega_i}
=\int_{\Omega_i}(\uoneh^\top\rtauh)'\,\mathrm{d}x
=\Bigl[\uoneh^\top\rtauh\Bigr]_{x_{i-1}}^{x_i}.
\end{equation}
Summing over all elements therefore yields
\begin{equation}
-\sum_{i=1}^{N_c}\Bigl[\uoneh^\top\rtauh\Bigr]_{x_{i-1}}^{x_i}
=
-\sum_{i\in\mathcal I_{\mathrm{int}}}\jump{\uoneh^\top\rtauh}\big|_{x_i}
-\externalboundarysingle{\uoneh}{\rtauh}.
\end{equation}
The trace-compatibility identity~\eqref{eq:aux_trace_compatibility} therefore
cancels the interior interface contributions exactly, and we obtain the
boundary-only result
\begin{equation}
\label{eq:contr_discr_coupling_forms}
\dlinformB{\bm r_h}{\uh}+\dlinformC{\uoneh}{\rtauh}
=
\externalboundarysingle{\widehat{\uoneh}}{\rtauh}
-\externalboundary{\widehat{\Pimat \bm r_h}}{\uh}
-\externalboundarysingle{\uoneh}{\rtauh}.
\end{equation}
Thus, exactly as in the continuous analysis, the auxiliary coupling contributes
only boundary terms to the discrete energy balance.

\paragraph{Contribution of the lower-order terms}
\label{par:lower_order_contribution}

The cancellations used in the continuous estimate carry over verbatim on the
semi-discrete level, since the same $L^2$ inner products are employed elementwise.
Proceeding as in the derivation of \eqref{eq:source_term_simplified} (using
\eqref{eq:q_def} and \eqref{eq:source_term_components}), we obtain
\begin{align}
\label{eq:contr_discr_lower_order_terms}
\sum_{i=1}^{N_c}\inproduct[\Omega_i]{\bm Q(\uh,\bm r_h)}{\uh}
=
-\sum_{i=1}^{N_c}\inproduct[\Omega_i]{\Lone^\top(\uoneh)\flexmat \utwoh}{\rtauh}
+\sum_{i=1}^{N_c}\inproduct[\Omega_i]{\Emat^\top \uoneh}{\rtauh}
+\sum_{i=1}^{N_c}\inproduct[\Omega_i]{\qext}{\uh}.
\end{align}

\paragraph{Combined energy equation}
\label{par:combined_energy_equation}

We now combine the contributions of the first-order operator
\eqref{eq:contr_discr_firstorder_form}, the auxiliary coupling
\eqref{eq:contr_discr_coupling_forms}, and the lower-order terms
\eqref{eq:contr_discr_lower_order_terms} in \eqref{eq:combined_global}.
After cancellation of the internal lower-order contributions, we obtain
\begin{align}
\label{eq:energy_contribution_with_fluxes}
\frac12 \frac{\mathrm{d}}{\mathrm{d}t} \engnorm{\uh}^2
&+
\sum_{i\in\mathcal I_{\mathrm{int}}}
\interfacepoint[x_i]{\frac{\sigma}{2}\,\Gammamat|\Amat|\,\jump{\uh}}{\jump{\uh}}
+
\sum_{i=1}^{N_c}\inproduct[\Omega_i]{\Hmat^{-1}\rtauh}{\rtauh}
+
\externalboundary{\widehat{\Pimat \uh}-\tfrac12 \Pimat \uh}{\uh}
\nonumber\\
&=
\sum_{i=1}^{N_c}\inproduct[\Omega_i]{\qext}{\uh}
+
\externalboundarysingle{\widehat{\uoneh}}{\rtauh}
-
\externalboundary{\widehat{\Pimat \bm r_h}}{\uh}
-
\externalboundarysingle{\uoneh}{\rtauh}.
\end{align}

This relation represents the semi-discrete analogue of the continuous energy balance.

\paragraph{Boundary treatment}
\label{par:boundary_treatment}

To close the discrete energy balance, we use the boundary states and traces in
\eqref{eq:homogeneous_outer_states}--\eqref{eq:homogeneous_aux_boundary_traces}.
They make the auxiliary coupling contribute no boundary term and make the
first-order operator produce dissipation.

\paragraph{Auxiliary coupling traces}
The physical-boundary traces are kept fixed for both auxiliary-interface
choices. They coincide with extending the global orientation
\eqref{eq:flux_aux} to the physical boundaries by treating the boundary state
as the missing exterior trace. For homogeneous data they are given by
\eqref{eq:homogeneous_aux_boundary_traces}.
Under the homogeneous-data assumption used in this section, the boundary
contribution in \eqref{eq:contr_discr_coupling_forms} cancels identically at
both ends. For instance, at $x=0$,
\begin{align}
\label{eq:diffusion_flux_bnd_left}
(\widehat{\uoneh})^\top\rtauh
-(\widehat{\Pimat\bm r_h})^\top\uh
-\uoneh^\top\rtauh
=
-(\Pimat\bm r_h^+)^\top\uh^+
-(\uoneh^+)^\top\rtauh^+
=
(\rtauh^+)^\top\uoneh^+-(\uoneh^+)^\top\rtauh^+=0,
\end{align}
and similarly at $x=\ell$.

\paragraph{First-order operator flux at the boundaries}
For the boundary treatment of the first-order spatial operator, we use the
Lax--Friedrichs flux \eqref{eq:flux_pi_u} with fully upwind dissipation
$\sigma=1$ and evaluate $\widehat{\Pimat\uh}$ using the outer states
\eqref{eq:outer_states}. Together with the
contraction identity from \ref{app:boundary_contribution_analysis},
this yields the boundary evaluations
\begin{align}
\label{eq:advection_flux_left}
\bigl(\widehat{\Pimat\uh}-\tfrac12\Pimat\uh\bigr)^\top \uh \Big|_{0}
&=
-\,(\uoneh^{+})^{\!\top}
\bm S_0\,\uoneh^{+}\Big|_{0},
\\
\label{eq:advection_flux_right}
\bigl(\widehat{\Pimat\uh}-\tfrac12\Pimat\uh\bigr)^\top \uh \Big|_{\ell}
&=
(\utwoh^{-})^{\!\top}
\bm S_\ell\,\utwoh^{-}\Big|_{\ell}.
\end{align}
Since the boundary operator
$\externalboundary{\widehat{\Pimat\uh}-\tfrac12\Pimat\uh}{\uh}$ evaluates the
right endpoint minus the left endpoint, \eqref{eq:advection_flux_left} and
\eqref{eq:advection_flux_right} contribute
\((\uoneh^+)^\top\bm S_0\uoneh^++(\utwoh^-)^\top\bm S_\ell\utwoh^-\).
Both terms are nonnegative because $\bm S_0$ and $\bm S_\ell$ are symmetric
positive definite. Consequently, the first-order boundary contribution is
dissipative.

\paragraph{Final energy--dissipation identity}
\label{par:final_energy_identity}

Substituting \eqref{eq:advection_flux_left} and \eqref{eq:advection_flux_right}
into \eqref{eq:energy_contribution_with_fluxes} yields
\eqref{eq:disc_damp_eqn_final}, which proves the energy identity.
Equation~\eqref{eq:disc_damp_eqn_final} is the semi-discrete counterpart of
\eqref{eq:combined_energy_bc_0}; the jump dissipation term reflects the presence of
element-wise discontinuities in the DG approximation.

\paragraph{Positive definiteness of dissipation terms}
\label{par:positivity_terms}

As shown in \ref{app:flux_matrices}, the matrix \(\Gammamat|\Amat|\)
decomposes into \(6\times6\) diagonal blocks,
\begin{align}
\label{eq:gamma_abs_a_block}
\Gammamat|\Amat|
=
\begin{bmatrix}
\bm S_0 & \zeromat{6}{6}\\
\zeromat{6}{6} & \bm S_\ell
\end{bmatrix}.
\end{align}
Since \(\bm S_0\) and \(\bm S_\ell\) are symmetric positive
definite, each diagonal block (and hence \(\Gammamat|\Amat|\)) is symmetric
positive definite. Therefore,
\begin{equation}
\sum_{i\in\mathcal I_{\mathrm{int}}}
\interfacepoint[x_i]{\tfrac{\sigma}{2}\,\Gammamat|\Amat|\,\jump{\uh}}{\jump{\uh}}\ge 0.
\end{equation}
Moreover, since \(\Hmat^{-1}\) is symmetric positive definite,
\begin{equation}
\sum_{i=1}^{N_c}\inproduct[\Omega_i]{\Hmat^{-1}\rtauh}{\rtauh}\ge 0.
\end{equation}
Finally, the boundary quadratic forms in \eqref{eq:disc_damp_eqn_final} are
nonnegative pointwise in time.

\paragraph{Energy bound including boundary traces}
\label{par:energy_bound_traces}

To derive a meaningful bound for non-zero external forces and moments,
we proceed analogously to the continuous case in
Section~\ref{sec:continuous_analysis} and assume that the external
data are bounded in the sense that
\begin{equation}
\int_0^T \|\qext(\cdot,t)\|_{\Gammamat^{-1}}^2\,\mathrm{d}t < \infty.
\end{equation}
By Cauchy--Schwarz in the \(\Gammamat\)-inner product and Young's inequality,
\begin{equation}
\sum_{i=1}^{N_c}\inproduct[\Omega_i]{\uh}{\qext}
\le
\frac12\engnorm{\uh}^2 + \frac12\|\qext\|_{\Gammamat^{-1}}^2.
\end{equation}
Then \eqref{eq:disc_damp_eqn_final} implies, for a.e.\ \(t\in(0,T)\),
\begin{align}
\label{eq:pointwise_bound}
\frac12\frac{\mathrm{d}}{\mathrm{d}t} \engnorm{\uh(\cdot,t)}^2
&+
\sum_{i\in\mathcal I_{\mathrm{int}}}
\interfacepoint[x_i]{\tfrac{\sigma}{2}\,\Gammamat|\Amat|\,\jump{\uh(\cdot,t)}}{\jump{\uh(\cdot,t)}}
+
\sum_{i=1}^{N_c}\inproduct[\Omega_i]{\Hmat^{-1}\rtauh(\cdot,t)}{\rtauh(\cdot,t)}
+ B(t)
\nonumber\\
&\le
\frac12\engnorm{\uh(\cdot,t)}^2
+\frac12\|\qext(\cdot,t)\|_{\Gammamat^{-1}}^2.
\end{align}
Integrating \eqref{eq:pointwise_bound} over \([0,T]\) yields
\begin{align}
\label{eq:integrated_bound}
\engnorm{\uh(\cdot,T)}^2
&+ 2\int_0^T
\sum_{i\in\mathcal I_{\mathrm{int}}}
\interfacepoint[x_i]{\tfrac{\sigma}{2}\,\Gammamat|\Amat|\,\jump{\uh(\cdot,t)}}{\jump{\uh(\cdot,t)}}
\,\mathrm{d}t
+ 2\int_0^T \sum_{i=1}^{N_c}\inproduct[\Omega_i]{\Hmat^{-1}\rtauh(\cdot,t)}{\rtauh(\cdot,t)}\,\mathrm{d}t
\nonumber\\
&\quad
+ 2\int_0^T B(t)\,\mathrm{d}t
\le
\engnorm{\uh(\cdot,0)}^2
+ \int_0^T \engnorm{\uh(\cdot,t)}^2\,\mathrm{d}t
+ \int_0^T \|\qext(\cdot,t)\|_{\Gammamat^{-1}}^2\,\mathrm{d}t.
\end{align}

Define
\begin{align}
\label{eq:zh_def}
\begin{split}
z_h(t) :=\;&
\engnorm{\uh(\cdot,t)}^2
+ 2\int_0^t
\sum_{i\in\mathcal I_{\mathrm{int}}}
\interfacepoint[x_i]{\tfrac{\sigma}{2}\,\Gammamat|\Amat|\,\jump{\uh(\cdot,s)}}{\jump{\uh(\cdot,s)}}\,\mathrm{d}s\\
&\;+
2\int_0^t \sum_{i=1}^{N_c}\inproduct[\Omega_i]{\Hmat^{-1}\rtauh(\cdot,s)}{\rtauh(\cdot,s)}\,\mathrm{d}s
+ 2\int_0^t B(s)\,\mathrm{d}s.
\end{split}
\end{align}
Then \(\int_0^t \engnorm{\uh(\cdot,s)}^2\,\mathrm{d}s
\le \int_0^t z_h(s)\,\mathrm{d}s\),
and \eqref{eq:integrated_bound} implies
\begin{equation}
z_h(T) \le z_h(0) + \int_0^T z_h(t)\,\mathrm{d}t
+ \int_0^T \|\qext(\cdot,t)\|_{\Gammamat^{-1}}^2\,\mathrm{d}t.
\end{equation}
An application of Grönwall's inequality yields
\eqref{eq:dg_final_bound}, completing the proof.
\end{proof}

This estimate bounds the semi-discrete energy and the cumulative interface,
material, and boundary dissipation on $[0,T]$ in terms of the initial state and
external forcing. It is a spatial, semi-discrete result; it does not by itself
establish convergence or an energy law for the time integrator.

\begin{remark}[Inhomogeneous boundary data]
\label{rem:inhomogeneous_bc_data}
The homogeneity of the boundary data is essential for the form
of~\eqref{eq:disc_damp_eqn_final}. Repeating the computation with the general
outer states~\eqref{eq:outer_states} adds the data terms
\begin{equation}
\bm\alpha^\top\bigl(\bm S_0\,\uoneh^{+}-\utwoh^{+}\bigr)\Big|_{x=0}
+\bigl(\bm u_{2,\mathrm{bc}}^\ell\bigr)^\top
\bigl(\uoneh^{-}+\bm S_\ell\,\utwoh^{-}\bigr)\Big|_{x=\ell}
\end{equation}
to the right-hand side, together with analogous contributions from the
data-carrying auxiliary traces
\(\widehat{\uoneh}(0)=\bm\alpha\) and
\(\widehat{\Pimat\bm r_h}(\ell)
=\Pimat[\zerovec{6}^\top,((\rtau)_{\mathrm{bc}}^\ell)^\top]^\top\).
These terms are linear in the data but couple to boundary traces of the
discrete solution. They are sign-indefinite and cannot be absorbed into the
available dissipative terms by completing the square, which is the same
obstruction encountered for the undamped intrinsic beam equations
in~\cite{Bleffert2025}. Consequently, the present calculation does not establish
an energy estimate for this inhomogeneous closure; such an estimate is deferred
to future work (cf.\ Section~\ref{sec:future_work}).
\end{remark}

In the next section, we assess implementation accuracy, observed stability,
and convergence in numerical experiments.

\section{Numerical Results}
\label{sec:numerical_analysis}
Having established discrete energy stability for the damped intrinsic beam
system (Problem~\ref{prob:initial_boundary_value}), we now verify the implementation
and assess the convergence behavior of the semi-discrete DG scheme within the simulation
framework \trixi~\cite{schlottke2021trixi,Ranocha2022}. The corresponding
LGL/SBP DGSEM realization is detailed in~\ref{app:dgsem}.
The source code and numerical data supporting the results are available from
the corresponding author upon reasonable request.

\subsection{Convergence Tests via the Method of Manufactured Solutions}
\label{subsec:mms}

To validate correctness and quantify convergence, we employ the \emph{method of manufactured
solutions} (MMS), which enables a systematic computation of the \emph{experimental order of
convergence} (EOC) by prescribing an exact solution and deriving the corresponding forcing
terms; see, e.g.,~\cite{Roache2002}.

\paragraph{Manufactured solution}
We use a straight reference beam, $\bm\kappa_0=\bm0$, on
$\Omega=[0,\ell]$ with $\ell=1$. Let
\begin{equation}
 \bm e:=(1,0,0,0,0,0)^\top,
 \qquad
 \bm q:=(3,2,3,1,1,1)^\top,
 \qquad
 \psi(x,t):=\exp(x+t).
\end{equation}
For this straight configuration, $\Emat=\Lone(\bm e)$ and
$(\Emat^\top)^2=0$. We manufacture
\begin{equation}
\label{eq:mms_physical_solution}
\bm u_{1,\mathrm{ex}}(x,t)
 =(\idmat{6}{6}+t\Emat^\top)\psi(x,t)\bm q,
\qquad
\flexmat\bm u_{2,\mathrm{ex}}(x,t)
 =\bm u_{1,\mathrm{ex}}(x,t)-2\bm e,
\qquad
\rtauex
 =\dampmat\dot{\bm u}_{2,\mathrm{ex}}.
\end{equation}
This elementary invariant construction satisfies the unforced lower six
equations rather than prescribing the two state blocks independently. The
intrinsic identities
\begin{equation}
\Lone^\top(\bm a)\bm a=\bm0,
\qquad
\Lone^\top(\bm a)\bm e=-\Emat^\top\bm a
\end{equation}
give the nonzero relation
\begin{equation}
\Lone^\top(\bm u_{1,\mathrm{ex}})
 \flexmat\bm u_{2,\mathrm{ex}}
=2\Emat^\top\bm u_{1,\mathrm{ex}}.
\end{equation}
Moreover, direct differentiation of~\eqref{eq:mms_physical_solution} gives
$\dot{\bm u}_{1,\mathrm{ex}}=(\bm u_{1,\mathrm{ex}})'
+\Emat^\top\bm u_{1,\mathrm{ex}}$. Consequently,
\begin{align}
\flexmat\dot{\bm u}_{2,\mathrm{ex}}
&=\dot{\bm u}_{1,\mathrm{ex}} \nonumber\\
&=(\bm u_{1,\mathrm{ex}})'
  -\Emat^\top\bm u_{1,\mathrm{ex}}
  +\Lone^\top(\bm u_{1,\mathrm{ex}})
   \flexmat\bm u_{2,\mathrm{ex}},
\label{eq:mms_zero_lower_source}
\end{align}
and, with $\rtauex=\dampmat\dot{\bm u}_{2,\mathrm{ex}}$,
the auxiliary relation is also satisfied identically:
\begin{equation}
\label{eq:mms_rtau_from_u}
\Hmat^{-1}\rtauex
=
(\bm u_{1,\mathrm{ex}})'
-\Emat^\top \bm u_{1,\mathrm{ex}}
+\Lone^\top(\bm u_{1,\mathrm{ex}})
 \flexmat \bm u_{2,\mathrm{ex}}.
\end{equation}
Thus $\rtauex$ is simultaneously the Kelvin--Voigt resultant
$\dampmat\dot{\bm u}_{2,\mathrm{ex}}$ and the field obtained from the
auxiliary equation; these are not two independent prescriptions. All twelve
primary components and all six viscous-resultant components are pointwise
active on the test domain. Both compatibility terms are nonzero, but their
displayed relation makes clear that this single MMS does not excite their
coefficients independently; a targeted member with $\lambda=3/2$ is retained
as a coefficient guard in the reproducibility tests.

\paragraph{Manufactured forcing}
Let $\bm Q_0$ denote the source in~\eqref{eq:q_def} without the external-load
term $\qext$.  We manufacture only the physical six-component vector of
distributed forces and moments:
\begin{equation}
\label{eq:mms_forcing_physical}
\bm f_{e,\mathrm{MMS}}
:=
\begin{bmatrix}\idmat{6}{6}&\zeromat{6}{6}\end{bmatrix}
\left(
\Gammamat\dot{\bm u}_{\mathrm{ex}}
+\Pimat\bm u_{\mathrm{ex}}'
+\Pimat\bm r_{\mathrm{ex}}'
-\bm Q_0(\bm u_{\mathrm{ex}},\bm r_{\mathrm{ex}})
\right),
\qquad
\bm q_{\mathrm{ext,MMS}}
:=
\begin{bmatrix}
\bm f_{e,\mathrm{MMS}}\\ \zerovec{6}
\end{bmatrix},
\end{equation}
where
$\bm r_{\mathrm{ex}}=[\zerovec{6}^\top,\rtauex^\top]^\top$.
Equation~\eqref{eq:mms_zero_lower_source} proves that the lower block of the
residual in parentheses is exactly zero.  No manufactured compatibility source
is introduced.  In the implementation this property is checked before every
convergence run; the MMS driver aborts if the lower-block residual exceeds
roundoff tolerance.

\paragraph{Boundary and initial data for MMS}
We prescribe the exact inhomogeneous split targets
\begin{equation}
\bm u_{1,\mathrm{bc}}^0(t)=\bm u_{1,\mathrm{ex}}(0,t),
\qquad
\bm u_{2,\mathrm{bc}}^\ell(t)=\bm u_{2,\mathrm{ex}}(\ell,t),
\qquad
\bm r_{\tau,\mathrm{bc}}^\ell(t)=\rtauex(\ell,t).
\end{equation}
Their sum is the prescribed total right mechanical resultant. The convergence
test therefore exercises the same nonhomogeneous left hyperbolic/gradient and
right hyperbolic/divergence boundary functions used by the implementation; it
is a code-verification problem, not an instance of the homogeneous-data energy
theorem. The initial state is $\bm u_{\mathrm{ex}}(x,0)$ from
\eqref{eq:mms_physical_solution}.

\paragraph{Material matrices and discretization parameters}
The compliance, mass, and damping matrices used in these tests are
\setlength{\arraycolsep}{2pt}
\renewcommand{\arraystretch}{0.85}
\begin{align}
\flexmat &=
\begin{pmatrix}
1.0 & 0.3 & 0.3 & 0.12 & 0.05 & 0.025 \\
0.3 & 2.0 & 0.47 & 0.2  & 0.12 & 0.05  \\
0.3 & 0.47& 3.0  & 0.35 & 0.2  & 0.12  \\
0.12& 0.2 & 0.35 & 4.0  & 0.34 & 0.2   \\
0.05& 0.12& 0.2  & 0.34 & 5.0  & 0.34  \\
0.025&0.05&0.12 & 0.2  & 0.34 & 6.0
\end{pmatrix},
&
\massmat &=
\begin{pmatrix}
1.84147 & 0.25403 & 0.34207 & 0.11081 & 0.05841 & 0.02270 \\
0.25403 & 2.84147 & 0.43782 & 0.22524 & 0.11081 & 0.05841 \\
0.34207 & 0.43782 & 3.84147 & 0.32702 & 0.22524 & 0.11081 \\
0.11081 & 0.22524 & 0.32702 & 4.84147 & 0.32161 & 0.22524 \\
0.05841 & 0.11081 & 0.22524 & 0.32161 & 5.84147 & 0.32161 \\
0.02270 & 0.05841 & 0.11081 & 0.22524 & 0.32161 & 6.84147
\end{pmatrix},
\\[4pt]
\dampmat &= 10^{-2}\flexmat.
\end{align}
The compliance and mass matrices are symmetric positive definite representative
(though non-physical) test data.  Proportional damping gives
$\Hmat=10^{-2}\idmat{6}{6}$ and reduces explicit parabolic stiffness while
retaining a nonzero Kelvin--Voigt contribution.

Convergence tests were performed for polynomial degrees $k=1,2,3$ on uniform meshes with
$N_c=4,8,16,32,64,128$ elements. Errors were evaluated at final time $T=1$.
The convergence campaign considers $k=1,2,3$.
Time integration uses the adaptive fourth-order stabilized ROCK4 method with
absolute and relative tolerances $10^{-12}$ in every standard convergence run.

We report $L^2(\Omega)$-errors at the final time $T=1$ for the $12$-component state $\bm u=(\bm u_1,\bm u_2)$. To present the results compactly, we plot errors aggregated over the two $6$-component blocks corresponding to the kinematic variables $\bm u_1$ and the sectional resultants $\bm u_2$.
To ensure that these averages do not conceal a poorly converging field, a
companion audit records \(L^2\)- and \(L^\infty\)-errors for all 12
components and for all six reconstructed Kelvin--Voigt resultants. For the
alternating cubic scheme, a matched tolerance-$10^{-13}$ fine-grid audit gives
$L^2$ EOCs in $[3.81,4.07]$ over the 12 primary components and in
$[3.81,4.02]$ over the six viscous-resultant components. On the finest mesh,
tightening the tolerance changes the two displayed block-average errors from
$(2.067863,\,3.689700)\times10^{-11}$ to
$(2.064336,\,3.689580)\times10^{-11}$. Thus neither component averaging nor
temporal error explains the reported primary cubic rates.


\begin{figure}
\centering
\includegraphics[width=0.95\linewidth]{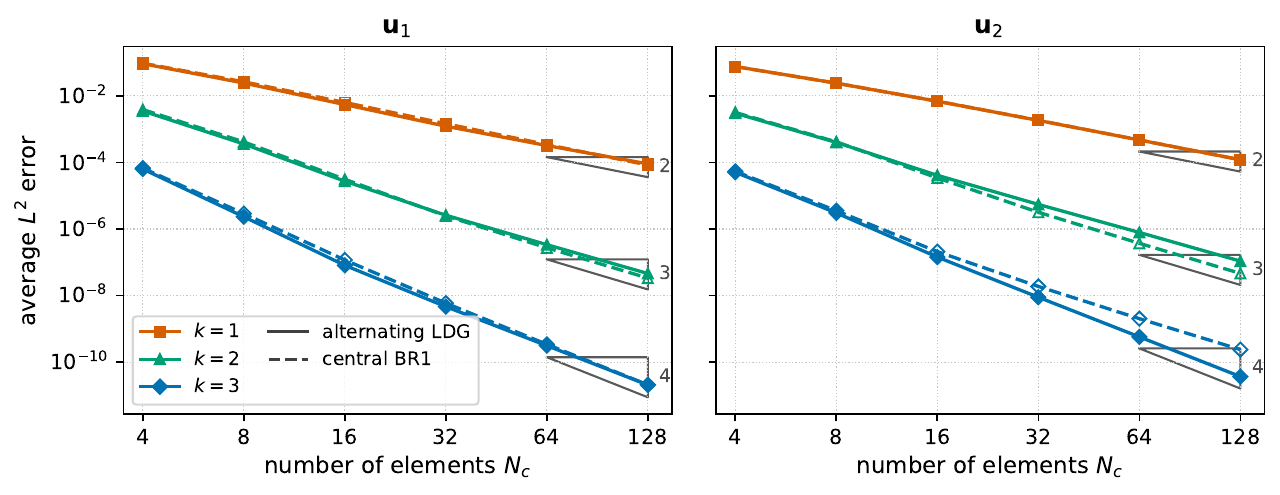}
\caption{Average $L^2$-errors at $T=1$ (arithmetic mean of the componentwise $L^2(\Omega)$ errors
within each $6$-component block) versus the number of elements $N_c$, for polynomial degrees
$k=1,2,3$: kinematic block $\bm u_1$ (left) and resultant block $\bm u_2$ (right).
The characteristic first-order flux is fully upwind (\(\sigma=1\)) in every
run. Solid lines use the alternating auxiliary traces~\eqref{eq:flux_aux};
dashed lines use central BR1 auxiliary traces as a control. Gray triangles
indicate the optimal slopes $k+1$.}
\label{fig:l2_error_plots}
\end{figure}

\paragraph{Discussion of results}
In Figure~\ref{fig:l2_error_plots}, the average $L^2$-errors resulting from our \trixi\, simulations
are depicted; the underlying error and EOC data are tabulated in~\ref{app:tabulated_convergence}.
The comparison changes only the Kelvin--Voigt auxiliary traces: the
manufactured fields, physical forcing, meshes, characteristic-upwind
first-order flux, time integrator, and tolerances are identical.  It therefore
isolates the auxiliary discretization rather than conflating it with the
central-versus-upwind choice for \(\widehat{\Pimat\uh}\).

For \(k=3\), alternating LDG restores near-fourth-order convergence in both
blocks. On the finest mesh, the EOCs are \(3.92\) for \(\bm u_1\) and
\(3.96\) for \(\bm u_2\). In contrast, BR1 retains fourth order in
\(\bm u_1\) but its \(\bm u_2\) EOC decreases to \(3.09\). Because the
comparison changes only the auxiliary closure, it attributes the observed
reduction in this MMS to the central auxiliary traces rather than to
characteristic upwinding or to the manufactured solution. For \(k=2\), the
alternating EOCs are \(2.88\) and \(2.86\), while the BR1 block EOCs are
\(3.02\) and \(3.00\). The componentwise audit nevertheless finds a slower
BR1 $m_2$ field: its finest-pair EOC is $2.60$ and rises to $2.78$ in a
targeted $N_c=256$ extension, so componentwise third order is not claimed for
that comparator. For \(k=1\), both closures give nearly second order in
\(\bm u_2\); the alternating \(\bm u_1\) EOC is \(1.83\) on
the finest pair, so the alternating choice should not be described as a
uniform accuracy improvement in every degree and block.

A rigorous optimal-order \textit{a priori} error analysis for the full coupled
nonlinear system is not yet available (cf.\ Section~\ref{sec:future_work}).
In particular, an alternating-flux proof requires one-sided Gauss--Radau
projections matched to the selected traces rather than the plain elementwise
\(L^2\) projection used for central auxiliary fluxes.  The rates reported
here are therefore experimental.

\subsection{Rotational Beam Experiment}  
\label{subsec:rotational_beam}
We next consider a transient spin-up experiment for a beam that is initially at rest and rotated through a prescribed angular velocity at its root. The imposed angular velocity is ramped from zero to a terminal value $\omega_{\mathrm{ter}}$ over a finite time interval and then held constant. This experiment serves two purposes: it probes the transient response of the full damped intrinsic beam system under rotational loading, and it provides a setting in which the long-time behavior can be compared with an analytic steady reference state. The setup is relevant, for example, to rotor-blade-type dynamics, where centrifugal effects induce axial stretching and damping drives relaxation toward an equilibrium configuration.
We visualize the beam's reference line, reconstructed from the intrinsic variables in a
post-processing step following~\cite{PatilHodges2006}.

Once the imposed angular velocity has reached the constant value $\omega_{\mathrm{ter}}$, the Kelvin--Voigt damped beam enters a terminal constant-spin regime and, in the simulations below, relaxes toward a time-independent state. For the straight beam configuration considered here, this terminal regime is described by a straight, axially stretched rotating branch. We therefore derive an analytic steady solution for this reduced branch and then compare it with numerical solutions of the full system obtained with \trixi\, from rest after the spin-up phase.
Here ``steady'' refers to the intrinsic variables and reconstructed geometry in
the root-attached, co-rotating frame; the corresponding inertial-frame
centerline continues to rotate.

\subsubsection{Setup of the Rotational Beam}

Consider a beam of length $\ell$, attached at $x=0$ and free at $x=\ell$. The root at $x=0$ is fixed in position and driven by a prescribed rotation about the $x_3$-axis, while the tip at $x=\ell$ is traction free.

To obtain a closed analytic reference for the terminal constant-spin regime, we restrict attention to the straight steady branch in which no bending or shear deformation is present and centrifugal loading produces only axial elongation. In this reduced configuration, only the components $f_1$, $v_2$, and $\omega_3$ are nonzero, while
\begin{equation}
f_2 \equiv f_3 \equiv m_1 \equiv m_2 \equiv m_3 \equiv v_1 \equiv v_3 \equiv \omega_1 \equiv \omega_2 \equiv 0.
\end{equation}
This reduction is used only to characterize the terminal steady state; the time-dependent simulations below solve the full damped intrinsic beam system without imposing this restriction.

With $\dot{\bm u}=0$, the reduced steady system reads
\begin{equation}
\label{eq:rot_beam_reduced_system}
\begin{aligned}
    f_1'(x) = -\,\massmat_{22}\,\omega_3\,v_2(x),
    \qquad
    v_2'(x) = \omega_3 + \flexmat_{11}\,\omega_3\,f_1(x),
    \qquad
    \omega_3'(x) &= 0,
\end{aligned}
\end{equation}
supplemented with boundary conditions
\begin{equation}
\label{eq:rot_beam_bc}
    f_1(\ell)=0, \qquad v_2(0)=0,\qquad \omega_3(0)=\omega_{\mathrm{ter}},
\end{equation}
so that $\omega_3\equiv \omega_{\mathrm{ter}}$ in the steady regime.

\subsubsection{Analytic Steady Solution and Critical Speed}
\label{subsubsec:rotational_beam_analytic}

We define the following coefficients:
\begin{equation}
a := \omega_{\mathrm{ter}}\sqrt{\massmat_{22}\flexmat_{11}},
\qquad
b := \sqrt{\frac{\flexmat_{11}}{\massmat_{22}}},
\qquad
c := \frac{1}{\flexmat_{11}\cos(a\ell)}.
\end{equation}
Then the system of ordinary differential equations in space, \eqref{eq:rot_beam_reduced_system}, with boundary conditions \eqref{eq:rot_beam_bc} admits the steady solution
\begin{equation}
f_1(x)=c\,\cos(a x)-\frac{1}{\flexmat_{11}},
\qquad
v_2(x)=c\,b\,\sin(a x),
\qquad
\omega_3\equiv\omega_{\mathrm{ter}}.
\end{equation}

Note that the factor $c$ becomes unbounded as $a\ell\to \pi/2$, so this reduced steady branch ceases to provide a regular reference state as the spin approaches its first singular threshold. 
We therefore introduce the (first) critical speed
\begin{equation}
\omega_{\mathrm{crit}}
:= \frac{\pi}{2\ell\sqrt{\massmat_{22}\flexmat_{11}}},
\end{equation}
so that $c\to\infty$ as $\omega_{\mathrm{ter}}\to\omega_{\mathrm{crit}}$.
We restrict ourselves to $\omega_{\mathrm{ter}}\ll \omega_{\mathrm{crit}}$ in the numerical experiments.

\begin{remark}
Using the definitions of \(\massmat_{22}\) and \(\flexmat_{11}\), one obtains
\begin{equation}
\frac{1}{\sqrt{\massmat_{22}\flexmat_{11}}}=\sqrt{\frac{E}{\rho}},
\end{equation}
where \(E\) is Young's modulus and \(\rho\) the density \cite{Kinsler2000}.
Writing \(c_e=\sqrt{E/\rho}\), the threshold satisfies
\(\omega_{\mathrm{crit}}\ell=(\pi/2)c_e\). Thus it is the dimensionless
rotation parameter \(\omega_{\mathrm{ter}}\ell/c_e\), not equality of the tip
and wave speeds, that reaches \(\pi/2\) when
\(\cos(a\ell)\to0\).
\end{remark}

A simulation initialized at rest, accelerated through the ramp, and then held at $\omega_{\mathrm{ter}}$ should converge to the analytic steady branch during the constant-angular-speed phase; this is assessed below.

\subsubsection{Numerical Simulation Protocol}
\label{subsubsec:rotational_beam_numerics}

We simulate a spin-up from rest by prescribing a time-dependent angular velocity at the left boundary, ramped linearly to $\omega_{\mathrm{ter}}$ over one second and held constant thereafter:
\begin{equation}
\omega_3(0, t)=
\begin{cases}
\omega_{\mathrm{ter}}\,t, & 0\le t\le 1,\\
\omega_{\mathrm{ter}}, & t>1.
\end{cases}
\end{equation}
The material matrices are chosen as
\begin{align}
    \flexmat=\bigl(\operatorname{diag}(10^{3},10^{3},&\,10^{3},500,500,500)\bigr)^{-1},
    \hspace{10ex}
    \massmat=\operatorname{diag}(1,1,1,20,10,10),
    \\[2ex]
    &\dampmat=\bigl(\operatorname{diag}(10^{3},10^{3},10^{3},10,10,10)\bigr)^{-1},
\end{align}
based on parameters from~\cite{HessePalacios2012,SimoVuQuoc1986} (with the stiffness modified according to~\cite{HessePalacios2012}).
For a beam with these material parameters, the critical angular speed is $\omega_{\mathrm{crit}}\approx12.4182$.
As the terminal angular speed, we choose $\omega_{\mathrm{ter}}=2.8523$, corresponding to
$a\ell=\pi/2-1.21$, well below $\omega_{\mathrm{crit}}$.
All transient runs use $\ell=4$, polynomial degree $k=3$, $N_c=8$
uniform cells, characteristic upwinding, the alternating auxiliary traces
in~\eqref{eq:flux_aux}, and the five-stage fourth-order
Carpenter--Kennedy method at CFL \(0.01\).  The baseline, doubled-damping, and
undamped comparison runs use \(T=40\); the consistency run initialized from
the analytic steady profile uses \(T=10\).  There are no distributed loads,
the initial curvature is zero, and the transient runs start from rest.

\subsubsection{Beam Dynamics and Results}
\label{subsubsec:rotational_beam_results}

During the ramp, boundary actuation injects energy and the free end lags the
root.  After the angular speed becomes constant, the beam undergoes damped
oscillations and approaches the straight, axially stretched branch derived
above.  For the baseline run, the analytic steady energy is
\(E_\star=264.3321651\).  The discrete energy rises from zero to
\(253.7525\) at the end of the ramp, reaches \(604.0835\) at \(t=1.7\),
and subsequently relaxes toward \(E_\star\); see
Figure~\ref{fig:rotating_validation}.

\begin{table}[t]
\centering
\caption{Quantitative rotating-branch validation for the \trixi\
baseline run at \(T=40\).  Relative \(L^\infty\) errors are normalized by
the maximum magnitude of the corresponding analytic field.}
\label{tab:rotating_validation}
\begin{tabular}{lc}
\toprule
Diagnostic & Value \\
\midrule
Relative \(L^\infty\)-error in \(f_1\)
& \(2.38\times10^{-5}\) \\
Relative \(L^\infty\)-error in \(v_2\)
& \(1.51\times10^{-5}\) \\
Relative energy error \(\lvert E_h-E_\star\rvert/E_\star\)
& \(2.74\times10^{-5}\) \\
Maximum relative semi-discrete ledger defect
& \(1.78\times10^{-15}\) \\
\bottomrule
\end{tabular}
\end{table}

\begin{figure}[t]
\centering
\includegraphics[width=\textwidth]{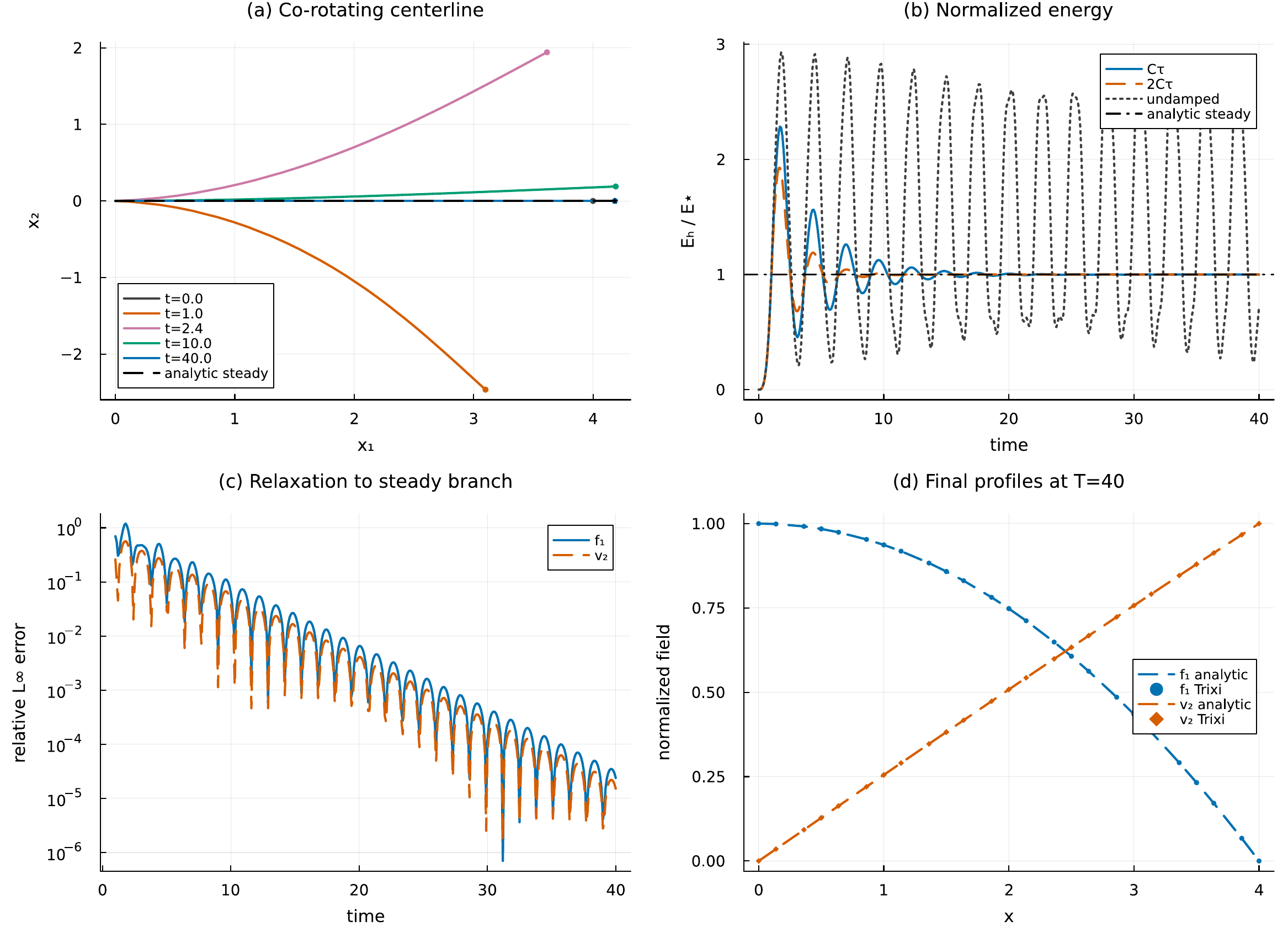}
\caption{\trixi\ rotating-beam validation.
(a) Co-rotating centerlines at the initial state, ramp end, first positive
post-ramp rebound, a late relaxation time, and final time; circles mark the
numerical tips and the star marks the analytic steady tip.  The vertical scale
is expanded to expose the transient lag.
(b) Discrete energy normalized by \(E_\star\) for the baseline,
doubled-damping, and singular undamped comparison.
(c) Relative \(L^\infty\)-errors against the analytic terminal branch after
the ramp.
(d) Normalized numerical and analytic profiles of \(f_1\) and \(v_2\) at
\(T=40\).}
\label{fig:rotating_validation}
\end{figure}

The final-time errors in Table~\ref{tab:rotating_validation} are not used in
isolation: over \(35\le t\le40\), the relative \(L^\infty\)-errors remain
below \(1.00\times10^{-4}\) in \(f_1\) and \(6.27\times10^{-5}\) in
\(v_2\).  Thus the agreement is not an artifact of sampling a single zero
crossing of the decaying oscillation.  Panel~(d) of
Figure~\ref{fig:rotating_validation} compares the complete final profiles
with the analytic branch.

\paragraph{Actuated energy ledger}
A constant prescribed angular velocity can continue to exchange work with
the beam, so this experiment is outside the homogeneous-boundary estimate of
Section~\ref{subsec:discrete_energy_stability}.  For the implemented outer
state, we evaluate the actuated semi-discrete identity in the form
\begin{equation}
\dot E_h+D_{\tau,h}+D_{\mathrm{jump},h}+D_{0,h}+D_{\ell,h}
=P_{\mathrm{root}}+P_{\mathrm{SAT}},
\end{equation}
where
\begin{equation}
P_{\mathrm{root}}
=-\bm\alpha^\top(\utwoh+\rtauh)\big|_{x=0},
\qquad
P_{\mathrm{SAT}}
=\bm\alpha^\top\bm S_0\uoneh\big|_{x=0}.
\end{equation}
Here \(P_{\mathrm{root}}\) is the physical actuator power, whereas
\(P_{\mathrm{SAT}}\) is the characteristic boundary-data contribution and
must not be interpreted as additional physical work.  Evaluating this
identity at every saved state gives the maximum relative defect reported in
Table~\ref{tab:rotating_validation}.  This checks the semi-discrete operator
at the computed states; it is not a fully discrete energy theorem for the
explicit Runge--Kutta method.

As a complementary fully discrete diagnostic, define the accumulated work and
dissipation channels by
\begin{equation}
W_{\mathrm{root}}:=\int_0^T P_{\mathrm{root}}\,\mathrm dt,
\qquad
W_{\mathrm{SAT}}:=\int_0^T P_{\mathrm{SAT}}\,\mathrm dt,
\qquad
\mathcal D_{j,h}:=\int_0^T D_{j,h}\,\mathrm dt,
\quad j\in\{\tau,\mathrm{jump},0,\ell\}.
\end{equation}
Trapezoidal accumulation at every accepted step gives root work
\(W_{\mathrm{root}}=377.935824\), partitioned into
\(\Delta E_h=264.324932\), material dissipation
\(\mathcal D_{\tau,h}=113.610851\), interface dissipation
\(\mathcal D_{\mathrm{jump},h}=1.07\times10^{-6}\), and the net characteristic
boundary correction
\(\mathcal D_{0,h}-W_{\mathrm{SAT}}=4.02\times10^{-5}\) (the free-tip term is
\(\mathcal D_{\ell,h}=2.91\times10^{-8}\)).  The resulting cumulative closure
residual has magnitude \(1.10\times10^{-7}\).  With the accumulated ledger
scale
\begin{equation}
S_{\mathrm{led}}
:=\max\!\left\{
1,\lvert\Delta E_h\rvert,
\sum_{j\in\{\tau,\mathrm{jump},0,\ell\}}\lvert\mathcal D_{j,h}\rvert
+\lvert W_{\mathrm{root}}\rvert+\lvert W_{\mathrm{SAT}}\rvert
\right\},
\end{equation}
the relative residual is \(2.40\times10^{-12}\); relative to the physical root
work alone, it is \(2.91\times10^{-10}\).  Unlike the pointwise defect, this
residual also contains time-integration and temporal-quadrature errors.

Doubling \(\dampmat\) shortens the oscillatory transient while preserving the
same analytic equilibrium.  With \(\dampmat=\zeromat{6}{6}\), the oscillations persist over
the reported interval.  The latter is a singular undamped comparison and is
not covered by the damped mixed estimate containing \(\Hmat^{-1}\).
Finally, a run initialized from the analytic profile at the terminal angular
speed remains at that branch up to discretization and time-integration error:
at \(T=10\), the relative \(L^\infty\)-errors are \(9.76\times10^{-8}\) in
\(f_1\) and \(6.91\times10^{-9}\) in \(v_2\), while the relative energy error
is \(4.59\times10^{-12}\).
An independent steady-initialized mesh check evolved to \(T=1\) on
\(N_c=4,8,16\) gives finest-pair \(L^\infty\)-EOCs of \(3.99\) for
\(f_1\) and \(3.73\) for \(v_2\).  At \(N_c=16\), the relative
\(L^\infty\)-errors in \(f_1\) and \(v_2\) are \(6.15\times10^{-9}\) and
\(5.21\times10^{-10}\), respectively.  The maximum relative instantaneous
residual of the semi-discrete energy identity is at roundoff on every mesh.

\FloatBarrier

\section{Conclusion}
\label{sec:conclusion}

In this work, we developed and analyzed an energy-stable discontinuous
Galerkin method for the Kelvin--Voigt damped intrinsic beam equations.
The main difficulty is the mixed space--time derivative introduced by the
strain-rate-dependent constitutive law, which does not fit directly into
standard hyperbolic or parabolic DG frameworks. To address this issue, we
introduced the auxiliary variable
\begin{equation}
    \rtau := \dampmat\,\dot{\utwo},
\end{equation}
which represents the rate-dependent contribution to the sectional force
and moment resultants. This yields a first-order-in-time mixed formulation
in which the hyperbolic part retains the structure of the undamped
intrinsic beam equations, while the damping enters through an algebraic
auxiliary relation.

For the reformulated system, we derived a mechanical energy--dissipation
identity and an \textit{a priori} stability bound under bounded external loads. The
semi-discrete DG method was designed to mimic this structure at the
discrete level. The characteristic upwind flux for the first-order spatial
operator provides interior jump dissipation, while complementary alternating
LDG traces for the auxiliary coupling cancel the corresponding inter-element
terms. For the stated homogeneous split cantilever closure, the characteristic
outer-state construction gives non-negative boundary contributions. The resulting discrete estimate controls the discrete
energy, the Kelvin--Voigt dissipation, the interior jump dissipation, and
the boundary dissipation in terms of the initial data and the external
forcing.

The numerical experiments support the theoretical findings. For the cubic
scheme with characteristic upwinding and alternating LDG traces, componentwise and
tighter-tolerance checks support near-fourth-order convergence of all twelve
primary variables. A controlled BR1 comparison shows that alternating
auxiliary traces remove the observed cubic resultant-block order loss.
The same comparison also shows that alternating traces are not
uniformly more accurate for every degree and block. In the rotational spin-up
experiment, the full nonlinear damped simulation relaxes toward the analytic
constant-spin branch while closing the actuated energy ledger; a separate
steady-initialized check establishes mesh convergence, and the undamped case
retains persistent oscillations.
Increasing the damping strength accelerates this relaxation, in agreement
with the dissipative structure of the model and the discretization.

Overall, the auxiliary-variable reformulation provides a convenient way
to treat Kelvin--Voigt damping in the intrinsic beam equations while
preserving the underlying energy structure. The resulting DG method
combines high-order accuracy, element locality, and a discrete energy
estimate, making it a suitable foundation for further developments toward
more realistic beam and rotor-blade simulations.

\section{Future Work}
\label{sec:future_work}

Several extensions remain for future work.

\paragraph{Analysis}
The continuous and semi-discrete stability results in
Sections~\ref{sec:continuous_analysis} and~\ref{subsec:discrete_energy_stability}
were derived under homogeneous boundary data. Extending the
analysis to the inhomogeneous velocity and sectional-resultant boundary
data would give a more comprehensive model; the obstruction is the
sign-indefinite boundary work identified in
Remark~\ref{rem:inhomogeneous_bc_data}. A fully discrete
energy analysis for practical time integrators is another important
direction. This includes explicit Runge--Kutta schemes for lightly damped
problems, IMEX methods when the Kelvin--Voigt term introduces stiffness,
and implicit schemes for strongly damped configurations. A complete
\textit{a priori} error analysis for the mixed hyperbolic--parabolic formulation
also remains open.  For the alternating pair, the natural route is a
flux-matched one-sided Gauss--Radau projection that annihilates the expensive
auxiliary face residuals; the LDG framework reviewed in~\cite{XuShu2010} and
the strongly-damped-wave-equation analysis~\cite{LarssonThomeeWahlbin1991}
are natural starting points.

\paragraph{Algorithms}
The present implementation uses uniform meshes and a fixed explicit
time-stepping strategy. Natural algorithmic extensions include
\(hp\)-adaptive refinement, locally implicit or IMEX time integration, and
more systematic treatment of stiffness arising from strong damping or
high-order spatial discretizations. Since the method is formulated in a
local DG framework and implemented in \trixi, it is also well suited for
future performance-oriented developments such as GPU acceleration and
distributed-memory parallelism.

\paragraph{Modelling and applications}
The numerical experiments considered representative beam configurations
designed to assess convergence, the analytic steady branch, and the energy balance.
Future studies should consider more realistic rotor-blade models with
anisotropic composite cross-sections, initial twist, pre-curvature, and
spatially varying material parameters. Comparisons with high-fidelity
three-dimensional finite-element simulations would provide a useful
validation benchmark. Another important direction is coupling the
structural solver to aerodynamic loads, control inputs, and multi-component
assemblies such as hubs or joined beam systems. The energy-stable
structure developed here provides a controlled starting point for such
coupled aeroelastic simulations.

\section*{CRediT authorship contribution statement}
\textbf{Shivam Sundriyal:} Conceptualization; Methodology; Software;
Validation; Visualization; Writing -- original draft; Writing -- review \&
editing; Investigation; Formal analysis.
\textbf{Christian Bleffert:} Conceptualization; Methodology; Supervision;
Writing -- review \& editing.
\textbf{Lukas Dreyer:} Conceptualization; Supervision; Methodology.
\textbf{Melven R\"ohrig-Z\"ollner:} Conceptualization; Supervision;
Writing -- review \& editing.
\textbf{Gregor Gassner:} Writing -- review \& editing; Supervision;
Conceptualization.
\textbf{Vadym Aizinger:} Conceptualization; Supervision;
Writing -- review \& editing; Methodology.

\section*{Declaration of competing interest}
The authors declare no competing financial interests or personal relationships
that could have influenced this work.


\appendix
\renewcommand{\theglobaltheorem}{\Alph{section}.\arabic{globaltheorem}}

\section{Sectional Material Matrices}
\label{app:matrices}

This appendix records the physical meaning and common block structure of the
sectional matrices used in the intrinsic beam model. The notation follows the
ordering used throughout the paper,
\begin{equation}
    \uone=\begin{bmatrix}\bm v\\ \bm\omega\end{bmatrix},
    \qquad
    \begin{bmatrix}\bm f\\ \bm m\end{bmatrix}
    =
    \begin{bmatrix}\text{sectional force}\\ \text{sectional moment}\end{bmatrix},
    \qquad
    \begin{bmatrix}\bm\gamma\\ \bm\kappa\end{bmatrix}
    =
    \begin{bmatrix}\text{strain}\\ \text{curvature}\end{bmatrix}.
\end{equation}

\begin{itemize}
\item \textbf{Sectional mass matrix}
\(\massmat(x)\in\R^{6\times 6}\):
\begin{equation}
    \massmat : [0,\ell] \to \R^{6\times 6}.
\end{equation}
The sectional mass matrix maps the linear and angular velocities of the
cross-section to the corresponding linear and angular momenta per unit length,
\begin{equation}
    \begin{bmatrix}\bm p\\ \bm h\end{bmatrix}
    =
    \massmat
    \begin{bmatrix}\bm v\\ \bm\omega\end{bmatrix}.
\end{equation}
It encodes the inertial properties of the beam and is assumed to be symmetric
positive definite. With the convention
\(\widetilde{\bm a}\bm b=\bm a\times\bm b\), it can be written in block form as
\begin{equation}
    \massmat
    =
    \begin{bmatrix}
        \bm M_1 & \bm M_2 \\
        \bm M_2^\top & \bm M_3
    \end{bmatrix}
    =
    \begin{bmatrix}
        \mu \idmat{3}{3} & -\mu \widetilde{\bm\zeta} \\
        \mu \widetilde{\bm\zeta} & \bm{\mathcal I}
    \end{bmatrix}.
    \label{eq:sectional_mass_matrix}
\end{equation}
Here \(\mu=\mu(x)>0\) is the mass per unit length,
\(\bm\zeta=\bm\zeta(x)\in\R^3\) is the position of the sectional mass center
relative to the chosen reference line, and
\(\bm{\mathcal I}=\bm{\mathcal I}(x)\in\R^{3\times3}\) is the sectional mass
moment of inertia tensor about the reference line. When the cross-section lies
in the \(x_2x_3\)-plane, this tensor is commonly written as
\begin{equation}
\bm{\mathcal I}
=
\begin{bmatrix}
    \mathcal I_{22}+\mathcal I_{33} & 0 & 0 \\
    0 & \mathcal I_{22} & \mathcal I_{23} \\
    0 & \mathcal I_{23} & \mathcal I_{33}
\end{bmatrix},
\end{equation}
where the sign convention for the product of inertia \(\mathcal I_{23}\) is
fixed by this definition. For a homogeneous prismatic beam whose reference line
passes through the sectional mass center and whose local \(x_2,x_3\) axes are
chosen as principal inertia axes, one has \(\bm\zeta=\zerovec{3}\) and
\(\mathcal I_{23}=0\). In this special case, \(\massmat\) is diagonal and
independent of \(x\).

\item \textbf{Sectional flexibility matrix}
\(\flexmat(x)\in\R^{6\times 6}\):
\begin{equation}
    \flexmat : [0,\ell] \to \R^{6\times 6}.
\end{equation}
The sectional flexibility, or compliance, matrix maps elastic
(rate-independent) sectional force and moment resultants to generalized strains
and curvatures. Denoting these elastic resultants by
\(\bm f_{\mathrm{el}}\) and \(\bm m_{\mathrm{el}}\), we have
\begin{equation}
    \begin{bmatrix}
        \bm\gamma \\
        \bm\kappa
    \end{bmatrix}
    =
    \flexmat
    \begin{bmatrix}
        \bm f_{\mathrm{el}} \\
        \bm m_{\mathrm{el}}
    \end{bmatrix}.
\end{equation}
In the notation used throughout this paper,
\([\bm f_{\mathrm{el}}^\top,\bm m_{\mathrm{el}}^\top]^\top=\utwo\). In the
Kelvin--Voigt model, the total sectional resultants additionally contain the
rate-dependent contribution \(\rtau\), so that
\([\bm f^\top,\bm m^\top]^\top=\utwo+\rtau\).
Equivalently, its inverse \(\bm K=\flexmat^{-1}\), when it exists, is the
sectional stiffness matrix. In this work, we assume that \(\flexmat(x)\) is
symmetric positive definite uniformly in \(x\). This excludes degenerate
idealizations such as perfectly inextensible or unshearable limits and is used
in the energy estimates.

The matrix is partitioned into \(3\times3\) blocks as
\begin{equation}
    \flexmat
    =
    \begin{bmatrix}
        \bm C_1 & \bm C_2 \\
        \bm C_2^\top & \bm C_3
    \end{bmatrix},
\end{equation}
where \(\bm C_1\) and \(\bm C_3\) are symmetric. The block \(\bm C_2\)
describes coupling between force-induced strains and moment-induced curvatures.

For a homogeneous prismatic isotropic beam, with the reference axis chosen
through the appropriate sectional center and the \(x_2,x_3\) axes aligned with
the principal bending axes, the extension, shear, torsion, and bending modes
decouple. In this special case, the flexibility matrix reduces to the diagonal
form
\begin{equation}
    \flexmat
    =
    \operatorname{diag}
    \left(
        \frac{1}{EA},
        \frac{1}{k_{s,2}GA},
        \frac{1}{k_{s,3}GA},
        \frac{1}{GJ},
        \frac{1}{EI_{22}},
        \frac{1}{EI_{33}}
    \right).
    \label{eq:sectional_flexibility_matrix}
\end{equation}
Here \(A\) is the cross-sectional area, \(E\) is Young's modulus, \(G\) is the
shear modulus, \(I_{22}\) and \(I_{33}\) are the principal second moments of
area, and \(k_{s,2}\) and \(k_{s,3}\) are shear correction factors. For circular
sections \(J=I_{22}+I_{33}\). More generally, \(J\) denotes the appropriate
Saint-Venant torsion constant; in engineering beam models it is sometimes
written as \(J=k_{s,1}(I_{22}+I_{33})\) with a torsional correction factor
\(k_{s,1}\).

\item \textbf{Kelvin--Voigt damping-coefficient matrix}
\(\dampmat(x)\in\R^{6\times 6}\).
The matrix governs the rate-dependent contribution to the sectional force and
moment resultants through the auxiliary variable
\(\rtau=\dampmat\dot{\utwo}\). No separate symmetry assumption is imposed on
\(\dampmat\). Instead, the effective Kelvin--Voigt operator
\(\Hmat=\dampmat\flexmat^{-1}\) is assumed uniformly symmetric positive
definite. This makes \(\dampmat=\Hmat\flexmat\) invertible and ensures that the
material dissipation density \(\rtau^\top\Hmat^{-1}\rtau\) is non-negative and
coercive.
\end{itemize}

\section{An Energy-Stable DGSEM in One Dimension}
\label{app:dgsem}
In this appendix, we briefly derive the Discontinuous Galerkin Spectral Element Method (DGSEM) discretization of the damped intrinsic beam equations that was implemented in \trixi\, to produce the numerical results in Section~\ref{sec:numerical_analysis}.
Moreover, we prove an energy-stability estimate for the resulting DGSEM discretization.
Our approach is based on \cite{Gassner2018}.
We start from the local semi-discrete formulation of the damped intrinsic beam equations obtained from the general DG discretization in Section~\ref{sec:discretization}.
For efficiency reasons, the local formulation will be transformed to a reference element.
The involved volume integrals on the reference element will be approximated by Legendre--Gauss--Lobatto quadratures.

The starting point to construct the DGSEM is the local semi-discrete formulation of the damped intrinsic beam equations on the element $\Omega_i$ (cf. Equation~\eqref{eq:local_semi_discrete_formulation}):
\begin{align}
\label{eq:appendix_local_semi_discrete_formulation_1}
    &\inproduct[\Omega_i]{\Gammamat \dot{\uh}}{\bm{v}_h}
    + \left[ \widehat{\Pimat\uh}^{\top}\bm{v}_h \right]_{x_{i-1}}^{x_{i}}
    - \inproduct[\Omega_i]{\Pimat \uh}{\bm{v}_h^\prime}
    + \left[ \widehat{\Pimat\bm{r}_h}^{\top}\bm{v}_h \right]_{x_{i-1}}^{x_{i}}
    - \inproduct[\Omega_i]{\Pimat \bm{r}_h}{\bm{v}_h^\prime} 
    = \inproduct[\Omega_i]{\bm Q(\uh,\bm{r}_h)}{\bm{v}_h},
    \\[1.25ex]
\label{eq:appendix_local_semi_discrete_formulation_2}
    &\inproduct[\Omega_i]{\Hmat^{-1}\rtauh}{\bm{w}_h}
    = \left[ \widehat{\uoneh}^{\top}\bm{w}_h \right]_{x_{i-1}}^{x_{i}}
    - \inproduct[\Omega_i]{\uoneh}{\bm{w}_h^\prime}
    - \inproduct[\Omega_i]{\Emat^\top \uoneh}{\bm{w}_h}
    + \inproduct[\Omega_i]{\Lone^\top(\uoneh) \flexmat \utwoh}{\bm{w}_h}.
\end{align}

All elements $\Omega_i$ of the partition $\mathcal{T}_h$ of the domain $\Omega$ are mapped onto a reference element $E:=[-1,1]$.
In particular, we use the affine transformation 
\begin{equation}
\label{eq:affine_mapping}
    x^{(i)}(\xi) = \frac{x_{i-1} + x_i}{2} + \frac{\Delta x_i}{2}\xi
\end{equation}
to map $\xi \in E$ to $x\in\Omega_i=[x_{i-1}, x_i]$.
Here, $\Delta x_i := x_i - x_{i-1}$ denotes the element length.
In practice, this has the advantage that the polynomial basis on $E$ and the quadrature weights can be reused across all elements.
For the solution restricted to a physical element $\Omega_i$, we introduce the notation
\begin{align}
    \uh^{(i)}(\xi,t)
    :=
    \left. \uh\left(x^{(i)}(\xi), t\right)\right|_{\xi\in E}.
\end{align}
The global numerical solution will continue to be referred to as $\uh$.

Under the change of variables, the local semi-discrete formulation~\eqref{eq:appendix_local_semi_discrete_formulation_1}--\eqref{eq:appendix_local_semi_discrete_formulation_2} for the element $\Omega_i$ reads
\begin{align}
\begin{split}
    \frac{\Delta x_i}{2}
    \inproduct[E]{\Gammamat \dot{\uh}^{(i)}}{\bm{v}_h^{(i)}}
    + \left[ \widehat{\Pimat\uh}^{\top}\bm{v}_h^{(i)} \right]_{x^{(i)}(-1)}^{x^{(i)}(1)}
    - \inproduct[E]{\Pimat \uh^{(i)}}{\bm{v}_h^{(i)\,\prime}}
    + \left[ \widehat{\Pimat\bm{r}_h}^{\top}\bm{v}_h^{(i)} \right]_{x^{(i)}(-1)}^{x^{(i)}(1)}
    - \inproduct[E]{\Pimat \bm{r}_h^{(i)}}{\bm{v}_h^{(i)\,\prime}} 
    \\
    = \frac{\Delta x_i}{2}
    \inproduct[E]{\bm Q\left(\uh^{(i)},\bm{r}_h^{(i)}\right)}{\bm{v}_h^{(i)}}&,
\end{split}
\\[2ex]
\begin{split}
    \frac{\Delta x_i}{2}
    \inproduct[E]{\Hmat^{-1}\rtauh^{(i)}}{\bm{w}_h^{(i)}}
    = 
    \left[ \widehat{\uoneh}^{\top}\bm{w}_h^{(i)} \right]_{x^{(i)}(-1)}^{x^{(i)}(1)}
    - \inproduct[E]{\uoneh^{(i)}}{\bm{w}_h^{(i)\,\prime}}
    - \frac{\Delta x_i}{2}
    \inproduct[E]{\Emat^\top \uoneh^{(i)}}{\bm{w}_h^{(i)}}
    \\
    + \frac{\Delta x_i}{2}
    \inproduct[E]{\Lone^\top\left(\uoneh^{(i)}\right) \flexmat \utwoh^{(i)}}{\bm{w}_h^{(i)}}&,
\end{split}
\end{align}
where we keep using $(\cdot)^\prime$ for the derivative with respect to the spatial variable, which is now $\xi$.

For the DGSEM, the solution and test functions are interpolated by polynomials of degree $k$ at the $k+1$ Legendre-Gauss-Lobatto (LGL) nodes, and all inner products are approximated by the corresponding LGL quadrature.
The associated discrete inner product is denoted by $\inproduct[E,k]{\cdot}{\cdot}$.
The following result connects this collocated realization to the exact-integral
DG analysis in Section~\ref{sec:discretization}.

\begin{corollary}[Energy stability of the LGL DGSEM]
\label{cor:dgsem_energy}
Under the coefficient, flux, and homogeneous-boundary hypotheses of
Theorem~\ref{thm:semidiscrete_energy}, the LGL-collocated DGSEM, using either
the alternating LDG or central BR1 auxiliary traces at interior interfaces and
\eqref{eq:homogeneous_aux_boundary_traces} at the physical boundaries,
satisfies the quadrature analogue of the energy--dissipation identity
\eqref{eq:disc_damp_eqn_final}. Here every element \(L^2\) inner product and
the associated energy and dual forcing norms are replaced by their LGL
quadrature counterparts.
It consequently satisfies the corresponding quadrature analogue of the
stability estimate~\eqref{eq:dg_final_bound}.
\end{corollary}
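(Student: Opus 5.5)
The plan is to repeat the proof of Theorem~\ref{thm:semidiscrete_energy} step by step, with every exact element integral replaced by the LGL quadrature $\inproduct[E,k]{\cdot}{\cdot}$ scaled by $\Delta x_i/2$. The one ingredient of the exact-integral proof that does not survive quadrature automatically is integration by parts. We replace it with the summation-by-parts (SBP) property of the LGL collocation. Let $\bm D$ be the LGL differentiation matrix and $\bm W=\operatorname{diag}(w_j)$ the quadrature weights. Then
\begin{equation*}
\bm W\bm D+(\bm W\bm D)^\top=\operatorname{diag}(-1,0,\dots,0,1),
\end{equation*}
exactly, because degree $k$ LGL quadrature integrates the degree $2k-1$ product $p\,q'$ exactly. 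Hence $\inproduct[E,k]{\bm a}{\bm b'}+\inproduct[E,k]{\bm a'}{\bm b}=[\bm a^\top\bm b]_{-1}^{1}$ holds for nodal polynomial vectors $\bm a,\bm b$, even when constant matrices are inserted (e.g.\ $\bm a=\Pimat\uh$). Following~\cite{Gassner2018}, I would state this first as the discrete identity that replaces the fundamental theorem of calculus on each element.

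Next I take $\bm v_h=\uh$ and $\bm w_h=\rtauh$ (as nodal interpolants) in the collocated scheme and add, which gives the quadrature analogue of \eqref{eq:combined_global}.

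\textbf{First-order operator.} Applying SBP with $\bm a=\bm b=\uh$ and the constant symmetric $\Pimat$ gives $\inproduct[E,k]{\Pimat\uh}{\uh'}=\tfrac12[\uh^\top\Pimat\uh]_{-1}^1$. The interface algebra leading to \eqref{eq:contr_discr_firstorder_form} then goes through unchanged. The flux and jump terms are pointwise evaluations, and LGL nodes include the endpoints, so these terms are identical to the exact case.

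\textbf{Auxiliary coupling.} SBP applied to $(\rtauh,\uoneh')+(\uoneh,\rtauh')$ yields the endpoint values of $\uoneh^\top\rtauh$. The trace identity \eqref{eq:aux_trace_compatibility} then cancels the interior terms as before, and the boundary terms vanish under \eqref{eq:homogeneous_aux_boundary_traces} exactly as in \eqref{eq:diffusion_flux_bnd_left}. The boundary evaluations \eqref{eq:advection_flux_left}--\eqref{eq:advection_flux_right} are pointwise, so they carry over verbatim and produce $B(t)$.

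The step I expect to require the most care is the lower-order and time-derivative terms, because the source is nonlinear. For these terms the key observation is that the LGL inner product is a weighted sum of \emph{pointwise} values, $\sum_j w_j\,(\cdot)(\xi_j)$. Every cancellation used for \eqref{eq:source_term_simplified} is a pointwise algebraic identity at each $x$:
\begin{itemize}
\item skew-symmetry of $\Bmat$;
\item $\Lone^\top(\bm c)\bm c=\zerovec{6}$;
\item property \eqref{eq:l_operator_properties_2};
\item transposition of $\Emat$.
\end{itemize}
These identities therefore hold node by node and survive the weighted sum. The source is evaluated by collocation at the nodes, so no aliasing from projection enters. For the same reason, $\inproduct[E,k]{\Gammamat\dot\uh}{\uh}=\tfrac12\tfrac{\mathrm d}{\mathrm dt}\inproduct[E,k]{\Gammamat\uh}{\uh}$ holds because $\Gammamat$ is time-independent and symmetric at each node. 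Likewise, the coupling terms $\inproduct[E,k]{\Emat^\top\uoneh}{\rtauh}$ and $\inproduct[E,k]{\Lone^\top(\uoneh)\flexmat\utwoh}{\rtauh}$ cancel node by node against their counterparts from $\bm Q$. This gives the quadrature version of \eqref{eq:disc_damp_eqn_final}. The energy norm and the $\Hmat^{-1}$ dissipation are positive because the weights $w_j>0$ and $\Gammamat,\Hmat^{-1}$ are SPD at the nodes. The Cauchy--Schwarz/Young bound holds nodewise in the $\Gammamat$-weighted quadrature inner product, and the Grönwall argument then repeats verbatim to give the analogue of \eqref{eq:dg_final_bound}.
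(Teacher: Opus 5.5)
Your proposal is correct and follows essentially the same route as the paper's proof. LGL summation-by-parts replaces integration by parts for the volume terms $\inproduct[E,k]{\Pimat\uh}{\uh'}$ and $\inproduct[E,k]{\uoneh}{\rtauh'}$; the source and coupling cancellations hold node by node because the quadrature is a positively weighted sum of pointwise algebraic identities; the interface and boundary terms are endpoint evaluations identical to those in Theorem~\ref{thm:semidiscrete_energy}; and the Cauchy--Schwarz/Young/Gr\"onwall step is repeated in the quadrature inner product. The only difference is cosmetic: the paper first writes an elementwise energy balance and then assembles and bounds, whereas you assemble directly and keep the jump and boundary dissipation terms, which gives the full quadrature identity that the corollary asserts.
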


\begin{proof}
First, we apply the energy method choosing, elementwise, \(\bm v_h^{(i)}=\uh^{(i)}\) and
\(\bm w_h^{(i)}=\rtauh^{(i)}\).
The resulting local energy equation then reads
\begin{align}
\label{eq:dgsem_local_semi_discrete_energy_1}
\begin{split}
    \frac{\Delta x_i}{4}\frac{\mathrm{d}}{\mathrm{d}t} \dengnorm{\uh^{(i)}}^2
    +
    \left[ \widehat{\Pimat\uh}^{\top}\uh \right]_{x^{(i)}(-1)}^{x^{(i)}(1)}
    -
    \inproduct[E,k]{\Pimat \uh^{(i)}}{\uh^{(i)\,\prime}}
    +
    \left[ \widehat{\Pimat\bm{r}_h}^{\top}\uh \right]_{x^{(i)}(-1)}^{x^{(i)}(1)}
    -
    \inproduct[E,k]{\Pimat \bm{r}_h^{(i)}}{\uh^{(i)\,\prime}}
    \\=
    \frac{\Delta x_i}{2}\inproduct[E,k]{\bm Q\left(\uh^{(i)},\bm{r}_h^{(i)}\right)}{\uh^{(i)}},&
\end{split}
    \\[2ex]
\begin{split}
\label{eq:dgsem_local_semi_discrete_energy_2}
    \frac{\Delta x_i}{2}
    \inproduct[E,k]{\Hmat^{-1}\rtauh^{(i)}}{\rtauh^{(i)}}
    =
    \left[ \widehat{\uoneh}^{\top}\rtauh \right]_{x^{(i)}(-1)}^{x^{(i)}(1)}
    -
    \inproduct[E,k]{\uoneh^{(i)}}{\rtauh^{(i)\,\prime}}
    -
    \frac{\Delta x_i}{2} \inproduct[E,k]{\Emat^\top \uoneh^{(i)}}{\rtauh^{(i)}}
    \\+
    \frac{\Delta x_i}{2}\inproduct[E,k]{\Lone^\top\left(\uoneh^{(i)}\right) \flexmat \utwoh^{(i)}}{\rtauh^{(i)}}.&
\end{split}
\end{align}
Here,
\begin{equation}
\dengnorm{\bm z}^{2}
:=
\inproduct[E,k]{\Gammamat \bm z}{\bm z},
\end{equation}
and we use that \(\Gammamat\) is symmetric and time-independent.

When passing to the DGSEM, all quantities entering the discrete inner products are evaluated at the LGL nodes and represented by their nodal interpolants in \([P_k(E)]^d\), with \(d=6\) or \(d=12\). Since interpolation and quadrature nodes coincide, we suppress the interpolation operator in the notation. The cancellations used below are understood as nodal algebraic cancellations in the LGL quadrature inner product.

For the volume terms 
\begin{align}
    \inproduct[E,k]{\Pimat \uh^{(i)}}{\uh^{(i)\,\prime}}
\end{align}
in the first equation and
\begin{align}
    \inproduct[E,k]{\uoneh^{(i)}}{\rtauh^{(i)\,\prime}}
\end{align}
in the second equation, we exploit the \emph{summation-by-parts} property of the Legendre--Gauss--Lobatto quadrature \cite{Kopriva2010,gassner2013skew}.
In particular, we have
\begin{align}
    \inproduct[E,k]{\Pimat \uh^{(i)}}{\uh^{(i)\,\prime}}
    &=
    \frac{1}{2}\left[ \uh^\top \Pimat\uh \right]_{x^{(i)}(-1)}^{x^{(i)}(1)},
    \\[2ex]
     \inproduct[E,k]{\uoneh^{(i)}}{\rtauh^{(i)\,\prime}}
     &= 
     \left[ \uoneh^\top\rtauh \right]_{x^{(i)}(-1)}^{x^{(i)}(1)} - \inproduct[E,k]{\uoneh^{(i)\,\prime}}{\rtauh^{(i)}}.
\end{align}
Substituting this back into equations~\eqref{eq:dgsem_local_semi_discrete_energy_1}--\eqref{eq:dgsem_local_semi_discrete_energy_2} yields
\begin{align}
\begin{split}
    \frac{\Delta x_i}{4}\frac{\mathrm{d}}{\mathrm{d}t} \dengnorm{\uh^{(i)}}^2
    +
    \left[ \left(\widehat{\Pimat\uh} - \frac{1}{2}\Pimat\uh\right)^\top\uh \right]_{x^{(i)}(-1)}^{x^{(i)}(1)}
    +
    \left[ \widehat{\Pimat\bm{r}_h}^{\top}\uh \right]_{x^{(i)}(-1)}^{x^{(i)}(1)}
    -
    \inproduct[E,k]{\Pimat \bm{r}_h^{(i)}}{\uh^{(i)\,\prime}}
    \\=
    \frac{\Delta x_i}{2}\inproduct[E,k]{\bm Q\left(\uh^{(i)},\bm{r}_h^{(i)}\right)}{\uh^{(i)}},&
\end{split}
    \\[2ex]
\begin{split}
    \frac{\Delta x_i}{2}
    \inproduct[E,k]{\Hmat^{-1}\rtauh^{(i)}}{\rtauh^{(i)}}
    =
    \left[ \left(\widehat{\uoneh} - \uoneh\right)^\top\rtauh \right]_{x^{(i)}(-1)}^{x^{(i)}(1)}
    +
    \inproduct[E,k]{\uoneh^{(i)\,\prime}}{\rtauh^{(i)}}
    -
    \frac{\Delta x_i}{2} \inproduct[E,k]{\Emat^\top \uoneh^{(i)}}{\rtauh^{(i)}}
    \\+
    \frac{\Delta x_i}{2}\inproduct[E,k]{\Lone^\top\left(\uoneh^{(i)}\right) \flexmat \utwoh^{(i)}}{\rtauh^{(i)}}.&
\end{split}
\end{align}
Adding up the two equations and rearranging terms leads to the following local energy equation:
\begin{align}
\label{eq:dgsem_local_energy_conservation}
\begin{split}
    \frac{\Delta x_i}{4}\frac{\mathrm{d}}{\mathrm{d}t} \dengnorm{\uh^{(i)}}^2
    +
    &\frac{\Delta x_i}{2}\inproduct[E,k]{\Hmat^{-1}\rtauh^{(i)}}{\rtauh^{(i)}}
    \\=
    &-
    \left[ \left(\widehat{\Pimat\uh} - \frac{1}{2}\Pimat\uh\right)^\top\uh \right]_{x^{(i)}(-1)}^{x^{(i)}(1)}
    -
    \left[ \widehat{\Pimat\bm{r}_h}^{\top}\uh \right]_{x^{(i)}(-1)}^{x^{(i)}(1)}
    \\
    &+
    \left[ \left(\widehat{\uoneh} - \uoneh\right)^{\top}\rtauh \right]_{x^{(i)}(-1)}^{x^{(i)}(1)}
    +
    \frac{\Delta x_i}{2}\inproduct[E,k]{\qext^{(i)}}{\uh^{(i)}}.
    \end{split}
\end{align}
As in the continuous case (cf. Equation~\eqref{eq:source_term_simplified}), the source term together with the terms
\begin{align}
    -\frac{\Delta x_i}{2}
    \inproduct[E,k]{\Emat^\top \uoneh^{(i)}}{\rtauh^{(i)}},
    \hspace{10ex}
    +\frac{\Delta x_i}{2}
    \inproduct[E,k]{\Lone^\top\left(\uoneh^{(i)}\right) \flexmat \utwoh^{(i)}}{\rtauh^{(i)}}
\end{align}
reduce to the external forcing $\frac{\Delta x_i}{2}\inproduct[E,k]{\qext}{\uh^{(i)}}$. The cancellation relies on the symmetry properties~\eqref{eq:l_operator_properties_1}--\eqref{eq:l_operator_properties_2} of $\Lone$ and $\Ltwo$, which hold pointwise and therefore remain valid in the LGL collocation inner product.
Moreover, the terms 
\begin{align}
    -\inproduct[E,k]{\Pimat \bm{r}_h^{(i)}}{\uh^{(i)\,\prime}},
    \hspace{10ex}
    \inproduct[E,k]{\uoneh^{(i)\,\prime}}{\rtauh^{(i)}}
\end{align}
cancel out each other due to the structure of $\Pimat$ and the definition of $\bm{r}_h$.

Equation~\eqref{eq:dgsem_local_energy_conservation} proves an expected element-wise energy balance: within one element, apart from damping, energy changes only due to the presence of external forces and moments or through the boundaries of the element.

On assembly, the auxiliary-interface contribution vanishes by
\eqref{eq:aux_trace_compatibility}; hence the DGSEM contraction covers both
alternating LDG and central BR1 at interior interfaces. Both use the same
data-carrying physical-boundary traces from
Paragraph~\ref{par:boundary_treatment}, rather than central boundary averages.

We continue by summing the element-wise energy contribution over all elements to obtain the global energy of the solution.
The global semi-discrete energy equation reads
\begin{align}
\label{eq:dgsem_total_energy_raw}
\begin{split}
    &\frac{1}{2}\frac{\mathrm{d}}{\mathrm{d}t} \sum_{i=1}^{N_c}
    \frac{\Delta x_i}{2} \dengnorm{\uh^{(i)}}^2
    \\=
    &-
    \sum_{i=1}^{N_c}
    \frac{\Delta x_i}{2} \inproduct[E,k]{\Hmat^{-1}\rtauh^{(i)}}{\rtauh^{(i)}}
    -
    \left[
    \left( \widehat{\Pimat\uh}
    -
    \frac{1}{2}\Pimat\uh\right)^\top\uh
    \right]_{x^{(1)}(-1)}^{x^{(N_c)}(1)}
    -
    \sum_{i\in\mathcal{I}_{\mathrm{int}}}
    \interfacepoint[x_i]
    {\frac{\sigma}{2}\Gammamat|\Amat|\jump{\uh}}
    {\jump{\uh}}
    \\
    &+
    \left[
    \widehat{\uoneh}^\top\rtauh
    \right]_{x^{(1)}(-1)}^{x^{(N_c)}(1)}
    -
    \left[
    \widehat{\Pimat\bm{r}_h}^\top \uh
    \right]_{x^{(1)}(-1)}^{x^{(N_c)}(1)}
    -
    \left[
    \uoneh^\top \rtauh
    \right]_{x^{(1)}(-1)}^{x^{(N_c)}(1)}
    +
    \sum_{i=1}^{N_c}
    \frac{\Delta x_i}{2}
    \inproduct[E,k]{\uh^{(i)}}{\qext^{(i)}}.
\end{split}
\end{align}
where we proceeded analogously to the energy analysis in the semi-discrete context (cf. Section~\ref{subsec:discrete_energy_stability}).

The matrix $\Hmat$ is positive definite, so that
\begin{equation}
    -\inproduct[E,k]{\Hmat^{-1}\rtauh^{(i)}}{\rtauh^{(i)}} \leq 0,
    \hspace{4ex}
    i\in\{1,\ldots, N_c\}.
\end{equation}
Furthermore, in Section~\ref{sec:discretization} it was shown that for the numerical flux at the physical boundaries, which we chose in this paper, we have
\begin{align}
    \left[
    \left( \widehat{\Pimat\uh}
    -
    \frac{1}{2}\Pimat\uh\right)^\top\uh
    \right]_{x^{(1)}(-1)}^{x^{(N_c)}(1)}
    \geq
    0
\end{align}
(cf. Equations~\eqref{eq:advection_flux_left}--~\eqref{eq:advection_flux_right}) and
\begin{align}
    \left[
    \left( \widehat{\uoneh} \right)^\top\rtauh
    \right]_{x^{(1)}(-1)}^{x^{(N_c)}(1)}
    -
    \left[
    \widehat{\Pimat\bm{r}_h}^{\top} \uh
    \right]_{x^{(1)}(-1)}^{x^{(N_c)}(1)}
    -
    \left[
    \uoneh^\top \rtauh
    \right]_{x^{(1)}(-1)}^{x^{(N_c)}(1)}
    =
    0
\end{align}
(cf. Equation~\eqref{eq:diffusion_flux_bnd_left} and the analogous identity at $x=\ell$).
Combining these estimates yields
\begin{align}
    \frac{1}{2}\frac{\mathrm{d}}{\mathrm{d}t} \sum_{i=1}^{N_c} 
    \frac{\Delta x_i}{2} \dengnorm{\uh^{(i)}}^2
    \leq
    \sum_{i=1}^{N_c}
    \frac{\Delta x_i}{2}\inproduct[E,k]{\uh^{(i)}}{\qext^{(i)}}
\end{align}
which is the differential energy inequality for bounded external sources
(cf.~\cite{Bleffert2025}). Applying the same weighted Cauchy--Schwarz, Young,
and Grönwall argument as in
\eqref{eq:pointwise_bound}--\eqref{eq:dg_final_bound}, now in the LGL quadrature
inner product, gives the asserted quadrature stability estimate and completes
the proof.
\end{proof}

\section{Reformulation as a Nonlinear Advection--Diffusion System}
\label{app:nonlinear_advection_diffusion}

For completeness, we provide the explicit matrix operators appearing in the reformulated system~\eqref{eq:advection_diffusion_eqn}. Throughout this appendix the coefficient matrices $\Emat$, $\flexmat$, $\Hmat=\dampmat\flexmat^{-1}$ are assumed independent of $x$; spatially varying coefficients generate additional lower-order terms involving their derivatives, which we omit here. Substituting $\dampmat\dot{\utwo}=\Hmat\big(\uone'-\Emat^\top\uone+\Lone^\top(\uone)\flexmat\utwo\big)$ (which follows from~\eqref{eq:damped_equation_2}) into~\eqref{eq:damped_equation_1}, expanding the spatial derivative by the product rule, and collecting terms with the help of the identities $\Lone^\top(\bm a)\bm b=-\Lone^\top(\bm b)\bm a$ and $\Ltwo(\bm a)\bm b=-\Lone(\bm b)\bm a$ rewrites the damped intrinsic beam equations in the form
\begin{equation}
    \Gammamat\,\dot{\bm{u}}
    =
    \mathcal{D}\,\bm{u}^{\prime\prime}
    + \mathcal{A}(\bm{u})\,\bm{u}^\prime
    + \mathcal{S}(\bm{u})\,\bm{u}
    +
    \begin{bmatrix}
        \bm{f}_e\\[0.2em]
        \zerovec{6}
    \end{bmatrix}.
\end{equation}
The operators $\mathcal{D}$, $\mathcal{A}(\bm{u})$, and $\mathcal{S}(\bm{u})$ are defined by
\begin{align}
\mathcal{D} &=
\begin{bmatrix}
    \Hmat & \zeromat{6}{6}\\
    \zeromat{6}{6} & \zeromat{6}{6}
\end{bmatrix}, \\[0.8em]
\mathcal{A}(\bm{u}) &=
\begin{bmatrix}
    \Emat\Hmat + \Lone(\flexmat \utwo)\Hmat
    - \Hmat\Emat^\top - \Hmat\Lone^\top(\flexmat\utwo)
    &
    \idmat{6}{6} + \Hmat \Lone^\top(\uone)\flexmat \\[0.4em]
    \idmat{6}{6}
    &
    \zeromat{6}{6}
\end{bmatrix}, \\[0.8em]
\mathcal{S}(\bm{u}) &=
\begin{bmatrix}
    -\Emat\Hmat\Emat^\top - \Lone(\uone)\massmat
    - \Lone(\flexmat\utwo)\Hmat\Emat^\top &
    \Emat + \Emat\Hmat \Lone^\top(\uone)\flexmat
    - \Ltwo(\utwo)\flexmat
    + \Lone(\flexmat\utwo)\Hmat\Lone^\top(\uone)\flexmat \\[0.4em]
    -\Emat^\top &
    \Lone^\top(\uone)\flexmat
\end{bmatrix}\!.
\end{align}

Here, $\mathcal{D}$ collects the diffusive contributions induced by Kelvin--Voigt damping, $\mathcal{A}(\bm{u})$ contains the nonlinear transport-like terms associated with spatial gradients, and $\mathcal{S}(\bm{u})$ represents nonlinear source contributions. Although no physical advection is present, this structure reveals that the governing equations possess the analytical form of a nonlinear hyperbolic--parabolic system, closely resembling advection-diffusion models from a mathematical perspective.

\section{Computations of Flux Matrices}
\label{app:flux_matrices}

In this appendix, we collect the algebraic derivations underlying the flux splitting,
the weighted flux matrices, and the boundary stability terms used in the discrete
energy analysis.

\subsection{Eigendecomposition}
As defined in Section~\ref{subsec:numerical_flux}, we consider the matrix
\begin{equation}
    \Psimat = \flexmat^{-1/2}\massmat^{-1}\flexmat^{-1/2}.
\end{equation}
Since both \(\flexmat\) and \(\massmat\) are symmetric positive definite, it follows that
\(\Psimat\) is symmetric positive definite as well. Indeed, for any nonzero
\(\bm{z} \in \R^{6}\),
\begin{equation}
    \bm{z}^\top \Psimat \bm{z}
    = \bm{z}^\top \flexmat^{-1/2} \massmat^{-1} \flexmat^{-1/2} \bm{z}
    = (\flexmat^{-1/2} \bm{z})^\top \massmat^{-1} (\flexmat^{-1/2} \bm{z})
    > 0.
\end{equation}

Hence all eigenvalues of \(\Psimat\) are strictly positive, and there exists an
orthogonal matrix \(\bm{\mathcal{X}} \in \R^{6\times 6}\) and a diagonal
matrix \(\Lambda_\psi \in \R^{6\times 6}\) with positive entries
such that the spectral decomposition
\begin{equation}
    \Psimat = \bm{\mathcal{X}}^\top \Lambda_\psi^2 \bm{\mathcal{X}}
    \label{eq:psi}
\end{equation}
holds.

Based on this diagonalization, we construct the transformation matrices used to
diagonalize the matrix \(\Amat\) (see Section~\ref{subsec:bc}):
\begin{equation}
    \bm{T} \coloneqq
    \begin{bmatrix}
        \flexmat^{1/2} \bm{\mathcal{X}}^\top & \flexmat^{1/2} \bm{\mathcal{X}}^\top \\
        \flexmat^{-1/2} \bm{\mathcal{X}}^\top \Lambda_\psi^{-1} &
        -\flexmat^{-1/2} \bm{\mathcal{X}}^\top \Lambda_\psi^{-1}
    \end{bmatrix},
    \qquad
    \bm{T}^{-1} \coloneqq \frac{1}{2}
    \begin{bmatrix}
        \bm{\mathcal{X}} \flexmat^{-1/2} &
        \Lambda_\psi \bm{\mathcal{X}} \flexmat^{1/2} \\
        \bm{\mathcal{X}} \flexmat^{-1/2} &
        -\Lambda_\psi \bm{\mathcal{X}} \flexmat^{1/2}
    \end{bmatrix}.
\end{equation}

\subsection{Characteristic Flux Splitting}
The matrices \(\Amat^+\) and \(\Amat^-\) arise from the spectral decomposition of the matrix \(\Amat\) and represent its decomposition into contributions associated with positive and negative characteristic speeds, respectively. For this reason, we refer to them as the \emph{positive and negative characteristic flux matrices}. They are defined through the similarity transformations
\begin{equation}
\Amat^+ = \bm{T}\,\mathbf{\Lambda}^+\,\bm{T}^{-1},
\qquad
\Amat^- = \bm{T}\,\mathbf{\Lambda}^-\,\bm{T}^{-1},
\end{equation}
where \(\mathbf{\Lambda}=\operatorname{diag}(-\Lambda_\psi,\Lambda_\psi)\) collects the eigenvalues of \(\Amat\), and \(\mathbf{\Lambda}^+=\operatorname{diag}(\zeromat{6}{6},\Lambda_\psi)\), \(\mathbf{\Lambda}^-=\operatorname{diag}(-\Lambda_\psi,\zeromat{6}{6})\) contain its positive and negative parts, respectively.

Expanding the expression for \(\Amat^+\) yields
\begin{align}
\Amat^+
&=
\begin{bmatrix}
    \flexmat^{\frac{1}{2}} \bm{\mathcal{X}}^\top &
    \flexmat^{\frac{1}{2}} \bm{\mathcal{X}}^\top \\[0.2em]
    \flexmat^{-\frac{1}{2}} \bm{\mathcal{X}}^\top \Lambda_\psi^{-1} &
    -\flexmat^{-\frac{1}{2}} \bm{\mathcal{X}}^\top \Lambda_\psi^{-1}
\end{bmatrix}
\begin{bmatrix}
    \zeromat{6}{6} & \zeromat{6}{6} \\
    \zeromat{6}{6} & \Lambda_\psi
\end{bmatrix}
\bm{T}^{-1} \\[1em]
&=
\frac{1}{2}
\begin{bmatrix}
\flexmat^{\frac{1}{2}}\bm{\mathcal{X}}^\top \Lambda_\psi\bm{\mathcal{X}}\flexmat^{-\frac{1}{2}}
&
-\flexmat^{\frac{1}{2}}\bm{\mathcal{X}}^\top \Lambda_\psi^2\bm{\mathcal{X}}\flexmat^{\frac{1}{2}} \\[0.5em]
-\flexmat^{-\frac{1}{2}}\bm{\mathcal{X}}^\top\bm{\mathcal{X}}\flexmat^{-\frac{1}{2}}
&
\flexmat^{-\frac{1}{2}}\bm{\mathcal{X}}^\top \Lambda_\psi\bm{\mathcal{X}}\flexmat^{\frac{1}{2}}
\end{bmatrix}.
\end{align}

Using \(\bm{\mathcal{X}}^\top \bm{\mathcal{X}} = \mathbf{I}\) and the identity \eqref{eq:psi}, this simplifies to
\begin{equation}
\Amat^+ =
\frac{1}{2}
\begin{bmatrix}
\flexmat^{\frac{1}{2}}\Psimat^{\frac{1}{2}}\flexmat^{-\frac{1}{2}} & -\massmat^{-1} \\
-\flexmat^{-1} & \flexmat^{-\frac{1}{2}}\Psimat^{\frac{1}{2}}\flexmat^{\frac{1}{2}}
\end{bmatrix}.
\end{equation}

Similarly, the negative characteristic flux matrix is given by
\begin{equation}
\Amat^- =
\frac{1}{2}
\begin{bmatrix}
-\flexmat^{\frac{1}{2}}\Psimat^{\frac{1}{2}}\flexmat^{-\frac{1}{2}} & -\massmat^{-1} \\
-\flexmat^{-1} & -\flexmat^{-\frac{1}{2}}\Psimat^{\frac{1}{2}}\flexmat^{\frac{1}{2}}
\end{bmatrix}.
\end{equation}

By construction, these matrices satisfy the consistency relation
\begin{equation}
\Amat = \Amat^+ + \Amat^-,
\end{equation}
which reflects the exact decomposition of the matrix \(\Amat\) into right- and left-propagating characteristic contributions.

\medskip
We now examine the weighted characteristic flux matrices.
For \(\Amat^+\), we obtain
\begin{align}
\Gammamat\Amat^+ &= \begin{bmatrix}
    \massmat & \zeromat{6}{6} \\
    \zeromat{6}{6} & \flexmat
\end{bmatrix} \cdot \frac{1}{2} \begin{bmatrix}
    \flexmat^\frac{1}{2} \Psimat^\frac{1}{2} \flexmat^{-\frac{1}{2}} & -\massmat^{-1} \\
    -\flexmat^{-1} & \flexmat^{-\frac{1}{2}} \Psimat^\frac{1}{2} \flexmat^\frac{1}{2}
\end{bmatrix} = \frac{1}{2} \begin{bmatrix}
    \flexmat^{-\frac{1}{2}} \Psimat^{-\frac{1}{2}} \flexmat^{-\frac{1}{2}} & -\idmat{6}{6} \\
    -\idmat{6}{6} & \flexmat^\frac{1}{2} \Psimat^\frac{1}{2} \flexmat^\frac{1}{2}
\end{bmatrix},
\end{align}
while for \(\Amat^-\) we analogously obtain
\begin{align}
\Gammamat\Amat^- &= \begin{bmatrix}
    \massmat & \zeromat{6}{6} \\
    \zeromat{6}{6} & \flexmat
\end{bmatrix} \cdot \frac{1}{2} \begin{bmatrix}
    -\flexmat^\frac{1}{2} \Psimat^\frac{1}{2} \flexmat^{-\frac{1}{2}} & -\massmat^{-1} \\
    -\flexmat^{-1} & -\flexmat^{-\frac{1}{2}} \Psimat^\frac{1}{2} \flexmat^\frac{1}{2}
\end{bmatrix}= \frac{1}{2}\begin{bmatrix}
    -\flexmat^{-\frac{1}{2}} \Psimat^{-\frac{1}{2}} \flexmat^{-\frac{1}{2}} & -\idmat{6}{6} \\
    -\idmat{6}{6} & -\flexmat^\frac{1}{2} \Psimat^\frac{1}{2} \flexmat^\frac{1}{2}
\end{bmatrix}.
\end{align}

\subsection{Absolute Flux Matrix}

The absolute value of the matrix \(\Amat\) is defined via its spectral decomposition as
\begin{equation}
|\Amat| := \bm{T}\,|\mathbf{\Lambda}|\,\bm{T}^{-1},
\end{equation}
where \(|\mathbf{\Lambda}|\) is the diagonal matrix obtained by replacing each eigenvalue of \(\mathbf{\Lambda}\) with its absolute value.

The positive and negative spectral parts satisfy
\begin{equation}
\Amat^+=\tfrac12(\Amat+|\Amat|),
\qquad
\Amat^-=\tfrac12(\Amat-|\Amat|).
\end{equation}
Consequently, \( |\Amat|=\Amat^+-\Amat^- \), and hence
$\Gammamat|\Amat| = \Gammamat\Amat^+ - \Gammamat\Amat^-$.
Substituting the expressions derived in the previous subsection, we obtain
\begin{equation}
\Gammamat|\Amat| =
\begin{bmatrix}
\flexmat^{-\frac{1}{2}}\Psimat^{-\frac{1}{2}}\flexmat^{-\frac{1}{2}} & \zeromat{6}{6} \\
\zeromat{6}{6} & \flexmat^{\frac{1}{2}}\Psimat^{\frac{1}{2}}\flexmat^{\frac{1}{2}}
\end{bmatrix}.
\end{equation}

Both diagonal blocks in this expression are symmetric positive definite.
Consequently, the weighted absolute-flux matrix
\(\Gammamat|\Amat|\) is symmetric positive definite. This property is central
to the construction of energy-stable numerical fluxes and the ensuing
discrete stability analysis.

\section{Boundary Contribution Analysis}
\label{app:boundary_contribution_analysis}

In this section, we analyze the stability of the boundary treatment by
evaluating the energy contribution
\((\widehat{\Pimat\uh}-\tfrac12\Pimat\uh)^\top\uh\) at the domain boundaries.
Consistent with Section~\ref{subsec:discrete_energy_stability}, we use the
fully upwind characteristic flux \(\sigma=1\), for which
\(\widehat{\Pimat\uh}
=\Gammamat\Amat^+\uh^-+\Gammamat\Amat^-\uh^+\), together with the homogeneous
ghost states~\eqref{eq:homogeneous_outer_states}. For inhomogeneous data, the
computation produces the additional terms recorded in
Remark~\ref{rem:inhomogeneous_bc_data}.

\subsection{\texorpdfstring{Left Boundary ($x=0$)}{Left Boundary (x=0)}}
At the left boundary, the stability term expands as:
\begingroup
\allowdisplaybreaks
\begin{align}
\left( \widehat{\Pimat\uh} - \tfrac{1}{2}\Pimat \uh \right)^{\!\top}\uh \Big|_{0}
&=
\Bigl(\bigl(\Gammamat\Amat^{+}\bigr)\uh^{-}
      +\bigl(\Gammamat\Amat^{-}\bigr)\uh^{+}
      -\tfrac{1}{2}\Pimat\uh^{+}\Bigr)^{\!\top}\uh^{+}.
\label{eq:left_boundary_start}
\end{align}
We now evaluate each term in the sum individually. 
\begin{align}
\bigl((\Gammamat\Amat^{+})\uh^{-}\bigr)^{\!\top}\uh^{+}
&=
\left(
\frac{1}{2}
\begin{bmatrix}
\flexmat^{-\frac{1}{2}}\Psimat^{-\frac{1}{2}}\flexmat^{-\frac{1}{2}}
    & -\idmat{6}{6}\\[4pt]
-\idmat{6}{6}
    & \flexmat^{\frac{1}{2}}\Psimat^{\frac{1}{2}}\flexmat^{\frac{1}{2}}
\end{bmatrix}
\begin{bmatrix}
\zerovec{6}\\[4pt]
\utwoh^{+}
+
\flexmat^{-\frac{1}{2}}\Psimat^{-\frac{1}{2}}\flexmat^{-\frac{1}{2}}\uoneh^{+}
\end{bmatrix}
\right)^{\!\top}
\begin{bmatrix}
\uoneh^{+}\\[6pt]
\utwoh^{+}
\end{bmatrix}
\nonumber \\[6pt]
&=
-\tfrac{1}{2}\bigl(\utwoh^{+} + \flexmat^{-\frac{1}{2}}\Psimat^{-\frac{1}{2}}\flexmat^{-\frac{1}{2}}\uoneh^{+}\bigr)^{\!\top}\uoneh^{+} +
\tfrac{1}{2}\bigl(\flexmat^{\frac{1}{2}}\Psimat^{\frac{1}{2}}\flexmat^{\frac{1}{2}}
    \bigl(\utwoh^{+} + \flexmat^{-\frac{1}{2}}\Psimat^{-\frac{1}{2}}\flexmat^{-\frac{1}{2}}\uoneh^{+}\bigr)\bigr)^{\!\top}\utwoh^{+}.
\label{eq:g_plus_term}
\end{align}

Next, the negative characteristic contribution is
\begin{align}
\bigl((\Gammamat\Amat^{-})\uh^{+}\bigr)^{\!\top}\uh^{+}
&=
\left(
\frac{1}{2}
\begin{bmatrix}
-\,\flexmat^{-\frac{1}{2}}\Psimat^{-\frac{1}{2}}\flexmat^{-\frac{1}{2}}
    & -\idmat{6}{6}\\[4pt]
- \idmat{6}{6}
    & -\,\flexmat^{\frac{1}{2}}\Psimat^{\frac{1}{2}}\flexmat^{\frac{1}{2}}
\end{bmatrix}
\begin{bmatrix}
\uoneh^{+}\\[6pt]
\utwoh^{+}
\end{bmatrix}
\right)^{\!\top}
\begin{bmatrix}
\uoneh^{+}\\[6pt]
\utwoh^{+}
\end{bmatrix}
\nonumber \\[6pt]
&=
-\tfrac{1}{2}\bigl(\flexmat^{-\frac{1}{2}}\Psimat^{-\frac{1}{2}}\flexmat^{-\frac{1}{2}}\uoneh^{+} + \utwoh^{+}\bigr)^{\!\top}\uoneh^{+} \nonumber \\
&\quad \;-\;
\tfrac{1}{2}\bigl(\uoneh^{+} + \flexmat^{\frac{1}{2}}\Psimat^{\frac{1}{2}}\flexmat^{\frac{1}{2}}\utwoh^{+}\bigr)^{\!\top}\utwoh^{+}.
\label{eq:g_minus_term}
\end{align}
The subtracted physical-flux contribution is
\begin{equation}
\Bigl(\tfrac{1}{2}\Pimat\uh^{+}\Bigr)^{\!\top}\uh^{+}
= -\,\uoneh^{+}{}^{\!\top}\utwoh^{+}.
\label{eq:pimat_term}
\end{equation}
Substituting \eqref{eq:g_plus_term}--\eqref{eq:pimat_term} into
\eqref{eq:left_boundary_start} gives the left-boundary contraction
\begin{align}
\MoveEqLeft
\Bigl(\bigl(\Gammamat\Amat^{+}\bigr)\uh^{-}
      +\bigl(\Gammamat\Amat^{-}\bigr)\uh^{+}
      -\tfrac{1}{2}\Pimat\uh^{+}\Bigr)^{\!\top}\uh^{+}
\nonumber \\[6pt]
&=
- \bigl(\uoneh^{+}\bigr)^{\!\top}
    \bigl(\flexmat^{-\frac{1}{2}}\Psimat^{-\frac{1}{2}}\flexmat^{-\frac{1}{2}}\bigr)\uoneh^{+}.
\label{eq:left_boundary_final}
\end{align} 
\endgroup

\subsection{\texorpdfstring{Right Boundary ($x=\ell$)}{Right Boundary (x=l)}}
Similarly, at the right boundary, we evaluate:
\begingroup
\allowdisplaybreaks
\begin{align}
\left( \widehat{\Pimat\uh} - \tfrac{1}{2}\Pimat \uh \right)^{\!\top}\uh \Big|_{\ell}
&=
\Bigl(\bigl(\Gammamat\Amat^{+}\bigr)\uh^{-}
      +\bigl(\Gammamat\Amat^{-}\bigr)\uh^{+}
      -\tfrac{1}{2}\Pimat\uh^{-}\Bigr)^{\!\top}\uh^{-}.
\label{eq:right_boundary_start}
\end{align}
The positive characteristic contribution is
\begin{align}
\bigl((\Gammamat\Amat^{+})\uh^{-}\bigr)^{\!\top}\uh^{-}
&=
\left(
\frac{1}{2}
\begin{bmatrix}
\flexmat^{-\frac{1}{2}}\Psimat^{-\frac{1}{2}}\flexmat^{-\frac{1}{2}}
    & -\idmat{6}{6}\\[4pt]
-\idmat{6}{6}
    & \flexmat^{\frac{1}{2}}\Psimat^{\frac{1}{2}}\flexmat^{\frac{1}{2}}
\end{bmatrix}
\begin{bmatrix}
\uoneh^{-}\\[6pt]
\utwoh^{-}
\end{bmatrix}
\right)^{\!\top}
\begin{bmatrix}
\uoneh^{-}\\[6pt]
\utwoh^{-}
\end{bmatrix}
\nonumber \\[6pt]
&=
\tfrac{1}{2}\bigl(\flexmat^{-\frac{1}{2}}\Psimat^{-\frac{1}{2}}\flexmat^{-\frac{1}{2}}\uoneh^{-}-\utwoh^{-}\bigr)^{\!\top}\uoneh^{-} \nonumber \\
&\quad \;+\;
\tfrac{1}{2}\bigl(-\uoneh^{-}+\flexmat^{\frac{1}{2}}\Psimat^{\frac{1}{2}}\flexmat^{\frac{1}{2}}\utwoh^{-}\bigr)^{\!\top}\utwoh^{-}.
\label{eq:g_plus_right}
\end{align}
After substituting the homogeneous right ghost state, the negative
characteristic contribution is
\begin{align}
\bigl((\Gammamat\Amat^{-})\uh^{+}\bigr)^{\!\top}\uh^{-}
&=
\left(
\frac{1}{2}
\begin{bmatrix}
-\,\flexmat^{-\frac{1}{2}}\Psimat^{-\frac{1}{2}}\flexmat^{-\frac{1}{2}}
    & -\idmat{6}{6}\\[4pt]
- \idmat{6}{6}
    & -\,\flexmat^{\frac{1}{2}}\Psimat^{\frac{1}{2}}\flexmat^{\frac{1}{2}}
\end{bmatrix}
\begin{bmatrix}
\uoneh^{-}-\flexmat^{\frac{1}{2}}\Psimat^{\frac{1}{2}}\flexmat^{\frac{1}{2}}\utwoh^{-}\\[6pt]
\zerovec{6}
\end{bmatrix}
\right)^{\!\top}
\begin{bmatrix}
\uoneh^{-}\\[6pt]
\utwoh^{-}
\end{bmatrix}
\nonumber \\[6pt]
&=
-\tfrac{1}{2}\bigl(\flexmat^{-\frac{1}{2}}\Psimat^{-\frac{1}{2}}\flexmat^{-\frac{1}{2}}
    (\uoneh^{-}-\flexmat^{\frac{1}{2}}\Psimat^{\frac{1}{2}}\flexmat^{\frac{1}{2}}\utwoh^{-})\bigr)^{\!\top}\uoneh^{-} \nonumber \\
&\quad \;-\;
\tfrac{1}{2}\bigl(\uoneh^{-}-\flexmat^{\frac{1}{2}}\Psimat^{\frac{1}{2}}\flexmat^{\frac{1}{2}}\utwoh^{-}\bigr)^{\!\top}\utwoh^{-}.
\label{eq:g_minus_right}
\end{align}
The subtracted physical-flux contribution at the right boundary is
\begin{equation}
\Bigl(\tfrac{1}{2}\Pimat\uh^{-}\Bigr)^{\!\top}\uh^{-}
= -\,\uoneh^{-}{}^{\!\top}\utwoh^{-}.
\label{eq:pimat_right}
\end{equation}
Combining these expressions, we obtain the final quadratic form for the right boundary contribution:
\begin{align}
\MoveEqLeft
\Bigl(\bigl(\Gammamat\Amat^{+}\bigr)\uh^{-}
      +\bigl(\Gammamat\Amat^{-}\bigr)\uh^{+}
      -\tfrac{1}{2}\Pimat\uh^{-}\Bigr)^{\!\top}\uh^{-}
\nonumber \\[6pt]
&=
(\utwoh^{-})^{\!\top}\bigl(\flexmat^{\frac{1}{2}}\Psimat^{\frac{1}{2}}\flexmat^{\frac{1}{2}}\bigr)\utwoh^{-}.
\label{eq:right_boundary_final}
\end{align}
\endgroup

Combining \eqref{eq:left_boundary_final} and
\eqref{eq:right_boundary_final} with the right-minus-left orientation of the
boundary operator gives
\begin{equation}
\label{eq:total_boundary_contraction}
\externalboundary{\widehat{\Pimat\uh}-\tfrac12\Pimat\uh}{\uh}
=
(\uoneh^{+})^\top\bm S_0\uoneh^{+}\Big|_{x=0}
+
(\utwoh^{-})^\top\bm S_\ell\utwoh^{-}\Big|_{x=\ell}
=B(t)\ge0.
\end{equation}
Thus the characteristic boundary closure contributes nonnegative dissipation
to the semi-discrete energy identity.

\section{Tabulated Convergence Data}
\label{app:tabulated_convergence}

Tables~\ref{tab:mms_convergence_k1}--\ref{tab:mms_convergence_k3} list the average $L^2$ errors
underlying Figure~\ref{fig:l2_error_plots} together with the experimental orders of convergence,
$\mathrm{EOC}=\log_2\bigl(e(N_c)/e(2N_c)\bigr)$, computed from the tabulated values.

\begin{table}[htbp]
\centering\small
\caption{Convergence data for polynomial degree $k=1$ (average $L^2$ errors as in Figure~\ref{fig:l2_error_plots}; characteristic-upwind hyperbolic flux; EOC computed from the tabulated values).}
\label{tab:mms_convergence_k1}
\begin{tabular}{r cc cc @{\hspace{2em}} cc cc}
\toprule
 & \multicolumn{4}{c}{alternating LDG} & \multicolumn{4}{c}{central BR1} \\
\cmidrule(lr){2-5}\cmidrule(lr){6-9}
$N_c$ & \multicolumn{2}{c}{$\bm u_1$} & \multicolumn{2}{c}{$\bm u_2$} & \multicolumn{2}{c}{$\bm u_1$} & \multicolumn{2}{c}{$\bm u_2$} \\
\cmidrule(lr){2-3}\cmidrule(lr){4-5}\cmidrule(lr){6-7}\cmidrule(lr){8-9}
 & error & EOC & error & EOC & error & EOC & error & EOC \\
\midrule
4 & $9.19\times 10^{-2}$ & --- & $7.56\times 10^{-2}$ & --- & $9.51\times 10^{-2}$ & --- & $7.54\times 10^{-2}$ & --- \\
8 & $2.41\times 10^{-2}$ & 1.93 & $2.39\times 10^{-2}$ & 1.66 & $2.63\times 10^{-2}$ & 1.85 & $2.39\times 10^{-2}$ & 1.66 \\
16 & $5.45\times 10^{-3}$ & 2.15 & $6.84\times 10^{-3}$ & 1.80 & $6.30\times 10^{-3}$ & 2.06 & $6.87\times 10^{-3}$ & 1.80 \\
32 & $1.22\times 10^{-3}$ & 2.16 & $1.82\times 10^{-3}$ & 1.91 & $1.45\times 10^{-3}$ & 2.12 & $1.83\times 10^{-3}$ & 1.91 \\
64 & $3.20\times 10^{-4}$ & 1.93 & $4.70\times 10^{-4}$ & 1.95 & $3.43\times 10^{-4}$ & 2.08 & $4.72\times 10^{-4}$ & 1.96 \\
128 & $9.01\times 10^{-5}$ & 1.83 & $1.20\times 10^{-4}$ & 1.98 & $8.33\times 10^{-5}$ & 2.04 & $1.20\times 10^{-4}$ & 1.98 \\
\bottomrule
\end{tabular}
\end{table}

\begin{table}[htbp]
\centering\small
\caption{Convergence data for polynomial degree $k=2$ (average $L^2$ errors as in Figure~\ref{fig:l2_error_plots}; characteristic-upwind hyperbolic flux; EOC computed from the tabulated values).}
\label{tab:mms_convergence_k2}
\begin{tabular}{r cc cc @{\hspace{2em}} cc cc}
\toprule
 & \multicolumn{4}{c}{alternating LDG} & \multicolumn{4}{c}{central BR1} \\
\cmidrule(lr){2-5}\cmidrule(lr){6-9}
$N_c$ & \multicolumn{2}{c}{$\bm u_1$} & \multicolumn{2}{c}{$\bm u_2$} & \multicolumn{2}{c}{$\bm u_1$} & \multicolumn{2}{c}{$\bm u_2$} \\
\cmidrule(lr){2-3}\cmidrule(lr){4-5}\cmidrule(lr){6-7}\cmidrule(lr){8-9}
 & error & EOC & error & EOC & error & EOC & error & EOC \\
\midrule
4 & $3.49\times 10^{-3}$ & --- & $2.99\times 10^{-3}$ & --- & $3.85\times 10^{-3}$ & --- & $3.16\times 10^{-3}$ & --- \\
8 & $3.57\times 10^{-4}$ & 3.29 & $3.97\times 10^{-4}$ & 2.91 & $4.12\times 10^{-4}$ & 3.23 & $4.14\times 10^{-4}$ & 2.93 \\
16 & $2.72\times 10^{-5}$ & 3.71 & $4.08\times 10^{-5}$ & 3.28 & $3.11\times 10^{-5}$ & 3.73 & $3.41\times 10^{-5}$ & 3.60 \\
32 & $2.58\times 10^{-6}$ & 3.40 & $5.47\times 10^{-6}$ & 2.90 & $2.47\times 10^{-6}$ & 3.65 & $3.18\times 10^{-6}$ & 3.42 \\
64 & $3.37\times 10^{-7}$ & 2.94 & $7.86\times 10^{-7}$ & 2.80 & $2.71\times 10^{-7}$ & 3.19 & $3.71\times 10^{-7}$ & 3.10 \\
128 & $4.59\times 10^{-8}$ & 2.88 & $1.08\times 10^{-7}$ & 2.86 & $3.35\times 10^{-8}$ & 3.02 & $4.63\times 10^{-8}$ & 3.00 \\
\bottomrule
\end{tabular}
\end{table}

\begin{table}[htbp]
\centering\small
\caption{Convergence data for polynomial degree $k=3$ (average $L^2$ errors as in Figure~\ref{fig:l2_error_plots}; characteristic-upwind hyperbolic flux; EOC computed from the tabulated values).}
\label{tab:mms_convergence_k3}
\begin{tabular}{r cc cc @{\hspace{2em}} cc cc}
\toprule
 & \multicolumn{4}{c}{alternating LDG} & \multicolumn{4}{c}{central BR1} \\
\cmidrule(lr){2-5}\cmidrule(lr){6-9}
$N_c$ & \multicolumn{2}{c}{$\bm u_1$} & \multicolumn{2}{c}{$\bm u_2$} & \multicolumn{2}{c}{$\bm u_1$} & \multicolumn{2}{c}{$\bm u_2$} \\
\cmidrule(lr){2-3}\cmidrule(lr){4-5}\cmidrule(lr){6-7}\cmidrule(lr){8-9}
 & error & EOC & error & EOC & error & EOC & error & EOC \\
\midrule
4 & $6.32\times 10^{-5}$ & --- & $5.05\times 10^{-5}$ & --- & $6.73\times 10^{-5}$ & --- & $5.36\times 10^{-5}$ & --- \\
8 & $2.32\times 10^{-6}$ & 4.77 & $3.04\times 10^{-6}$ & 4.06 & $2.89\times 10^{-6}$ & 4.54 & $3.54\times 10^{-6}$ & 3.92 \\
16 & $8.08\times 10^{-8}$ & 4.84 & $1.43\times 10^{-7}$ & 4.41 & $1.17\times 10^{-7}$ & 4.63 & $2.09\times 10^{-7}$ & 4.09 \\
32 & $4.64\times 10^{-9}$ & 4.12 & $8.90\times 10^{-9}$ & 4.01 & $5.93\times 10^{-9}$ & 4.30 & $1.88\times 10^{-8}$ & 3.47 \\
64 & $3.13\times 10^{-10}$ & 3.89 & $5.74\times 10^{-10}$ & 3.95 & $3.43\times 10^{-10}$ & 4.11 & $2.02\times 10^{-9}$ & 3.21 \\
128 & $2.07\times 10^{-11}$ & 3.92 & $3.69\times 10^{-11}$ & 3.96 & $2.09\times 10^{-11}$ & 4.04 & $2.38\times 10^{-10}$ & 3.09 \\
\bottomrule
\end{tabular}
\end{table}

\FloatBarrier
\section*{Declaration of generative AI and AI-assisted technologies in the manuscript preparation process}
During the preparation of this manuscript, the authors used OpenAI ChatGPT for language editing and to improve the clarity and readability of the text. 
The authors reviewed and edited the output as needed and take full responsibility for the content of the manuscript.

\bibliographystyle{elsarticle-num}
\bibliography{references}

\end{document}